\numberwithin{equation}{section}
{\theoremstyle{definition}
\newtheorem{defn}{Definition}[section]}
\newtheorem{theorem}{Theorem}[section]
\newtheorem{proposition}[theorem]{Proposition}
\newtheorem{corollary}[theorem]{Corollary}
\newtheorem{lemma}[theorem]{Lemma}
\theoremstyle{definition}
{
\newtheorem{remark}[theorem]{Remark}

}}
\newcommand{\cal}{\mathcal}
\newcommand{\BB}{{\cal B}}
\newcommand{\II}{{\cal I}}
\newcommand{\PP}{{\cal P}}
\newcommand{\RR}{{\cal R}}
\newcommand{\Nn}{{\mathbb{N}}}
\newcommand{\Pp}{{\mathbb{P}}}
\newcommand{\Rr}{{\mathbb{R}}}
\newcommand{\Zz}{{\mathbb{Z}}}
\newcommand{\Q}{{\mathbb{Q}}}
\newcommand{\Z}{{\mathbb{Z}}}
\def\diag{\operatorname{diag}}
\def\sgn{\operatorname{sgn}}
\newcommand{\conj} {\overline}
\newcommand{\id}  {\operatorname{id}}
\newcommand{\te}[1]{\quad\text{#1}\quad}
\newcommand{\comment}[1]{}
\setlist{nolistsep}
\begin{document}
\title[SRB measures for hyperbolic polygonal billiards]{SRB measures for hyperbolic polygonal billiards}
\date{\today}

\author[Del Magno]{Gianluigi Del Magno}
\address{CEMAPRE, ISEG\\
Universidade T\'ecnica de Lisboa\\
Rua do Quelhas 6, 1200-781 Lisboa, Portugal}
\email{delmagno@iseg.utl.pt}

\author[Lopes Dias]{Jo\~ao Lopes Dias}
\address{Departamento de Matem\'atica and CEMAPRE, ISEG\\
Universidade T\'ecnica de Lisboa\\
Rua do Quelhas 6, 1200-781 Lisboa, Portugal}
\email{jldias@iseg.utl.pt}

\author[Duarte]{Pedro Duarte}
\address{Departamento de Matem\'atica and CMAF \\
Faculdade de Ci\^encias\\
Universidade de Lisboa\\
Campo Grande, Edif\'icio C6, Piso 2\\
1749-016 Lisboa, Portugal}
\email{pduarte@ptmat.fc.ul.pt}

\author[Gaiv\~ao]{Jos\'e Pedro Gaiv\~ao}
\address{CEMAPRE, ISEG\\
Universidade T\'ecnica de Lisboa\\
Rua do Quelhas 6, 1200-781 Lisboa, Portugal}
\email{jpgaivao@iseg.utl.pt}

\author[Pinheiro]{Diogo Pinheiro}
\address{CEMAPRE, ISEG\\
Universidade T\'ecnica de Lisboa\\
Rua do Quelhas 6, 1200-781 Lisboa, Portugal}
\email{dpinheiro@iseg.utl.pt}

\begin{abstract}
We prove that polygonal billiards with contracting reflection laws exhibit hyperbolic attractors with countably many ergodic SRB measures. These measures are robust under small perturbations of the reflection law, and the tables for which they exist form a generic set in the space of all polygons. Specific polygonal tables are studied in detail.
\end{abstract}

\maketitle


\section{Introduction}

A planar billiard is the dynamical system describing the motion of a point particle moving freely in the interior of a connected compact subset $P\subset\Rr^2$ with piecewise smooth boundary. The particle slides along straight lines until it hits the boundary $\partial P$. If the collision occurs at a smooth boundary point, then the particle gets reflected according to a prescribed rule, called the reflection law. Otherwise, the particle stops and stays forever at the non-smooth boundary point where the collision occurred. The billiard dynamics is not smooth: the singularities of the dynamics are generated by non-smooth boundary points and trajectories having tangential collisions with $ \partial P $. The reflection law most commonly considered is the specular one: the tangential component of the particle velocity remains the same, while the normal component changes its sign. Nevertheless, there exist  alternative reflection laws that are equally reasonable and have interesting mathematical implications. In this paper, we study a family of these alternative laws.

One of the interesting features of billiards is that they exhibit a broad spectrum of dynamics.
On one end of the spectrum, there are the integrable billiards. The only known integrable billiard tables are ellipses, rectangles, equilateral triangles, right isosceles triangles and right triangles with an angle $ \pi/6 $. Polygonal billiards are close to integrable systems in the sense that all their Lyapunov exponents are zero, which is a direct consequence of having zero topological entropy \cite{K87}. In particular, polygonal billiards are never hyperbolic. For some nice reviews on polygonal billiards, see \cite{G96,tabachnikov95,S00}. On the other end of the spectrum, there are the hyperbolic billiards, i.e. billiards with non-zero Lyapunov exponents. The Sinai billiards were the first examples of hyperbolic billiards; their tables consist of tori with a finite number of convex obstacles \cite{Sinai70}. Another important class of hyperbolic billiards is represented by the semi-focusing billiards, so called because their tables are formed by convex (outward) curves and possibly straight segments. The most famous examples are the Bunimovich stadium and the Wojtkowski cardioid \cite{bunimovich74,bunimovich92,W86}. All the three examples of hyperbolic billiards mentioned above have strong ergodic properties; they are Bernoulli systems, and in particular ergodic and K-mixing. Other semi-focusing hyperbolic billiards were discovered by Markarian and Donnay \cite{D91,M88}. Examples of hyperbolic billiards of great interest in statistical mechanics are the Lorentz gases and hard-ball systems \cite{sz00}. For a recent and rather complete account on the theory of hyperbolic billiards, we refer the reader to \cite{cm06}.

In this paper, we are interested in planar polygonal billiards with contracting reflection laws. This means that the function describing the dependence of the reflection angle on the incidence angle is a contraction. The simplest example of such a law is the linear contraction $\theta \mapsto \sigma \theta $ with $0 < \sigma < 1$, already studied in \cite{arroyo09,arroyo12,MDDGP12,markarian10}. To distinguish polygonal billiards with contracting reflection laws from those with specular reflection law, we will refer to the former as contracting polygonal billiards and to the latter as standard polygonal billiards.

The dynamics of these two classes of billiards are very different. Standard polygonal billiards exhibit the characteristic features of parabolic systems: zero entropy, subexponential growth of periodic orbits and subexponential divergence of nearby orbits. Instead, as we show in this paper, contracting polygonal billiards are typically uniformly hyperbolic systems (with singularities), and so they have positive entropy, exponential divergence of orbits, dense periodic orbits, and countably many ergodic and mixing components. Moreover, contracting polygonal billiards may exhibit mixed behavior, with hyperbolic and non-hyperbolic attractors coexisting. Yet another essential difference between these two classes of billiards is that standard polygonal billiards preserve the phase space area, whereas the contracting ones do not, this being due to the phase space contraction produced by the new reflection law.
Therefore, an important part of the analysis of contracting polygonal billiards is represented by the study of their invariant measures. In this paper, we give sufficient conditions for the existence of Sinai-Ruelle-Bowen measures (SRB measures for short), which are measures with non-zero Lyapunov exponents and absolutely conditional measures on unstable manifolds.
Our results rely on a general theory of SRB measures for uniformly systems with singularities developed by Pesin~\cite{Pesin92} and Sataev~\cite{Sataev92}.

The first results on billiards with contracting reflection laws were obtained by Markarian, Pujals and Sambarino. In \cite{markarian10}, they proved that many billiards with contracting reflection law, including polygonal billiards, have dominated splitting, i.e. the tangent bundle of their phase space splits into two invariant directions such that the growth rate along one direction dominates uniformly the growth rate along the other direction. Note that uniform hyperbolicity implies dominated splitting. Markarian, Pujals and Sambarino also obtained some results concerning the hyperbolicity of polygonal billiards with strongly contracting reflection laws. In this paper, we improve their results. Specific polygonal tables and linear contracting reflection laws were studied in \cite{arroyo12} and \cite{MDDGP12}. In the first of these two papers, the existence of a unique SRB measure was established for the equilateral triangle using a natural Markov partition. In the second one, we proved the existence of horseshoes and several properties of hyperbolic periodic orbits for the square. Works on billiards related to ours are the one of Zhang, where she proved the existence of SRB measures and obtained Green-Kubo formulae for two-dimensional periodic Lorentz gases with reflection laws modified according to certain `twisting' rules \cite{Zhang11}, and the one of Chernov, Karepanov and Sim\'anyi on 
a gas of hard disks moving in a two-dimensional cylinder with non-elastic collisions between the disks and the walls \cite{CKS12}. 
Finally, see also \cite{Altmann08,arroyo09}, for numerical results on billiards with non-specular reflections laws relevant in the study of certain optical systems.

We now describe the organization of this paper and at the same time our results in more detail.
In Section~\ref{sec:preliminaries}, we give the definitions of the billiard map with general reflection law and of a contracting reflection law. We then discuss basic properties of the billiard map, including its singular sets, derivative and attractors. 

The general hyperbolic properties of contracting polygonal billiards are discussed in Section~\ref{sec:hyperbolicity}. There, we prove that every contracting polygonal billiard has a dominated splitting, and provide conditions for the existence of hyperbolic sets. As a corollary, we obtain that if a polygon does not have parallel sides, then the correspondent contracting billiard has hyperbolic attractors.

Sufficient conditions for the existence of SRB measures on hyperbolic attractors are given in Section~\ref{sec:SRB}. As already mentioned, the result we obtain relies on general results of Pesin and Sataev. To be able to use these results, we show that contracting polygonal billiards satisfy a crucial condition, which roughly speaking, amounts to requiring that billiard trajectories do not visit too often a small neighborhood of the vertexes of the polygon. In the same section, we also prove that the existence of an SRB measure is a robust property under small perturbation of the reflection law.

In Section~\ref{sec:Generic}, we prove that polygonal billiards with strongly contracting reflection laws have generically countably many ergodic SRB measures. We then obtain the same conclusion for every regular polygon with an odd number of sides in Section~\ref{sec:regpoly}. 
To arrive to these results, we use a `perturbation argument'. Indeed, for strongly contracting reflection laws, the billiard map is close to the one-dimensional map obtained as the limit of the billiard map when the contraction becomes infinitely large. This map is called the slap map, and was introduced in \cite{markarian10}. If a polygon does not have parallel sides, then the slap map is expanding, and so the billiard map remains hyperbolic provided that the reflection law is strongly contracting. 
In Section~\ref{sec:regpoly}, we also derive some general results about the absence of hyperbolic attractors when the billiard table is a regular polygon with an even number of sides. 

Section~\ref{sec:triangles} is devoted to the proof of the existence of SRB measures for acute triangles. The simpler dynamics of this class of polygons allows us to obtain an explicit estimate on the strength of the contraction needed for the existence of SRB measures.
Finally, in Section~\ref{sec:rectangles}, we give sufficient conditions for the existence and absence of hyperbolic attractors in rectangular billiards.

\section{Preliminaries}
\label{sec:preliminaries}

In this section, we define the mathematical objects we intend to study. First, we give a detailed construction of the billiard map for polygonal tables and general reflection laws. Then, we introduce a class of contracting reflection laws, and for such laws, we derive a series of preliminary results concerning the singular sets, the derivative and the attracting sets of polygonal billiard maps.

\subsection{Billiard map}
Let $ P $ be a non self-intersecting $ n $-gon. Denote by $ |\partial P| $ the length of $ \partial P $ and by $ C_{1},\ldots,C_{n} $ the vertexes of $ P $. Let $ \zeta \colon [0,|\partial P|] \to \Rr^{2} $ be the positively oriented parametrization  of the curve $ \partial P $ by arclength such that if $ \tilde{s}_{i} $ is the arclength parameter corresponding to $ C_{i} $, then $ 0 = \tilde{s}_{1} < \cdots < \tilde{s}_{n} < \tilde{s}_{n+1}= |\partial P| $. Note that the parametrization $ \zeta $ is smooth everywhere on $ [0,|\partial P|] $ except at $ \tilde{s}_{2},\ldots,\tilde{s}_{n} $, and that $ \zeta(0) = \zeta(|\partial P|) $, because $ \partial P $ is a closed curve.

Consider the following subsets of $ \Rr^{2} $, 
\[ 
M = (0,|\partial P|) \times \left(-\frac{\pi}{2},\frac{\pi}{2}\right)
\quad \text{and} \quad 
V = \{\tilde{s}_{1},\ldots,\tilde{s}_{n+1}\} \times \left(-\frac{\pi}{2},\frac{\pi}{2}\right).
\]
For every $ x = (s,\theta) \in M \setminus V $, define $ q(x) = \zeta(s) $, and let $ v(x) $ be the unit vector of $ \Rr^{2} $ such that the oriented angle formed by $ \zeta'(s) $ and $ v(x) $ is equal to $ \pi/2-\theta $. Thus, $ (q(x),v(x)) $ defines a unit tangent vector of $ \Rr^{2} $ pointing inside $ P $. Both pairs $ (s,\theta) $ and $ (q(x),v(x)) $ specify the state of the billiard particle immediately after a collision with $ \partial P $. For this reason, we will refer to both $ (s,\theta) $ and $ (q(x),v(x)) $ as the \emph{collisions} of the billiard particle.

Given $ x \in M \setminus V $ and $ \tau>0 $, denote by $ (q(x),q(x)+\tau v(x)) $ the open segment of $ \Rr^{2} $ with endpoints $ q(x) $ and $ q(x)+\tau v(x) $. Next, define 
\[ 
A(x) = \{\tau>0 \colon (q(x),q(x)+\tau v(x)) \subset P \}.
\]
This set is nonempty and bounded, and so we can define
\[
t(x) = \sup A(x) \quad \text{and} \quad q_{1}(x) = q(x) + t(x) v(x).
\]
It is easy to see that $ q_{1}(x) $ is the point where the particle leaving the point $ q(x) $ with velocity $ v(x) $ hits $ \partial P $, whereas $ t(x) $ is the euclidean distance between $ q(x) $ and $ q_{1}(x) $.

Let $ S^{+}_{1} $ be the closure in $ \Rr^{2} $ of all collisions mapped by $ q_{1} $ into vertexes of $ P $, i.e., 
\[ 
S^{+}_{1} = \bigcup^{n}_{i=1} \overline{q^{-1}_{1}(C_{i})}.
\] 
Also, let 
\[ 
N^{+}_{1} = \partial M \cup V \cup S^{+}_{1}.
\]

For every $ x \in M \setminus N^{+}_{1} $, define 
\[ 
s_{1}(x) = \zeta^{-1}(q_{1}(x)) 
\]
and
\[
\bar{v}_{1}(x) = -v(x) + 2\langle \zeta'(s_{1}(x)),v(x) \rangle \zeta'(s_{1}(x)),
\] 
where $ \langle \cdot\, , \cdot \rangle $ is the euclidean scalar product of $ \Rr^{2} $. The billiard particle leaving $ q(x) $ in the direction $ v(x) $ hits $ \partial P $ at $ q_{1}(x) $, and undergoes a reflection. When this reflection is specular, i.e., the reflection and the incidence angles coincide, the velocity of the particle immediately after the collision is equal to $ \bar{v}_{1}(x) $. The angle formed by $ v_{1}(x) $ with $ \zeta'(s_{1}(x)) $ is given
\[ 
\bar{\theta}_{1}(x) = \arcsin (\langle \zeta'(s_{1}(x)),\bar{v}_{1}(x)\rangle).
\]

Let $ L_{0} $ and $ L_{1} $ be the sides of $ P $ containing $ q(x) $ and $ q_{1}(x) $, respectively. Also, let $ \ell_{0} $ and $ \ell_{1} $ be the lines through the origin of $ \Rr^{2} $ parallel to $ L_{0} $ and $ L_{1} $, respectively. We assume that $ \ell_{0} $ and $ \ell_{1} $ have the same orientation of $ L_{0} $ and $ L_{1} $. Denote by $ \delta(L_{0},L_{1}) $ the smallest positive angle of the rotation that maps $ \ell_{0} $ to $ \ell_{1} $ counterclockwise. We call such an angle the \emph{angle formed by the sides $ L_{0} $ and $ L_{1} $}. It is not difficult to deduce that the relation between $ \bar{\theta}_{1}(x) $ and $ \theta(x) $ read as
\begin{equation}
\label{eq:angle-after}
\bar{\theta}_{1}(x) =  \pi - \delta(L_{0},L_{1}) - \theta(x).
\end{equation}

The billiard map $ \bar{\Phi} \colon M \setminus N^{+}_{1} \to M $ with the specular reflection law is given by 
\[
\bar{\Phi}(x) = (s_{1}(x),\bar{\theta}_{1}(x)) \qquad \text{for } x \in M \setminus N^{+}_{1}.
\]
This map is a smooth embedding \cite[Theorem~2.33]{cm06}. 

\subsection{General reflection laws} 
We now give the definition of the billiard map with general reflection law. A reflection law is given by a function $ f \colon (-\pi/2,\pi/2) \to (-\pi/2,\pi/2) $. If we define $ \theta_{1}(x) = f(\bar{\theta}_{1}(x)) $ for $ x \in M \setminus N^{+}_{1} $, then the billiard map $ \Phi_{f} \colon M \setminus N^{+}_{1} \to M $ with reflection law $ f $ (see Fig.~\ref{fi:polygon}) is given by 
\[ 
\Phi_{f}(x) = (s_{1}(x),\theta_{1}(x)) \qquad \text{for } x \in M \setminus N^{+}_{1}. 
\]
Let $ R_{f}(s,\theta)=(s,f(\theta)) $ for every $ (s,\theta) \in M $. So $ \Phi_{f} $ may be written as
\[
\Phi_{f} = R_{f} \circ \bar{\Phi}.
\]
It follows that $ \Phi_{f} $ is a $ C^{k} $ embedding with $ k \ge 1 $ provided that $ f $ is.
\begin{figure}
  \begin{center}
    \includegraphics[width=5cm]{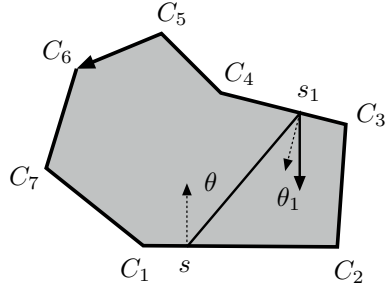}
  \end{center}
  \caption{Billiard map $ \Phi_{f} $.}
  \label{fi:polygon}
\end{figure}

\subsection{Contracting reflection laws}
Given a differentiable function $ f \colon (-\pi/2,\pi/2) \to \Rr $, let
$$
\lambda(f) = \sup_{\theta \in (-\pi/2,\pi/2)} |f'(\theta)|.
$$
Denote by $ \RR $ the set of all differentiable $ f \colon (-\pi/2,\pi/2) \to \Rr $ such that $ f(0)=0 $ and $ \lambda(f)<\infty $. For every transformation $ \Psi \colon M \setminus N^{+}_{1} \to \Rr^{2} $, define 
\[
\|\Psi\|_{\infty} = \sup_{x \in M \setminus N^{+}_{1}} \|\Psi(x)\|_{2},
\]
where $ \|\cdot\|_{2} $ is the euclidean norm of $ \Rr^{2} $. Let $ X $ be the set of all $ \Psi \colon M \setminus N^{+}_{1} \to \Rr^{2} $ such that $ \|\Psi\|_{\infty} < \infty $. The spaces $ (\RR,\lambda) $ and $ (X,\|\cdot\|_{\infty}) $ are normed space. 

Consider the map from $ \RR $ to $ X $ defined by $ f \mapsto \Phi_{f} $. This map is continuous. Indeed, if $ f_{1} $ and $ f_{2} $ belong to $ \RR $, then 
\begin{align*}
\|\Phi_{f_{1}}-\Phi_{f_{2}}\|_{\infty} & \le \|R_{f_{1}}-R_{f_{2}}\|_{\infty} \\
& \le \sup_{\theta \in (-\pi/2,\pi/2)} |f_{1}(\theta)-f_{2}(\theta)| \\
& \le \frac{\pi}{2} \lambda(f_{1}-f_{2}).
\end{align*}

For each $ k \ge 1 $, we define $\RR^k_1$ to be the set of all $ f \in \RR $ that are $ C^{k} $ embeddings with $ \lambda(f) < 1 $. If $ f \in \RR^{k}_{1} $, then $ f $ is a strict contraction, and $ \theta=0 $ is its unique fixed point. Since $ f $ is monotone, it admits a continuous extension up to $ -\pi/2 $ and $ \pi/2 $, which is naturally denoted by $ f(-\pi/2) $ and $ f(\pi/2) $. 

Strictly increasing reflection laws $ f \in \RR^{k}_{1} $ were also considered in \cite{markarian10}. The simplest example of such a law is $ f(\theta) = \sigma \theta $ with $ 0 < \sigma < 1 $ \cite{arroyo09,MDDGP12,markarian10}. See Fig.~\ref{fi:reflection}.

From now on, we assume that $f\in\RR^k_1$ with $ k \ge 1 $ unless otherwise stated.

\begin{figure}
  \begin{center}
    \includegraphics[width=4cm]{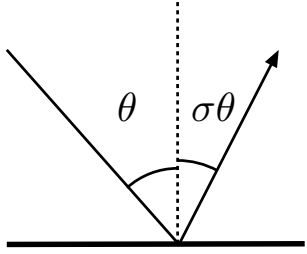}
  \end{center}
  \caption{Contracting reflection law}
  \label{fi:reflection}
\end{figure}

\subsection{Singular sets}
	A curve $ \gamma $ contained in $ M $ is called \emph{strictly decreasing} (resp. \emph{strictly increasing}), if $ \gamma $ is the graph of a continuous strictly decreasing (resp. \emph{strictly increasing}) function $ h \colon I \to (-\pi/2,\pi/2) $   with $ I $ being an interval of $ (0,|\partial Q|) $. 

In the next two propositions, we state the main geometrical properties of the sets $ S^{+}_{1} $ and $ S^{-}_{1} $. Their proofs are given in the Appendix.

\begin{proposition}
	\label{pr:prop-sing}
	The set $ S^{+}_{1} $ is a union of finitely many analytic compact curves $ \Gamma_{1},\ldots,\Gamma_{m} $ such that 
	\begin{enumerate} 
		\item $ \Gamma_{i} \subset [\tilde{s}_{k},\tilde{s}_{k+1}] \times (-\pi/2,\pi/2) $ for some $ 1\le k \le n $,
		\item $ \Gamma_{i} $ is strictly decreasing,
		\item $ \Gamma_{i} $ and $ \Gamma_{j} $ intersect transversally for $ i \neq j $,
		\item $ \Gamma_{i} \cap \Gamma_{j} \subset \partial \Gamma_{i} \cup \partial \Gamma_{j} $ for $ i \neq j $.
	\end{enumerate}	
\end{proposition}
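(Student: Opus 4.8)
\emph{Sketch of proof.} The plan is to write $S^{+}_{1}$ as a finite union of graphs of explicit \emph{aiming functions} and then read the four properties off those formulas.

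Fix a side $L_{k}$ of $P$, so that $\zeta(s)$ with $s\in[\tilde s_{k},\tilde s_{k+1}]$ parametrizes $L_{k}$ with unit tangent $e_{k}=\zeta'(s)$, and extend $\zeta$ affinely along this line to all of $\Rr$. Fix a vertex $C_{i}$ not lying on this line. A collision $(s,\theta)$ with $s\in(\tilde s_{k},\tilde s_{k+1})$ is ``aimed at $C_{i}$'' when $v(s,\theta)$ is the unit vector along $C_{i}-\zeta(s)$; since the oriented angle from $e_{k}$ to $v(s,\theta)$ is $\pi/2-\theta$, this happens exactly on the graph $\theta=\phi_{k,i}(s)$ of
\[
\phi_{k,i}(s)=\frac{\pi}{2}+\arg e_{k}-\arg\bigl(C_{i}-\zeta(s)\bigr).
\]
Because $C_{i}$ lies strictly off the line of $L_{k}$ and on its interior side, $\arg(C_{i}-\zeta(s))$ stays in $(\arg e_{k},\arg e_{k}+\pi)$; hence $\phi_{k,i}$ is real-analytic on $\Rr$ with values in $(-\pi/2,\pi/2)$. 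Using $\tfrac{d}{ds}(C_{i}-\zeta(s))=-e_{k}$,
\[
\phi_{k,i}'(s)=-\frac{\det\bigl(e_{k},C_{i}-\zeta(s)\bigr)}{\norm{C_{i}-\zeta(s)}^{2}}=-\frac{d_{k,i}}{\norm{C_{i}-\zeta(s)}^{2}}<0,\qquad d_{k,i}:=\dist(C_{i},\text{line of }L_{k})>0,
\]
so each $\phi_{k,i}$ is strictly decreasing.

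Since $\Phi_{f}$ and $\bar\Phi$ have the same position map $q_{1}$, the part of $q_{1}^{-1}(C_{i})$ with first coordinate in $(\tilde s_{k},\tilde s_{k+1})$ is precisely the graph of $\phi_{k,i}$ over
\[
J_{k,i}=\bigl\{\,s\in(\tilde s_{k},\tilde s_{k+1})\ :\ (\zeta(s),C_{i})\subset\Int P\,\bigr\},
\]
because aiming at $C_{i}$ forces $\theta=\phi_{k,i}(s)$, and the first collision is at $C_{i}$ exactly when the open segment $(\zeta(s),C_{i})$ avoids $\partial P$. This set is open, and its frontier inside $[\tilde s_{k},\tilde s_{k+1}]$ is contained in the \emph{finite} set of parameters $s$ for which $\zeta(s)$, $C_{i}$ and some other vertex of $P$ are collinear; hence $J_{k,i}$ has finitely many components. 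Since finite unions commute with closure, and the closure of the graph of a continuous map over a set is its graph over the closure, $S^{+}_{1}$ is the union over all $k,i$ of the graphs of $\phi_{k,i}$ over the (finitely many, closed) connected components of $\overline{J_{k,i}}$. Taking these graphs as $\Gamma_{1},\dots,\Gamma_{m}$, properties (1) and (2) are immediate from the previous paragraph.

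For (3)--(4), let $\Gamma_{p}\neq\Gamma_{q}$ be graphs of $\phi_{k,i}$ over $I_{p}$ and of $\phi_{k',i'}$ over $I_{q}$. If $(k,i)=(k',i')$ then $I_{p},I_{q}$ are distinct connected components of a closed subset of $\Rr$, hence disjoint, and $\Gamma_{p}\cap\Gamma_{q}=\emptyset$. Otherwise let $(s_{0},\theta_{0})\in\Gamma_{p}\cap\Gamma_{q}$. If $k\neq k'$, the interiors of the parameter intervals of the two sides are disjoint, so $s_{0}$ is a common vertex parameter and hence an endpoint of both $I_{p}$ and $I_{q}$, which already yields (4) in that case. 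In all cases $v(s_{0},\theta_{0})$ points from $\zeta(s_{0})$ toward both $C_{i}$ and $C_{i'}$, and writing $d_{k,i}=\norm{C_{i}-\zeta(s_{0})}\cos\theta_{0}$ along this direction the two slopes are
\[
\phi_{k,i}'(s_{0})=-\frac{\cos\theta_{0}}{\norm{C_{i}-\zeta(s_{0})}},\qquad \phi_{k',i'}'(s_{0})=-\frac{\cos\theta_{0}}{\norm{C_{i'}-\zeta(s_{0})}};
\]
when $C_{i}$ and $C_{i'}$ lie on a common ray from $\zeta(s_{0})$ (in particular whenever $k=k'$) these are unequal because the distances are, so the tangent lines are distinct and the intersection is transversal, hence finite. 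For (4) when $k=k'$, say $\norm{C_{i}-\zeta(s_{0})}<\norm{C_{i'}-\zeta(s_{0})}$: the trajectory from $\zeta(s_{0})$ in direction $v(s_{0},\theta_{0})$ meets $\partial P$ at distance $\le\norm{C_{i}-\zeta(s_{0})}<\norm{C_{i'}-\zeta(s_{0})}$, so $q_{1}\neq C_{i'}$ there and $s_{0}\notin J_{k',i'}$; that obstruction must be a vertex (a transversal side crossing would persist for nearby $s$, contradicting $(s_{0},\theta_{0})\in\overline{q_{1}^{-1}(C_{i'})}$) and switches on monotonically as $s$ crosses $s_{0}$, so $s_{0}$ is an endpoint of $I_{q}$ and $(s_{0},\theta_{0})\in\partial\Gamma_{q}$.

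The routine part is the first half: analyticity, monotonicity and range of the $\phi_{k,i}$, together with the finiteness count. The main obstacle is the last paragraph, i.e.\ showing that distinct arcs meet \emph{only} at their endpoints and always transversally. The two delicate points there are (a) two aiming curves over \emph{adjacent} sides can a priori both reach the point lying over their common vertex, where transversality forces that vertex to be at unequal distances from $C_{i}$ and $C_{i'}$ --- a fact that must come from the geometry rather than from $C_{i}\neq C_{i'}$ alone; and (b) when $P$ is non-convex, the ``one-sided grazing'' argument for (4) must be justified carefully at reflex vertices. I expect (a)--(b) to absorb most of the work.
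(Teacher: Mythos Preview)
Your approach is essentially the paper's: write each piece of $S^{+}_{1}$ as the graph of an explicit ``aiming'' function and compute the derivative. The paper uses the arctan form $\varphi_{jk}(s)=\arctan\!\bigl((\bar s-s)/l\bigr)$ with $\varphi'_{jk}(s)=-l/(l^{2}+(\bar s-s)^{2})$, which is the same formula you obtained via $\arg$. One bookkeeping difference: the paper treats each $\gamma_{jk}=q_{1}^{-1}(C_{j})\cap B_{k}$ as a graph over a \emph{single} interval $I_{jk}\subset(\tilde s_{k},\tilde s_{k+1})$, whereas you (correctly, for non-convex polygons) allow the visibility set $J_{k,i}$ to have several components.

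Where you make life harder than necessary is Part~(4). The paper dispatches it in one line: if $x$ lay in the \emph{interior} of two distinct curves $\gamma_{ik}$ and $\gamma_{jk}$, then $q_{1}(x)=C_{i}$ and $q_{1}(x)=C_{j}$ simultaneously, which is impossible since $q_{1}$ is single-valued. Hence any intersection of the closures lies in the boundary of at least one of them, which is exactly~(4). Your obstruction/grazing argument reaches the same conclusion but is unnecessary, and your worry~(b) about reflex vertices evaporates with this observation. For Part~(3) in the same-side case, the paper's argument is also clean: if $\varphi_{ik}(s)=\varphi_{jk}(s)$ then $(\bar s_{i}-s)/l_{i}=(\bar s_{j}-s)/l_{j}=:t$, so $\varphi'_{\cdot k}(s)=-1/\bigl(l_{\cdot}(1+t^{2})\bigr)$; equal derivatives would force $l_{i}=l_{j}$ and then $\bar s_{i}=\bar s_{j}$, i.e.\ $C_{i}=C_{j}$.

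Your concern~(a) about curves over \emph{adjacent} sides meeting over the common vertex is a genuine edge case, but note that the paper's proof also restricts its transversality computation to curves with the same $k$ and does not discuss adjacent-side intersections explicitly. So this is not a place where you need extra machinery to match the paper; at most it is a point both arguments leave implicit.
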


Let $ F(s,\theta) = (s,f(-\theta)) $ for $ (s,\theta) \in M $, and define
\[
S^{-}_{1} = F(S^{+}_{1}).
\]
Since $ F $ is a $ C^{k} $ embedding, $ S^{+}_{1} $ and $ S^{-}_{1} $ are diffeomorphic.

\begin{proposition}
	\label{pr:singminus}
	The conclusions of Proposition~\ref{pr:prop-sing} hold for $ S^{-}_{1} $ if $ f'<0 $, and hold with `decreasing' replaced by `increasing' in conclusion~(3) if $ f'>0 $.
\end{proposition}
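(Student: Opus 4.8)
The plan is to deduce everything by transporting the structure of Proposition~\ref{pr:prop-sing} from $S^{+}_{1}$ to $S^{-}_{1} = F(S^{+}_{1})$ through the map $F(s,\theta) = (s, f(-\theta))$. Since $f \in \RR^{k}_{1}$ is a $C^{k}$ embedding, its derivative never vanishes and hence has a constant sign, so the two alternatives $f' < 0$ and $f' > 0$ in the statement are exhaustive, and $F$ is a $C^{k}$ diffeomorphism onto its image. Writing $S^{+}_{1} = \Gamma_{1} \cup \cdots \cup \Gamma_{m}$ as in Proposition~\ref{pr:prop-sing} and setting $\Gamma^{-}_{i} := F(\Gamma_{i})$, we obtain $S^{-}_{1} = \Gamma^{-}_{1} \cup \cdots \cup \Gamma^{-}_{m}$, with each $\Gamma^{-}_{i}$ a compact curve inheriting from $f$ the relevant regularity (analytic when $f$ is, and $C^{k}$ in general).

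Conclusions (1), (3) and (4) are then immediate. For (1), $F$ fixes the $s$-coordinate, so $\Gamma^{-}_{i}$ stays in the same vertical strip $[\tilde{s}_{k}, \tilde{s}_{k+1}] \times (-\pi/2,\pi/2)$; here one only needs $f\big((-\pi/2,\pi/2)\big) \subset (-\pi/2,\pi/2)$, which follows from $f(0)=0$ and $\lambda(f)<1$. For (3), transversality of curves is preserved by diffeomorphisms, so $\Gamma^{-}_{i}$ and $\Gamma^{-}_{j}$ meet transversally for $i \neq j$. For (4), since $F$ is a bijection onto $S^{-}_{1}$ that commutes with the operation of taking the relative boundary of a curve, $\Gamma^{-}_{i} \cap \Gamma^{-}_{j} = F(\Gamma_{i} \cap \Gamma_{j}) \subset F(\partial \Gamma_{i} \cup \partial \Gamma_{j}) = \partial \Gamma^{-}_{i} \cup \partial \Gamma^{-}_{j}$.

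The only conclusion that differs between the two cases is the one about monotonicity, and this is the point I would treat most carefully. By Proposition~\ref{pr:prop-sing}, each $\Gamma_{i}$ is the graph of a continuous strictly decreasing function $h \colon I \to (-\pi/2,\pi/2)$; hence $\Gamma^{-}_{i}$ is the graph of $g := f \circ (-h) \colon I \to (-\pi/2,\pi/2)$, i.e. $g(s) = f(-h(s))$. Since $s \mapsto -h(s)$ is strictly increasing while $f$ is strictly monotone of constant sign $\sgn(f')$, the composition $g$ is strictly increasing when $f' > 0$ and strictly decreasing when $f' < 0$, precisely the dichotomy asserted; note that no differentiability of $h$ is used, only the way monotonicity types behave under composition. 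Overall there is no genuine obstacle: this is a transport-of-structure statement, and the only points to watch are that $f$ maps $(-\pi/2,\pi/2)$ into itself (so the images stay inside $M$), that the injectivity of $F$ is what yields the containment in (4), and that the sign of $f'$ is propagated correctly through the composition in the monotonicity conclusion.
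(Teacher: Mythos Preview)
Your proof is correct and follows essentially the same approach as the paper: both argue by transporting the structure of $S^{+}_{1}$ through the $C^{k}$ diffeomorphism $F(s,\theta)=(s,f(-\theta))$. The paper's proof is much terser---it simply records that $DF(s,\theta)=\diag(1,-f'(\theta))$ and reads off the monotonicity from the sign of the $(2,2)$ entry---whereas you unpack each conclusion explicitly; your added care about the regularity (analytic only when $f$ is) is a fair observation, since the paper's statement glosses over this.
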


The iterates of the sets of $ S^{+}_{1} $ and $ S^{-}_{1} $ play a crucial role in the study of the properties of the billiard map. Hence, for $ n \ge 1 $, define recursively 
\[ 
S^{+}_{n+1} =  S^{+}_{n} \cup \Phi^{-1}_{f}(S^{+}_{n}) \qquad \text{and} \qquad S^{-}_{n+1} = S^{-}_{n} \cup \Phi_{f}(S^{-}_{n}).
\]

Let $ N^{-}_{1} = \partial (\Phi_{f}(M \setminus N^{+}_{1})) $. Note that while the set $ N^{+}_{1} $ does not depend on the reflection law $ f $, the set $ N^{-1}_{1} $ does. 
For later use, we also introduce the iterates of $ N^{+}_{1} $ and $ N^{-}_{1} $. For $ n \ge 1 $, set
\[ 
N^{+}_{n+1} =  N^{+}_{n} \cup \Phi^{-1}_{f}(N^{+}_{n}) \qquad \text{and} \qquad N^{-}_{n+1} = N^{-}_{n} \cup \Phi_{f}(N^{-}_{n}).
\]
The set $ N^{+}_{n} $ (resp. $ N^{-}_{n} $) consist of the points of $ M $ where the map $ \Phi^{n}_{f} $ (resp. $ \Phi^{-n}_{f} $) is not defined, and is called the \emph{singular set} of $ \Phi^{n}_{f} $ (resp. $ \Phi^{-n}_{f} $). Finally, let $ N^{+}_{\infty} = \bigcup_{n \ge 1} N^{+}_{n} $, and let $ N^{-}_{\infty} = \bigcup_{n \ge 1} N^{-}_{n} $.

\subsection{Derivative of the billiard map} 
\label{sec: derivative bill map}
Given $ (s_{0},\theta_{0}) \in M \setminus N^{+}_{1} $, let $ (s_{1},\theta_{1}) = \Phi_{f}(s_{0},\theta_{0}) $. Recall that $ t(s_{0},\theta_{0}) $ denotes the length of the segment connecting $ q(s_{0},\theta_{0}) $ and $ q_{1}(s_{0},\theta_{0}) $. For the computation of the differential of $ \bar{\Phi} $, see for instance \cite[Formula~(2.26)]{cm06}. Since $ D \Phi_{f} = D R_{f} \cdot D \bar{\Phi} $, we easily obtain
\begin{equation*}
	\begin{split}
		 D \Phi_{f}(s_{0},\theta_{0})
		 = - \begin{pmatrix}
				\dfrac{\cos \theta_{0}}{\cos \bar{\theta}_{1}} & \dfrac{t(s_{0},\theta_{0})}{\cos \bar{\theta}_{1}} \\[1em]
				0 & f'(\bar{\theta}_{1}) \\
			\end{pmatrix}.
	\end{split}
\end{equation*}

Now, suppose that $ (s_{0},\theta_{0}) \in M \setminus N^{+}_{n} $ for some $ n>0 $. Let $ (s_{i},\theta_{i}) = \Phi_{f}^{i} (s_{0},\theta_{0}) $ for $ i = 1,\ldots,n $. We easily see that
\begin{equation}
	\label{eq:dern}
	D\Phi_{f}^{n} (s_{0},\theta_{0}) = (-1)^{n}
	 \begin{pmatrix}
			\alpha_{n}(s_{0},\theta_{0})  & \gamma_{n}(s_{0},\theta_{0}) \\[1em]
			0 & \beta_{n}(s_{0},\theta_{0}) \\
	\end{pmatrix},
\end{equation}
where
\[
\alpha_{n}(s_{0},\theta_{0}) = \frac{\cos \theta_{0}}{\cos \bar{\theta}_{n}} \prod^{n-1}_{i=1} \frac{\cos \theta_{i}}{\cos \bar{\theta}_{i}}, \qquad \beta_{n}(s_{0},\theta_{0}) = \prod^{n}_{i=1} f'(\bar{\theta}_{i}),
\]
and
\[ 
\gamma_{n}(s_{0},\theta_{0}) = \sum^{n-1}_{i=0} \beta_{i}(s_{0},\theta_{0}) \frac{t(s_{i},\theta_{i})}{\cos \bar{\theta}_{n}} \prod^{n-1}_{k=i+1} \frac{\cos \theta_{k}}{\cos \bar{\theta}_{k}}.
\] 

Let $ \rho \colon (-\pi/2,\pi,2) \to [1,+\infty) $ be the function defined by 
\[ 
\rho(\theta) =  \frac{\cos (f(\theta))}{\cos \theta}
\qquad \text{for } \theta \in (-\pi/2,\pi,2).
\] 
Since $ f \in \mathcal{R}^{k}_{1} $, it follows easily that $ \rho $ is continuous, and that $ \rho(\theta) \ge 1 $ with equality if and only if $ \theta = 0 $. Also, from $ \lim_{\theta \to \pm \pi/2} \rho(\theta) = +\infty $, we obtain 
\[ 
r(\epsilon) := \min_{\epsilon \le |\theta| < \pi/2} \rho(\theta) > 1 \qquad \text{for } 0<\epsilon<\pi/2.
\]

Define
\[
\Lambda_{n-1}(s_{1},\theta_{1}) = \prod^{n-1}_{i=1} \rho(\bar{\theta}_{i}).
\]
From the definition of $ \alpha_{n} $, we see that
\begin{equation}
	\label{eq:alpha-lambda}
\alpha_{n}(s_{0},\theta_{0}) = \frac{\cos \theta_{0}}{\cos \bar{\theta}_{n}} \cdot \Lambda_{n-1}(s_{1},\theta_{1}).
\end{equation}

\subsection{Attractors}
Let
\[ 
D_{f} = \bigcap_{n \ge 0} \Phi^{n}_{f} (M \setminus N^{+}_{\infty}).
\] 
It can be checked that $ \Phi^{-1}_{f}(D_{f}) = D_{f} $, which in turn implies that $ D_{f} \cap N^{-}_{\infty} = \emptyset $. Thus, every element of $ D_{f} $ has infinite positive and negative semi-orbits. We say that a subset $ \Sigma $ of $ M $ is \emph{invariant} (resp. \emph{forward invariant}) if $ \Sigma \subset D_{f} $ and $ \Phi^{-1}_{f}(\Sigma) = \Sigma $ (resp. $ \Phi_{f}(\Sigma) \subset \Sigma $). 
Following the terminology introduced by Pesin in his work on maps with singularities \cite{Pesin92}, we call the closure of $ D_{f} $ the \emph{attractor} of $ \Phi_{f} $.

\begin{lemma}
	\label{le:bounded-angle}
	We have 
	\[ 
	D_{f} \subset \tilde{D}_{f}:= \left\{(s,\theta) \in M: |\theta| < \frac{\pi}{2}\lambda(f) \right\}.
	\]
\end{lemma}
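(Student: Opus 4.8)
The plan is to show that any collision $(s,\theta)\in D_f$ satisfies $|\theta|<\tfrac{\pi}{2}\lambda(f)$. The key observation is that every point of $D_f$ has an infinite \emph{negative} semi-orbit inside $M\setminus N^+_\infty$; in particular, given $(s_0,\theta_0)\in D_f$, there exists $(s_{-1},\theta_{-1})\in M\setminus N^+_1$ with $\Phi_f(s_{-1},\theta_{-1})=(s_0,\theta_0)$. By the definition of the billiard map with reflection law $f$, the angle coordinate $\theta_0$ equals $f(\bar\theta_1(s_{-1},\theta_{-1}))$, where $\bar\theta_1\in(-\pi/2,\pi/2)$ is the specular reflection angle at the collision point. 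Thus $\theta_0$ lies in the image $f\bigl((-\pi/2,\pi/2)\bigr)$.

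First I would record that, since $f\in\RR^k_1$, we have $f(0)=0$ and $|f'|\le\lambda(f)<1$ everywhere, so by the mean value theorem
\[
|f(\theta)| = |f(\theta)-f(0)| \le \lambda(f)\,|\theta| < \frac{\pi}{2}\,\lambda(f)
\qquad\text{for every }\theta\in\left(-\frac{\pi}{2},\frac{\pi}{2}\right).
\]
Hence the entire image $f\bigl((-\pi/2,\pi/2)\bigr)$ is contained in the open interval $\bigl(-\tfrac{\pi}{2}\lambda(f),\tfrac{\pi}{2}\lambda(f)\bigr)$. Combining this with the previous paragraph, $\theta_0=f(\bar\theta_1(s_{-1},\theta_{-1}))$ satisfies $|\theta_0|<\tfrac{\pi}{2}\lambda(f)$, which is exactly the assertion $D_f\subset\tilde D_f$.

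The only point requiring a little care is the justification that every $(s_0,\theta_0)\in D_f$ does admit such a preimage $(s_{-1},\theta_{-1})\in M\setminus N^+_1$. This follows from the identity $\Phi_f^{-1}(D_f)=D_f$ noted just before the lemma statement: since $D_f=\Phi_f^{-1}(D_f)$, the point $(s_0,\theta_0)$ is the image under $\Phi_f$ of some point of $D_f\subset M\setminus N^+_\infty\subset M\setminus N^+_1$. I do not expect any genuine obstacle here; the argument is essentially a one-line consequence of the definition of $\Phi_f$ together with the contraction property of $f$, and the only thing to be stated cleanly is the backward-invariance of $D_f$.
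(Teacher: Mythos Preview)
Your proposal is correct and follows essentially the same approach as the paper: the paper's proof is a two-line remark that $D_f$ is invariant and that the angle of every element of $\Phi_f(D_f)$ has absolute value at most $\pi\lambda(f)/2$, which is precisely what you have spelled out in detail (with the mean value theorem making the strict inequality explicit).
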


\begin{proof}
The conclusion is a direct consequence of the invariance of $ D_{f} $, 
and the fact that the absolute value of the angle of every element of $ \Phi_{f}(D_{f}) $ is not larger than $  \pi \lambda(f)/2 $.
\end{proof}

\begin{defn}
	We say that two sides $ L_{1} $ and $ L_{2} $ of $ P $ \emph{see each other} if there exists a line segment of a billiard trajectory whose endpoints lie in the interior of $ L_{1} $ and $ L_{2} $. 
\end{defn}

Let $ \nu $ be the Lebesgue measure $ ds d\theta $ of $ \Rr^{2} $. When the reflection law is the standard one, i.e., $ f = 1 $, we have $ D_{f} = M \setminus (N^{-}_{\infty} \cup N^{+}_{\infty}) $, and $ \cos \theta d s d \theta $ is the natural invariant measure for $ \Phi_{f} $. In this case, it is easy to see that $ \nu(D_{f}) = \nu(M) >0 $. For $ f \in \mathcal{R}^{k}_{1} $, the situation is more delicate; we may have $ \nu(D_{f})=0 $, and therefore only singular invariant measures for $ \Phi_{f} $. The measures that play a prominent role in the study of hyperbolic systems are the SRB measures. We will prove the existence of these measures in Section~\ref{sec:SRB}. In the following propositions, we give sufficient conditions for $ \nu(D_{f}) = 0 $. 

\begin{proposition}
	\label{pr:zero1}
Suppose that $ P $ has the property 
\[ 
|\pi - \delta(L_{i},L_{j})| \neq \pi/2
\] 
for all the pairs of sides $ L_{i} $ and $ L_{j} $ seeing each other. Then $ \nu(D_{f}) = 0 $ for $ \lambda(f) $ sufficiently small.
\end{proposition}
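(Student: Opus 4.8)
The plan is to show that for $\lambda(f)$ small, the derivative $D\Phi_f^n$ expands the unstable direction at a uniform exponential rate along the relevant part of the orbit, while the reflection law contracts by a factor at most $\lambda(f)^n$ in the stable direction; together with the geometric hypothesis this forces the second iterate of (almost every) segment of $M\setminus N_1^+$ to be stretched out of $M$, so that $D_f$ cannot support positive $\nu$-measure. Concretely, recall from Lemma~\ref{le:bounded-angle} that $D_f\subset\tilde D_f$, so along any orbit staying in $D_f$ every angle $\theta_i$ and every reflected angle $\bar\theta_i$ is controlled: $|\theta_i|<\frac{\pi}{2}\lambda(f)$, hence $|\bar\theta_i|$ is bounded away from $\pm\pi/2$ (using \eqref{eq:angle-after}) precisely when the sides $L_{i-1},L_i$ seen in succession do \emph{not} make the forbidden angle $|\pi-\delta(L_i,L_j)|=\pi/2$. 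The hypothesis is exactly what guarantees that for every pair of sides that can see each other, the quantity $\cos\bar\theta_1 = |\cos(\pi-\delta(L_0,L_1)-\theta_0)|$ stays bounded away from $0$ once $|\theta_0|$ is small; equivalently the factor $\cos\theta_0/\cos\bar\theta_1$ appearing in the off-diagonal-free part of $D\Phi_f$ is bounded away from zero, so from \eqref{eq:alpha-lambda} and $\rho\ge 1$ we get $|\alpha_n(s_0,\theta_0)|\ge c\,\Lambda_{n-1}\ge c>0$ with $c$ independent of the orbit.

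First I would make the "expansion" quantitative. Since $P$ has finitely many sides, there are only finitely many ordered pairs $(L_i,L_j)$ of sides that see each other, and by hypothesis $|\pi-\delta(L_i,L_j)|\ne\pi/2$ for all of them; hence there is $\epsilon_0\in(0,\pi/2)$ and $\kappa>1$ such that whenever $|\theta_0|\le\epsilon_0$ and $q(x)\in L_i$, $q_1(x)\in L_j$, one has $\cos\theta_0/\cos\bar\theta_1\ge\kappa$. Choosing $\lambda(f)$ small enough that $\frac{\pi}{2}\lambda(f)<\epsilon_0$, Lemma~\ref{le:bounded-angle} puts every point of $D_f$ in the regime $|\theta|<\epsilon_0$, so along any full orbit in $D_f$ we get $|\alpha_n|\ge\kappa^{\,n}$ and $|\beta_n|=\prod|f'(\bar\theta_i)|\le\lambda(f)^n$. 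Second, I would pass from the pointwise expansion to a measure statement. The map $\Phi_f$ on $M\setminus N_1^+$ is a $C^k$ embedding onto $\Phi_f(M\setminus N_1^+)$, and $D_f=\bigcap_{n\ge0}\Phi_f^n(M\setminus N_\infty^+)$. Cover $M\setminus N_\infty^+$ by the finitely (or countably) many maximal open cells $U$ on which $\Phi_f$ is a diffeomorphism onto its image; on each such cell the image has uniformly bounded diameter (it lies in $M$, whose $\theta$-extent is $\pi$ and whose $s$-extent is $|\partial P|$), while $D\Phi_f^n$ stretches the horizontal direction by at least $\kappa^n$. A point of $D_f$ must have its entire forward orbit well-defined, so $D_f\cap U\subset\Phi_f^{-n}(\text{set of }\nu\text{-measure}\le C\kappa^{-n})$ for every $n$: indeed, $\nu(\Phi_f^n(A))=\int_A|\det D\Phi_f^n|\,d\nu=\int_A|\alpha_n\beta_n|\,d\nu$, and I want the reverse estimate. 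So more carefully: for a measurable $A\subset D_f\cap U$, $\nu(\Phi_f^n(A))=\int_A|\alpha_n\beta_n|\,d\nu$, which can be small even though $\alpha_n$ is large, because $\beta_n\le\lambda(f)^n$; but the right tool is to look at the image in the $s$-coordinate alone. Because $D\Phi_f^n$ is upper triangular with diagonal $(\pm\alpha_n,\pm\beta_n)$, the projection $\pi_s\circ\Phi_f^n$ restricted to a horizontal segment inside $A$ has derivative exactly $\pm\alpha_n$; since the image lies in $(0,|\partial P|)$, the $s$-length of any horizontal slice of $A$ is at most $|\partial P|\kappa^{-n}$. By Fubini, $\nu(A)\le\pi\cdot|\partial P|\,\kappa^{-n}\to0$, hence $\nu(D_f\cap U)=0$, and summing over the countably many cells $U$ gives $\nu(D_f)=0$.

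The main obstacle I expect is the second step, namely justifying that the "slices" of $D_f$ along horizontal lines really are sent by $\pi_s\circ\Phi_f^n$ into the bounded interval $(0,|\partial P|)$ with derivative $\alpha_n$ — this requires knowing that a horizontal segment through a point of $D_f\cap U$ survives $n$ iterations as a connected curve on which $\Phi_f^n$ is smooth, which is false for the full horizontal segment but true for the connected component of it inside $M\setminus N_n^+$ containing the point. One has to localize: replace the horizontal slice of $U$ by the connected component $I_x$ of $(\{\,|\theta|<\epsilon_0\}\cap\text{horizontal line through }x)\setminus N_n^+$ containing $x$; on $I_x$ the map $\Phi_f^n$ is a smooth embedding, $\pi_s\circ\Phi_f^n|_{I_x}$ has derivative $\pm\alpha_n$ with $|\alpha_n|\ge\kappa^n$ (the expansion estimate only used membership of the orbit in $\tilde D_f$, which holds on $I_x\cap D_f$, but one must check it holds on all of $I_x$ — it does, since $|\theta|<\epsilon_0$ forces all subsequent $|\theta_i|<\epsilon_0$ by the contraction $|f'|<1$, and $\epsilon_0<\pi/2$ keeps $\cos\bar\theta_i$ away from $0$ via the finitely-many-angles argument), and the image has $s$-length at most $|\partial P|$. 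Hence $\operatorname{length}(I_x)\le|\partial P|\kappa^{-n}$ for every $x\in D_f$, so every horizontal slice of $D_f$ has one-dimensional Lebesgue measure $\le|\partial P|\kappa^{-n}$ for all $n$, i.e. measure zero, and Fubini finishes the proof. The only remaining care is the degenerate case where two sides seeing each other are parallel (then $\delta=\pi$, $|\pi-\delta|=0\ne\pi/2$, consistent with the hypothesis) — there $\cos\bar\theta_1=|\cos\theta_0|$ gives no expansion, but then $|\alpha_n|\ge 1$ only, not $\kappa^n$; however parallel sides force $\bar\theta_1=-\theta_0$, so the orbit bounces between two parallel sides with $\theta$ constant in absolute value, and such orbits (other than $\theta=0$) lie outside $D_f$ once $\lambda(f)<1$ because the reflection law strictly contracts $\theta$ toward $0$ at each step, pushing the orbit off any periodic parallel-bouncing and eventually into a side making a non-forbidden angle — this is the one place where a short separate argument, handling chains of parallel sides, is needed before the uniform $\kappa>1$ kicks in.
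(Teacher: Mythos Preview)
Your approach is fundamentally different from the paper's, and it contains a genuine gap tied to a misreading of the hypothesis.

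The paper's proof is a one-line Jacobian argument. The Jacobian of $\Phi_f$ at a point of $D_f$ is
\[
J(s_0,\theta_0)=\frac{\cos\theta_0}{\cos\bar\theta_1}\,|f'(\bar\theta_1)|
\le \frac{\lambda(f)}{|\cos(\pi-\delta(L_0,L_1)-\theta_0)|}.
\]
The hypothesis $|\pi-\delta(L_i,L_j)|\neq\pi/2$ says precisely that $\cos(\pi-\delta)$ is bounded \emph{away from zero} over the finitely many admissible pairs; combined with $|\theta_0|\le\lambda(f)\pi/2$ this makes the denominator uniformly bounded below, so $J\le C\lambda(f)<1$ once $\lambda(f)$ is small. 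Since $\Phi_f(D_f)=D_f$ and $\Phi_f$ is injective, $\nu(D_f)=\int_{D_f}J\,d\nu\le \bar J\,\nu(D_f)$ with $\bar J<1$, whence $\nu(D_f)=0$.

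You try instead to get \emph{horizontal expansion} and run a Fubini argument. The problem is that you attribute to the hypothesis something it does not give. Keeping $\cos\bar\theta_1$ away from $0$ bounds the ratio $\cos\theta_0/\cos\bar\theta_1$ from \emph{above}, which is exactly what the Jacobian argument needs; it does not force that ratio to exceed $1$. In fact, for $\theta_0$ near $0$ one has $\cos\theta_0/\cos\bar\theta_1>1$ iff $|\pi-\delta|>0$, i.e.\ iff the two sides are \emph{non-parallel}. That is a different condition from $|\pi-\delta|\neq\pi/2$. A non-rectangular parallelogram satisfies the proposition's hypothesis (opposite sides give $|\pi-\delta|=0$, adjacent sides give $|\pi-\delta|=\alpha\neq\pi/2$), yet every bounce between its parallel sides contributes a factor exactly $1$ to $\alpha_n$, so no uniform $\kappa>1$ exists and your estimate $|\alpha_n|\ge\kappa^n$ fails.

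Your closing paragraph tries to rescue this by arguing that parallel-bouncing orbits ``lie outside $D_f$''. That is not true as stated (the period-two set $\mathcal P$ sits inside $D_f$), and even ignoring the measure-zero set $\mathcal P$, orbits in $D_f$ can spend arbitrarily many consecutive steps between parallel sides before hitting an adjacent one; you would need a uniform bound on that number (Property~(A) in the paper's language) to recover a uniform expansion rate, and the hypothesis here gives no such bound. So for polygons with parallel sides facing each other your slice-length estimate $\mathrm{length}(I_x)\le|\partial P|\kappa^{-n}$ does not go through, and the argument has a real gap. For polygons without parallel sides your route would work, but then the hypothesis $|\pi-\delta|\neq\pi/2$ plays no role in it---which is another sign that the mechanism you are invoking is not the one the proposition is about.
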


\begin{proposition}
	\label{pr:zero2}
Suppose that 
\[ 
\sup_{\theta \in (-\frac{\pi}{2},\frac{\pi}{2})} \frac{|f'(\theta)|}{\cos \theta} < 1.
\]
Then $ \nu(D_{f}) = 0 $.
\end{proposition}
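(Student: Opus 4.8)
The plan is to show that the area of $\Phi_f(M\setminus N^+_1)$ is strictly less than the area of $M$, and more precisely that $\Phi_f$ contracts $\nu$-measure by a uniform factor $<1$ at every point, so that $\nu(D_f)$, being the measure of a nested intersection of forward images, must vanish. The key is the Jacobian of the billiard map. From the derivative formula in Section~\ref{sec: derivative bill map}, at a point $x=(s_0,\theta_0)$ with $\bar\theta_1=\bar\theta_1(x)$ we have
\[
\left|\det D\Phi_f(x)\right| = \left|\frac{\cos\theta_0}{\cos\bar\theta_1}\cdot f'(\bar\theta_1)\right| = \frac{\cos\theta_0}{\cos\bar\theta_1}\,\bigl|f'(\bar\theta_1)\bigr|.
\]
By hypothesis $|f'(\theta)|/\cos\theta \le c < 1$ for all $\theta\in(-\pi/2,\pi/2)$, where $c:=\sup_\theta |f'(\theta)|/\cos\theta$. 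Hence $|f'(\bar\theta_1)| \le c\cos\bar\theta_1$, and therefore
\[
\left|\det D\Phi_f(x)\right| \le \frac{\cos\theta_0}{\cos\bar\theta_1}\cdot c\cos\bar\theta_1 = c\,\cos\theta_0 \le c.
\]

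First I would use this pointwise Jacobian bound together with the change-of-variables formula: since $\Phi_f$ is a $C^1$ embedding on $M\setminus N^+_1$ and $N^+_1$ has zero $\nu$-measure (it is a finite union of curves, by Proposition~\ref{pr:prop-sing} and the definitions), for any measurable $\Sigma\subset M\setminus N^+_1$ we get $\nu(\Phi_f(\Sigma)) = \int_\Sigma |\det D\Phi_f|\,d\nu \le c\,\nu(\Sigma)$. Iterating, $\nu\bigl(\Phi_f^n(M\setminus N^+_\infty)\bigr) \le c^n\,\nu(M)$, using that $N^+_\infty = \bigcup_n N^+_n$ is still $\nu$-null (a countable union of finite unions of curves) and that $\Phi_f^n$ is a $C^1$ embedding on $M\setminus N^+_n$. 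Then, since $D_f = \bigcap_{n\ge0}\Phi^n_f(M\setminus N^+_\infty) \subset \Phi^n_f(M\setminus N^+_\infty)$ for every $n$, we conclude $\nu(D_f) \le \inf_n c^n\,\nu(M) = 0$.

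I do not expect any substantial obstacle here; the statement is essentially the observation that the hypothesis $\sup_\theta |f'(\theta)|/\cos\theta < 1$ is exactly what is needed to make the Jacobian of $\Phi_f$ uniformly less than $1$ in absolute value, whereupon $\nu(D_f)=0$ follows by a routine measure-iteration argument. The only minor points requiring care are: (i) confirming that $\bar\theta_1$ ranges in $(-\pi/2,\pi/2)$ so the hypothesis applies at the relevant argument — this holds because $\bar\theta_1(x)$ is by construction an angle in the open interval $(-\pi/2,\pi/2)$; (ii) justifying the change of variables across the (null) singularity sets, which is standard since $\Phi^n_f$ restricts to a diffeomorphism onto its image on the open full-measure set $M\setminus N^+_n$; and (iii) noting $\Phi^n_f(M\setminus N^+_\infty)$ is measurable, e.g. because it differs from the continuous image $\Phi^n_f(M\setminus N^+_n)$ of an open set by a $\nu$-null set. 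No hypothesis on the polygon $P$ enters, consistent with the statement.
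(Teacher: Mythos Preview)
Your proof is correct and follows essentially the same approach as the paper's: both arguments reduce to the observation that the hypothesis forces the Jacobian $|\det D\Phi_f| = \frac{\cos\theta_0}{\cos\bar\theta_1}|f'(\bar\theta_1)|$ to be uniformly bounded by a constant $c<1$, from which $\nu(D_f)=0$ follows by the standard iteration. The paper's proof is a one-liner that merely asserts this Jacobian bound and leaves the measure-contraction argument implicit, whereas you spell out the change-of-variables step and the handling of the null singular sets explicitly.
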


\begin{remark}
	An example of a reflection law that satisfies the hypothesis of Proposition~\ref{pr:zero2} is $ f(\theta) = \sigma \sin \theta $ with $ 0 < \sigma < 1 $. 
\end{remark}

\subsection{Notation}
Instead of $ \Phi_{f}, \lambda(f),D_{f} $, we write $ \Phi, \lambda, D $ when no confusion can arise. We also write $ x_{n} = (s_{n},\theta_{n}) $ for $ \Phi^{n}_{f} (x_{0}) $ with $ x_{0}= (s_{0},\theta_{0}) $.

\section{Hyperbolicity} 
\label{sec:hyperbolicity}

Let $ u \in T_{x} M $ with $ x \in M $, and denote by $ u_{s} $ and $ u_{\theta} $ the components of $ u $ with respect to the coordinates $ (s,\theta) $. By setting $ \|u\|^{2} = u^{2}_{s} + u^{2}_{\theta} $ for every $ u \in T_{x}M $, we introduce a differentiable norm $ \| \cdot \| $ on $ M $. Let $ d $ be the metric on $ M $ generated by $ \| \cdot \| $.

We say that an invariant set $ \Sigma \subset M $ has a \emph{dominated splitting} if there exist a non-trivial continuous splitting $ T_{\Sigma} M = E \oplus F $ and two constants $ 0 < \mu < 1 $ and $ A > 0 $ such that for all $x \in \Sigma$ and $n \ge 1$, we have
\[
\|D\Phi^{n}|_{E(x)}\| \cdot \|D\Phi^{-n}|_{F(\Phi^{n} x)}\| \le A \mu^{n}.
\]

A invariant set $ \Sigma \subset M $ is \emph{hyperbolic} if there exist a non-trivial measurable splitting $ T_{\Sigma} M = E \oplus F $ and two measurable functions $ 0 < \mu < 1 $ and $ A > 0 $ on $\Sigma$ such that for all $ x \in \Sigma$ and $ n \ge 1$, we have
\begin{equation*}
\begin{split}
\|D\Phi^{n}|_{E(x)}\| & \le A(x) \mu(x)^{n} \\
\|D\Phi^{-n}|_{F(\Phi^{n} x)}\| & \le A(x) \mu(x)^{n}.
\end{split}
\end{equation*}
If the functions $\mu$ and $A$ can be replaced by constants and the splitting is continuous, then $\Sigma$ is called \emph{uniformly hyperbolic}, otherwise it is called \emph{non-uniformly hyperbolic}. If a set is uniform hyperbolicity, then clearly it admits a dominated splitting.

\subsection{Dominated splitting}
\label{su:dominated}
In \cite{markarian10}, Markarian, Pujals and Sambarino proved that for every contracting reflection law with $ f'>0 $ and a large family of planar domains, including polygons, the invariant sets of $ \Phi $ have dominated splitting. Their proof is based on the construction of an invariant cone field. We give below a proof of this fact only restricted to polygonal billiards but including reflection laws with $ f'<0 $ that is much more direct than the proof in \cite{markarian10}, not requiring the construction of an invariant cone field.

\begin{proposition}
	\label{pr:dominated}
	Every invariant set $ \Sigma $ of $ \Phi $ has a dominated splitting. Moreover, 
	\begin{enumerate}
		\item if $ \limsup_{n \to +\infty} n^{-1} \log \alpha_{n}(x) > 0 $ for every $ x \in \Sigma $, then $ \Sigma $ is hyperbolic,
		\item if there exist $ A>0 $ and $ \mu>1 $ such that $ \alpha_{n}(x) \ge A \mu^{n} $ for every $ x \in \Sigma $, then $ \Sigma $ is uniformly hyperbolic.
	\end{enumerate}
\end{proposition}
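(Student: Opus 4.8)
The plan is to exploit the special upper-triangular form of the derivative matrix $D\Phi^{n}$ given in \eqref{eq:dern}. Because the matrix is triangular, its action on the two coordinate directions is essentially decoupled: the vertical direction $F = \{u_{s}=0\}$ is invariant under $D\Phi^{n}$, and along it the derivative acts by multiplication by $(-1)^{n}\beta_{n}$, while the horizontal direction $E = \{u_{\theta}=0\}$ is invariant under $D\Phi^{-n}$, and along it $D\Phi^{n}$ acts by multiplication by $(-1)^{n}\alpha_{n}$. So I would take $E$ to be the horizontal line field $\partial_{s}$ and $F$ to be the vertical line field $\partial_{\theta}$; this is a continuous (indeed constant) splitting defined on all of $M$, hence on any invariant set.

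The key estimate is then the domination inequality. First, $\|D\Phi^{n}|_{E(x)}\| = \alpha_{n}(x)$ and $\|D\Phi^{-n}|_{F(\Phi^{n}x)}\| = \beta_{n}(x)^{-1}$, so the product to be controlled is $\alpha_{n}(x)/\beta_{n}(x)$. Now I would compare $\alpha_{n}$ and $\beta_{n}$ factor by factor. We have $\beta_{n} = \prod_{i=1}^{n} f'(\bar\theta_{i})$, so $|\beta_{n}| \le \lambda^{n}$ since $\lambda = \lambda(f) < 1$. For $\alpha_{n}$, using \eqref{eq:alpha-lambda} and the definition of $\rho$, one gets $\alpha_{n}(s_{0},\theta_{0}) = \tfrac{\cos\theta_{0}}{\cos\bar\theta_{n}}\prod_{i=1}^{n-1}\rho(\bar\theta_{i})$; the factors $\rho(\bar\theta_{i})\ge 1$, and the boundary factor $\tfrac{\cos\theta_{0}}{\cos\bar\theta_{n}}$ is bounded below by a positive constant — indeed $|\bar\theta_{n}|\le\pi/2$ and, more to the point, by Lemma~\ref{le:bounded-angle} the angle coordinates of points in $D$ stay in $(-\tfrac{\pi}{2}\lambda,\tfrac{\pi}{2}\lambda)$, so $\cos\bar\theta_{n}$ and $\cos\theta_{0}$ are bounded away from $0$, giving $\alpha_{n}(x)\ge c_{0}$ for a constant $c_{0}>0$ depending only on $\lambda$. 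Hence, combining a lower bound $\beta_n^{-1} \le$ (something) with the triangular structure, I need instead an \emph{upper} bound on $\alpha_n$ and a lower bound on $|\beta_n|$. The clean route: $|\beta_{n}(x)|^{-1}\ge 1$ is the wrong direction, so I reorganize — write the product $\alpha_{n}\cdot|\beta_{n}|^{-1}$, bound $|\beta_{n}|^{-1}\le \lambda^{-n}$... no. The correct pairing is that domination needs $\|D\Phi^{n}|_{E}\|\cdot\|D\Phi^{-n}|_{F}\|$ small, i.e. $\alpha_{n}/|\beta_{n}|$ — wait, that product is generally \emph{large}. So the splitting must be the other way: $E = $ vertical $\partial_{\theta}$ (the contracting/dominated direction) and $F = $ horizontal $\partial_{s}$. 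Then $\|D\Phi^{n}|_{E}\| = |\beta_{n}| \le \lambda^{n}$, and $\|D\Phi^{-n}|_{F(\Phi^{n}x)}\| = \alpha_{n}(x)^{-1} \le c_{0}^{-1}$ by the lower bound on $\alpha_{n}$ from Lemma~\ref{le:bounded-angle}. Thus $\|D\Phi^{n}|_{E}\|\cdot\|D\Phi^{-n}|_{F(\Phi^{n}x)}\| \le c_{0}^{-1}\lambda^{n}$, which is the domination inequality with $\mu=\lambda$ and $A=c_{0}^{-1}$.

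For the two hyperbolicity refinements: statement (1) asks for $E$ to be contracting and $F$ to be expanding in the non-uniform (measurable, pointwise) sense. The contraction of $E=\partial_\theta$ is automatic from $|\beta_{n}(x)|\le\lambda^{n}$ with $\lambda<1$. For $F=\partial_{s}$, we have $\|D\Phi^{-n}|_{F(\Phi^{n}x)}\| = \alpha_{n}(x)^{-1}$, so the hypothesis $\limsup_{n}n^{-1}\log\alpha_{n}(x)>0$ says exactly that $\alpha_{n}(x)^{-1}$ decays exponentially along a subsequence; combined with the domination already established (which upgrades a limsup to genuine exponential control by a standard argument — domination plus positive exponent in one direction forces uniform exponential behavior along the subsequence and, via subadditivity of $\log\alpha_{n}$ up to bounded error, along the full sequence), this yields the measurable functions $\mu(x)<1$ and $A(x)$. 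For statement (2), the hypothesis $\alpha_{n}(x)\ge A\mu^{n}$ with $\mu>1$ uniform gives directly $\|D\Phi^{-n}|_{F(\Phi^{n}x)}\|\le A^{-1}\mu^{-n}$, and with $\|D\Phi^{n}|_{E(x)}\|\le\lambda^{n}$ we get uniform hyperbolicity with a continuous (constant) splitting.

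The main obstacle I anticipate is the promotion in part (1) from a $\limsup$ condition to a bona fide exponential estimate with measurable rate functions: one must check that the quantities $\log\alpha_{n}$ are sub/super-additive up to a uniformly bounded error term (the error coming precisely from the boundary factor $\cos\theta_{0}/\cos\bar\theta_{n}$, which is bounded above and below by Lemma~\ref{le:bounded-angle}), so that $\log\alpha_{n+m}(x) \ge \log\alpha_{n}(x) + \log\alpha_{m}(\Phi^{n}x) - C$, and then invoke the subadditive ergodic theorem or a direct Pliss-type argument to convert the $\limsup$ into an almost-everywhere exponential lower bound. Once that bookkeeping is in place, the rest is the elementary triangular-matrix computation sketched above. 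A secondary point to handle carefully is the non-triviality and continuity of the splitting on $\Sigma$, which is immediate here because $E$ and $F$ are the fixed coordinate axes, but should be stated.
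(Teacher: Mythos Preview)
Your proposal has a genuine error at the very first step: the vertical subbundle $\{u_{s}=0\}$ is \emph{not} invariant under $D\Phi$. The derivative matrix in \eqref{eq:dern} is upper triangular with a nonzero off-diagonal entry $\gamma_{n}$, so
\[
D\Phi^{n}(x_{0})\begin{pmatrix}0\\1\end{pmatrix}
=(-1)^{n}\begin{pmatrix}\gamma_{n}(x_{0})\\\beta_{n}(x_{0})\end{pmatrix},
\]
which lands outside the vertical line whenever $\gamma_{n}\neq 0$. Consequently the pair (horizontal, vertical) is not an invariant splitting, and the computation $\|D\Phi^{n}|_{E}\|=|\beta_{n}|$ is simply false: the norm of that image vector is $\sqrt{\gamma_{n}^{2}+\beta_{n}^{2}}$, and $\gamma_{n}$ is in general comparable to $\alpha_{n}$, hence possibly growing. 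Only the horizontal direction is invariant (it is the eigenspace for the eigenvalue $(-1)^{n}\alpha_{n}$), and this is what the paper uses for $F$.

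The missing idea is exactly the construction the paper carries out: the contracting bundle $E$ must be built as the limit $E(x_{0})=\lim_{n\to\infty}D\Phi^{-n}(x_{n})V(x_{n})$ of backward pushforwards of the vertical subspace. Showing this limit exists, is transversal to $F$, is invariant, and is continuous amounts to proving that the sequence $\gamma_{n}(x_{0})/\alpha_{n}(x_{0})$ converges uniformly on $\Sigma$; the paper does this via the series bound $\sum_{i}t(x_{i})\Lambda_{i}^{-1}|\beta_{i}|\le(\operatorname{diam}P)/(1-\lambda)\cdot(\cos\tfrac{\pi\lambda}{2})^{-1}$. Only after $E$ is constructed can one bound $\|D\Phi^{n}|_{E(x_{0})}\|\le A\lambda^{n}$ and combine it with \eqref{eq:F-expansion} to get domination. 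Your sketch skips this entirely, and without it there is no invariant splitting to speak of. (Parts (1) and (2) then follow easily once the splitting is in hand, as you say --- the paper does not need any subadditivity or Pliss-type argument for (1), since $E$ is already uniformly contracting and the $\limsup$ condition on $\alpha_{n}$ directly gives pointwise exponential expansion along $F$.)
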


\begin{proof}
Suppose that $ \Sigma $ is an invariant set. Given a point $ x_{0} \in \Sigma $, let $ x_{n} = \Phi(x_{0}) $. From \eqref{eq:dern}, we obtain 
\begin{equation*}
		D\Phi^{-n} (x_{n}) = (-1)^{n}
		 \begin{pmatrix}
				\alpha^{-1}_{n}(x_{0})  & -\dfrac{\gamma_{n}(x_{0})}{\alpha_{n}(x_{0}) \beta_{n}(x_{0})} \\[1em]
				0 & \beta^{-1}_{n}(x_{0}) \\
		\end{pmatrix} 
		\qquad \text{for } n>0.
	\end{equation*}

Let $ F = \{F(x_{0})\}_{x_{0} \in \Sigma} $ be the horizontal subbundle on $ \Sigma $ given by $ F(x_{0}) = \{u \in T_{x_{0}}M: u_{\theta} = 0 \} $. Clearly $ F $ is continuous and invariant. Also, we have 
\begin{equation}
	\label{eq:F-expansion}
\|D\Phi^{-n}|_{F(x_{n})}\| = \alpha^{-1}_{n}(x_{0}) \qquad \text{for } n > 0.
\end{equation}

Next, let $ V(x_{0}) = \{u \in T_{x_{0}}M:u_{s} = 0 \} $ be the vertical subspace at $ x_{0} \in \Sigma $. We define the subbundle $ E $ as the collection $ \{E(x_{0})\}_{x_{0} \in \Sigma} $ with $ E(x_{0}) $ being the subspace of $ T_{x_{0}}M $ defined by
	\begin{equation}
		\label{eq:dominated}
	E(x_{0}) = \lim_{n \to +\infty} D \Phi^{-n}(x_{n}) V(x_{n}),
	\end{equation}
	where the limit has to be understood in the projective sense. Since 
	\begin{equation*}
	D\Phi^{-n}(x_{n}) 
	\begin{pmatrix}
		0 \\
		1
	\end{pmatrix}
	= -(\beta_{n}(x_{0}))^{-n}
	\begin{pmatrix}
		-\frac{\gamma_{n}(x_{0})}{\alpha_{n}(x_{0})} \\
		1
	\end{pmatrix},
	\end{equation*}
	the existence of the limit in \eqref{eq:dominated} follows from the convergence of the sequence $ \{\gamma_{n}(x_{0}) / \alpha_{n}(x_{0})\}_{n \ge 1} $. As a matter of fact, this sequence is uniformly absolutely-convergent on $ \Sigma $. Indeed, 
	\[
	\lim_{n \to +\infty} \frac{\gamma_{n}(x_{0})}{\alpha_{n}(x_{0})} = \frac{1}{\cos \theta_{0}} \sum^{\infty}_{i=0} \frac{t(x_{i})}{\Lambda_{i}(x_{1})} \beta_{i}(x_{i}),
	\]
	and since $ t(x_{i}) \le \operatorname{diam} P $ and $ 1 \le \Lambda_{i}(x_{1}) $ for every $ i $, $ |\theta_{0}| < \lambda \pi/2 $ by Lemma~\ref{le:bounded-angle}, and $ |\beta_{n}| \le \lambda^{n} $, we have 
	\[
	\frac{1}{\cos \theta_{0}} \sum^{\infty}_{i=0} \frac{t(x_{i})}{\Lambda_{i}(x_{1})} |\beta_{i}(x_{i})| \le 
	\frac{1}{\cos \theta_{0}} \sum^{\infty}_{i=0} \frac{t(x_{i})}{\Lambda_{i}(x_{1})} \lambda^{i} \\
	\le \left(\cos \frac{\pi \lambda}{2}\right)^{-1} \frac{\operatorname{diam} P}{1-\lambda}. 
	\]
	From the convergence of $ \{\gamma_{n}(x_{0}) / \alpha_{n}(x_{0})\}_{n \ge 1} $, we can immediately deduce that the subspaces $ E(x_{0}) $ and $ F(x_{0}) $ are transversal, and the subbundle $ E $ is invariant (since $ E(x_{0}) $ is defined as a limit). Moreover, from the uniform absolute convergence of $ \{\gamma_{n}(x_{0}) / \alpha_{n}(x_{0})\}_{n \ge 1} $ and the continuity of functions $ \gamma_{n}(x_{0}) / \alpha_{n}(x_{0}) $ on $ \Sigma $, we deduce that $ E $ is continuous on $ \Sigma $.

Now, it is not difficult to see that 
	\begin{align*}
	\|D\Phi^{n}|_{E(x_{0})}\| & = \lim_{k\to +\infty} \frac {\left\|D \Phi^{n-k}(x_{k}) (0,1)^T\right\|}{\left\| D \Phi^{-k}(x_{k}) (0,1)^T \right\|} \\
	& = \lambda^n \lim_{k\to +\infty} \frac{\left(1+\left(\frac{\gamma_{k-n}(x_{n})}{\alpha_{k-n}(x_{n})}\right)^2\right)^{\frac{1}{2}}} {\left(1+\left(\frac{\gamma_{k}(x_{0})}{\alpha_{k}(x_{0})}\right)^2\right)^{\frac{1}{2}}} \\
	& \le \lambda^{n} \left(1+ \left(\cos \frac{\pi \lambda}{2}\right)^{-1} \frac{\operatorname{diam} P}{1-\lambda}\right).
	\end{align*}
	It follows that $ \|D\Phi^{n}|_{E(x_{0})}\| \le A \lambda^{n} $ for some constant $ A>0 $ independent of $ x_{0} $ and $ n $. This means that the subbundle $ E $ is uniformly contracting. Now, using \eqref{eq:F-expansion} and the fact that $ 1/\alpha_{n}(x_{0}) $ is uniformly bounded by $ 1/\cos (\pi \lambda/2) $, we can conclude that
	\begin{equation*}
	\|D\Phi^{n}|_{E(x_{0})}\| \cdot \|D\Phi^{-n}|_{F(x_{n})}\| \le  \frac{A}{\cos(\pi \lambda/2)} \lambda^n \qquad \text{for } x_{0} \in \Sigma \text{ and } n \ge 1,
	\end{equation*}
i.e., the invariant set $ \Sigma $ has dominated splitting.

Finally, to prove (1) and (2), we just need to observe that the subbundle $ E $ is uniformly contracting, and that under the hypotheses of (1) and (2), the subbundle $ F $ is expanding and uniformly expanding, respectively.
\end{proof}

\subsection{Periodic orbits} 
\label{s:perorbits}

\begin{defn}
	We say that $ K \subset M $ is an \emph{attracting set} if $ K $ has a neighborhood $ U $ in $ M $ such that $ d(\Phi^{n}(x),K) \to 0 $ as $ n \to +\infty $ for every $ x \in U \setminus N^{+}_{\infty} $. 
\end{defn}

\begin{defn}
	We denote by $ \mathcal{P} $ the set of all periodic points of period two of the billiard map $ \Phi $. 
\end{defn}

\begin{proposition}
	\label{prop:periodic-points}
	The periodic points of $ \Phi $ have the following properties: 
	\begin{enumerate}
		\item every periodic point of period two is parabolic, 
		\item the set $ \PP $ is an attracting set,
		\item every periodic point of period greater than two is hyperbolic.
	\end{enumerate}
\end{proposition}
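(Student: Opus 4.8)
The plan is to compute the derivative of $\Phi^2$ at a periodic point $x_0$ of period two using formula \eqref{eq:dern}, and to exploit the special geometry of period-two orbits: such an orbit bounces back and forth between two sides $L_0$ and $L_1$ of $P$ along a segment orthogonal to both, so the bar-angles satisfy $\bar\theta_1 = \bar\theta_2 = 0$ and hence $\theta_0 = \theta_1 = 0$ as well (since $f(0)=0$). Plugging $\bar\theta_i = 0$ and $\theta_i = 0$ into $\alpha_2, \beta_2, \gamma_2$ gives $\alpha_2(x_0) = 1$, $\beta_2(x_0) = f'(0)^2$, and $\gamma_2(x_0) = t(x_0) + t(x_1) > 0$, so that
\[
D\Phi^2(x_0) = \begin{pmatrix} 1 & t(x_0)+t(x_1) \\ 0 & f'(0)^2 \end{pmatrix}.
\]
The eigenvalues are $1$ and $f'(0)^2 \in (0,1)$, with the eigenvalue $1$ in the horizontal direction. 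This is exactly a parabolic fixed point of $\Phi^2$ (one eigenvalue on the unit circle, the matrix not diagonalizable unless $f'(0)=\pm1$, which is excluded), proving (1). First I would make precise the claim that every period-two orbit is a normal segment between two sides; this is the standard fact that a period-two billiard trajectory must be perpendicular to both sides it hits, and here one must additionally check $L_0 \neq L_1$ and that the foot points lie in the interiors — but this is forced by $x_0 \in D_f$ (so $x_0 \notin N^+_\infty \cup N^-_\infty$).

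For (2), I would show $\PP$ attracts a neighborhood. The key point is that in a neighborhood of a period-two point, the dynamics of $\Phi^2$ looks like the model above: the $\theta$-coordinate contracts by a factor close to $f'(0)^2 < 1$ at each application, while the $s$-coordinate is governed by a map whose derivative in the invariant horizontal direction is the eigenvalue $1$ — but one must look at the next-order behavior. The cleaner route: since $\theta_n \to 0$ along any orbit that stays near $\PP$ (because each reflection multiplies the relevant angle by something of size $\le \lambda < 1$, by Lemma~\ref{le:bounded-angle} and the derivative formula), the trajectory becomes more and more nearly normal, hence $s_n$ converges. I would argue that the set of orbits converging to a normal segment is open, using that once $|\theta|$ is small the trajectory between two nearly-parallel sides $L_0, L_1$ stays near the normal segment and $|\theta|$ keeps shrinking geometrically, so $q_n$ converges to the foot of a normal segment, i.e.\ to a point of $\PP$. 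The main obstacle here is being careful about which sides are involved: near a period-two point the two sides $L_0, L_1$ are fixed, $\delta(L_0,L_1)$ is close to $\pi$, and \eqref{eq:angle-after} together with $f$ contracting gives $|\theta_{n+1}| \le \lambda(|\theta_n| + |\pi - \delta(L_0,L_1)|)$, which iterates to smallness — I would phrase this as a small invariant neighborhood argument.

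For (3), consider a periodic point $x_0$ of period $p > 2$, with orbit $x_0, \ldots, x_{p-1}$. By \eqref{eq:dern}, $D\Phi^p(x_0) = (-1)^p \begin{pmatrix} \alpha_p & \gamma_p \\ 0 & \beta_p \end{pmatrix}$ is upper triangular with eigenvalues $\pm\alpha_p(x_0)$ and $\pm\beta_p(x_0)$. We have $|\beta_p(x_0)| = \prod |f'(\bar\theta_i)| \le \lambda^p < 1$, so one eigenvalue is strictly inside the unit circle; hyperbolicity follows once we show $|\alpha_p(x_0)| \neq 1$, and in fact $\alpha_p(x_0) > 1$. By \eqref{eq:alpha-lambda}, $\alpha_p(x_0) = \frac{\cos\theta_0}{\cos\bar\theta_p}\Lambda_{p-1}(x_1)$, and since $x_0$ is periodic, $\theta_0 = \theta_p = f(\bar\theta_p)$, so $\frac{\cos\theta_0}{\cos\bar\theta_p} = \frac{\cos f(\bar\theta_p)}{\cos\bar\theta_p} = \rho(\bar\theta_p) \ge 1$; hence $\alpha_p(x_0) = \prod_{i=1}^{p}\rho(\bar\theta_i) \ge 1$, with equality iff $\bar\theta_i = 0$ for all $i$. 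It remains to exclude $\bar\theta_1 = \cdots = \bar\theta_p = 0$ for $p > 2$. If all $\bar\theta_i = 0$ then all $\theta_i = 0$, meaning every segment of the orbit is perpendicular to the side it hits at both ends; a sequence of segments each perpendicular to its endpoint-sides, returning to the start after $p$ steps, forces the trajectory to retrace itself (the reflected direction equals the reversed incoming direction), so the period is $\le 2$ — contradiction. Hence $\alpha_p(x_0) > 1$, giving a second eigenvalue strictly outside the unit circle, and $x_0$ is hyperbolic. The main obstacle across all three parts is the geometric lemma that a billiard orbit all of whose segments are normal to the boundary at both endpoints must have period two; I would prove this by observing that at a normal collision the reflection law fixes $\theta = 0$ and the outgoing ray is the reverse of the incoming ray, so the orbit is an oscillation along a single segment.
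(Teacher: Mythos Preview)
Your proposal is correct and follows essentially the same approach as the paper: both use the identity $\alpha_p(x_0)=\prod_{i=1}^{p}\rho(\bar\theta_i)$ at a periodic point, observe that this equals $1$ exactly when all $\bar\theta_i=0$ (forcing period two and giving parabolicity), and is strictly greater than $1$ otherwise (giving hyperbolicity for $p>2$); the attracting property of $\PP$ is handwaved in both places via the contraction of the $\theta$-coordinate. The only cosmetic differences are that the paper packages the estimate as $\zeta\le\alpha_n\le\zeta^n$ with $\zeta=\max_i\rho(\bar\theta_i)$ and then invokes Proposition~\ref{pr:dominated}(2), whereas you read off the two eigenvalues of $D\Phi^p$ directly, and you spell out the geometric lemma (all-normal reflections force period two) that the paper leaves implicit.
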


\begin{proof}
	Suppose that $ (s_{0},\theta_{0}) $ is a periodic point of $ \Phi $ of period $ n $. Since $ (s_{n},\theta_{n}) =  (s_{0},\theta_{0}) $, it follows immediately from \eqref{eq:alpha-lambda} that $ \alpha_{n}(s_{0},\theta_{0}) = \Lambda_{n}(s_{0},\theta_{0}) $. If we define $ \zeta = \max \{\rho(\bar{\theta}_{i}):i=1,\ldots,n\} $, then we see that $ \zeta \le \alpha_{n}(s_{0},\theta_{0}) \le \zeta^{n} $.

	Since a periodic point $ (s_{0},\theta_{0}) $ of period two corresponds to a billiard trajectory that hits perpendicularly two sides of $ P $, we have $ \theta_{0} = \theta_{1} = \theta_{2} = 0 $. Hence $ \zeta = 1 $, and so $ \alpha_{2}(s_{0},\theta_{0}) = 1 $. As $ (-1)^{n} \alpha_{n} $ is an eigenvalue of $ D \Phi^{n} $ (cf. \eqref{eq:dern}), all periodic points of period two are parabolic. Using the fact that $ f $ is a contraction, it is not difficult to show that the set of all periodic points of period two is an attracting set. 

Finally, we show that periodic points of period $ n > 2 $ are hyperbolic. These points have at least two collisions with a non-zero angle, and so $ \rho(\bar{\theta}_{i}) > 1 $ for at least two $ i $'s. If we set $ \mu = \zeta^{1/n} > 1 $, then $ \alpha_{n}(s_{0},\theta_{0}) > \mu^{n} $. The wanted result now follows from part~(2) of Proposition~\ref{pr:dominated}.
\end{proof}

\subsection{Uniform hyperbolicity}
\label{su:uni-hyp}
We now address the problem of the hyperbolicity of general invariant sets. Proposition~\ref{pr:delta-zero} below represents a considerable improvement of a result of Markarian, Pujals and Sambarino, stating that for any convex polygon without parallel sides, the map $ \Phi_{f} $ is hyperbolic provided that $ \lambda(f) $ is sufficiently close to zero \cite[Corollary 4 and Theorem 23]{markarian10}.

\begin{defn}
	\label{de:bounded}
We say that a forward invariant set $ \Omega \subset M $ has Property~(A) if there exists $ m>0 $ such that every sequence of consecutive collisions between parallel sides of $ P $ contained in $ \Omega $ consists of no more than $ m $ collisions.
\end{defn}

\begin{proposition}
	\label{pr:delta-zero}
	Suppose that $ \Sigma $ is an invariant set with Property~(A). Then $ \Sigma $ is uniformly hyperbolic.
\end{proposition}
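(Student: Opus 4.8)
The plan is to use Proposition~\ref{pr:dominated}(2): it suffices to find constants $A>0$ and $\mu>1$ such that $\alpha_n(x)\ge A\mu^n$ for all $x\in\Sigma$ and all $n\ge1$. Recall from \eqref{eq:alpha-lambda} that
\[
\alpha_n(s_0,\theta_0)=\frac{\cos\theta_0}{\cos\bar\theta_n}\,\Lambda_{n-1}(s_1,\theta_1),\qquad
\Lambda_{n-1}(s_1,\theta_1)=\prod_{i=1}^{n-1}\rho(\bar\theta_i),
\]
and that $\rho\ge1$ with $\rho(\theta)\ge r(\epsilon)>1$ whenever $|\theta|\ge\epsilon$. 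The prefactor $\cos\theta_0/\cos\bar\theta_n$ is harmless: by Lemma~\ref{le:bounded-angle} it is bounded below by $\cos(\pi\lambda/2)>0$ uniformly on $\Sigma$. So the whole problem reduces to bounding below the product $\prod_{i=1}^{n-1}\rho(\bar\theta_i)$, i.e. to showing that along any orbit segment in $\Sigma$ a definite fraction of the incidence angles $\bar\theta_i$ are bounded away from $0$.

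The key observation is a gap dichotomy for consecutive angles. If $x_i$ and $x_{i+1}$ are consecutive collisions on sides $L_i$ and $L_{i+1}$, then by \eqref{eq:angle-after} (applied with $f$ then affecting only the outgoing angle) the incidence angle at step $i+1$ satisfies $\bar\theta_{i+1}=\pi-\delta(L_i,L_{i+1})-\theta_i$, while $\theta_i=f(\bar\theta_i)$; hence
\[
\bar\theta_{i+1}+f(\bar\theta_i)=\pi-\delta(L_i,L_{i+1}).
\]
If $L_i$ and $L_{i+1}$ are \emph{not} parallel, then $\pi-\delta(L_i,L_{i+1})\neq0$ is a fixed nonzero constant determined by the polygon, and $f(\bar\theta_i),\bar\theta_{i+1}$ cannot both be small: there is $\epsilon_0>0$ (depending only on $P$ and on $\lambda(f)$, via the finitely many values $\pi-\delta(L_i,L_j)$ over non-parallel pairs) such that $\max\{|\bar\theta_i|,|\bar\theta_{i+1}|\}\ge\epsilon_0$. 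Thus whenever a reflection occurs between non-parallel sides, at least one of the two adjacent incidence angles is $\ge\epsilon_0$ in absolute value, so $\rho$ at that index is $\ge r(\epsilon_0)>1$.

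It remains to control stretches of collisions between parallel sides; this is exactly where Property~(A) enters. By hypothesis every maximal run of consecutive parallel–parallel reflections inside $\Omega\supset\Sigma$ — wait, more precisely inside the forward invariant set with Property~(A) containing the relevant orbit; since $\Sigma$ is invariant we may take $\Omega=\Sigma$ — has length at most $m$. Hence along any orbit segment $x_0,\dots,x_n$ the indices $i$ with $\rho(\bar\theta_i)\ge r(\epsilon_0)$ occur with positive density: between any two such "good" indices there are at most $2m+2$ (say) indices, because a run of $\le m$ parallel reflections is flanked by non-parallel reflections, each of which forces a good index nearby. Concretely, partitioning $\{1,\dots,n-1\}$ into blocks of length $2m+2$, each block contains at least one good index, giving
\[
\Lambda_{n-1}(s_1,\theta_1)\ge r(\epsilon_0)^{\lfloor (n-1)/(2m+2)\rfloor}\ge C\,\mu^n,\qquad \mu:=r(\epsilon_0)^{1/(2m+2)}>1,
\]
for a suitable constant $C>0$. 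Combining with the lower bound $\cos\theta_0/\cos\bar\theta_n\ge\cos(\pi\lambda/2)$ yields $\alpha_n(x)\ge A\mu^n$ on $\Sigma$ with $A:=C\cos(\pi\lambda/2)$, and Proposition~\ref{pr:dominated}(2) finishes the proof.

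The main obstacle is the bookkeeping in the third step: one must argue carefully that the absence of long parallel runs really does force a uniform positive density of large-angle collisions, i.e. pin down the right constant relating $m$ to the spacing of good indices, and check the boundary cases (an orbit segment that begins or ends inside a parallel run, or that consists entirely of such a run, which by Property~(A) can then have length at most $m$ and is handled trivially). One must also confirm that $\epsilon_0$ can be chosen uniformly: this uses that there are only finitely many sides, hence finitely many nonzero values $|\pi-\delta(L_i,L_j)|$ among non-parallel pairs, and that $|f(\theta)|\le\lambda(f)\,|\theta|<|\theta|$ keeps $f(\bar\theta_i)$ small when $\bar\theta_i$ is small, so the defect $\pi-\delta$ must be absorbed by $\bar\theta_{i+1}$.
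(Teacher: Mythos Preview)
Your proposal is correct and follows essentially the same route as the paper: reduce to $\alpha_n\ge A\mu^n$ via Proposition~\ref{pr:dominated}(2), control the prefactor by Lemma~\ref{le:bounded-angle}, use the angle relation~\eqref{eq:angle-after} to show that any consecutive pair at non-parallel sides forces $\max\{|\bar\theta_i|,|\bar\theta_{i+1}|\}\ge\epsilon_0$, and then invoke Property~(A) to get a positive density of such ``good'' indices. The paper carries out exactly this argument, only with a slightly sharper block length: it observes that in any window of $m+2$ consecutive collisions (rather than your $2m+2$) there must be a non-parallel pair, yielding $\mu=r(\epsilon)^{1/(m+2)}$; the paper also makes the dichotomy explicit by splitting on $|\theta_i|\gtrless\epsilon$ with $\epsilon<\Delta:=\tfrac12\min|\pi-\delta(L_i,L_j)|$, which dispatches the bookkeeping you flagged as the main obstacle.
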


\begin{proof}
Let $ \Delta = \min_{ij} \left|\pi - \delta(L_{i},L_{j})\right|/2 $, where the minimum is taken over all pairs of non-parallel sides $ L_{i} $ and $ L_{j} $ seeing each other. It is easy to check that $ \Delta > 0 $. 
	
Let $ \{(s_{n},\theta_{n})\}_{n \in \Zz} $ be an orbit contained in $ \Sigma $. Consider two consecutive collisions $ (s_{i},\theta_{i}) $ and $ (s_{i+1},\theta_{i+1}) $ at non-parallel sides of $ P $ . By \eqref{eq:angle-after}, we have 
\begin{equation*}
\bar{\theta}_{i+1} = \pi - \delta - \theta_{i}, 
\end{equation*}
where $ \delta $ is the angle formed by the sides of $ P $ containing $ s_{i} $ and $ s_{i+1} $. 
Fix $ 0 < \epsilon < \Delta $. If $ |\theta_{i}| \ge \epsilon $, then $ |\bar{\theta}_{i}| > \epsilon $ so that 
$ \rho(\bar{\theta}_{i}) \ge r(\epsilon) $. 
On the other hand, if $ |\theta_{i}| < \epsilon $, then $ |\bar{\theta}_{i+1}| > \pi - \delta - \epsilon > 2 \Delta - \epsilon > \epsilon $, and so $ \rho(\bar{\theta}_{i+1}) \ge r(\epsilon) $. Since $ \rho \ge 1 $, in both cases, we have
\begin{equation}
	\label{eq:pair}
 \rho(\bar{\theta}_{i}) \rho(\bar{\theta}_{i+1}) \ge r(\epsilon).
\end{equation}

By Property~(A), the maximum number of consecutive collisions occurring at parallel sides of $ P $ must be bounded above by $ m > 0 $. If $ P $ does not have pair of parallel sides, then we set $ m = 1 $. Now, consider a sequence of $ m+2 $ consecutive collisions $ (s_{j},\theta_{j}),\ldots,(s_{j+m+1},\theta_{j+m+1}) $. It is clear that such a sequence contain at least one pair of consecutive collisions occurring at sides that are not parallel. Hence, by using \eqref{eq:pair} and $ \rho \ge 1 $, we deduce that 
\[ \prod^{m+1}_{k=0} \rho(\bar{\theta}_{j+k}) \ge r(\epsilon).
\]
From this inequality, we easily obtain $ \Lambda_{n}(s_{1},\theta_{1}) \ge r(\epsilon)^{\frac{n-m-1}{m+2}} $ for all $ n \ge m+2 $. But $ \Lambda_{n} \ge 1 $ for every $ n \ge 1 $ and so
\begin{equation}
	\label{eq:lambda-uniform}
\Lambda_{n}(s_{1},\theta_{1}) \ge r(\epsilon)^{\frac{n-m-1}{m+2}} \qquad \text{for } n \ge 1.
\end{equation}
Define $ A = r(\epsilon)^{-1} \cos (\pi \lambda / 2) $ and $ \mu = r(\epsilon)^{1/(m+2)} $. Combining \eqref{eq:alpha-lambda} and \eqref{eq:lambda-uniform}, and using Lemma \ref{le:bounded-angle}, we obtain
\begin{equation*}
	\alpha_{n}(s_{0},\theta_{0}) \ge A \mu^{n} \qquad \text{for } n \ge 1.
\end{equation*}
Therefore, $ \Phi $ is uniformly expanding along the horizontal direction. By Part~(2) of Proposition \ref{pr:dominated}, it follows that $ \Sigma $ is uniformly hyperbolic.
\end{proof}

\begin{corollary}
	\label{co:delta-positive}
	Suppose that $ \PP = \emptyset $. Then $ D $ is uniformly hyperbolic.
\end{corollary}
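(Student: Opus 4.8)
The plan is to derive the corollary from Proposition~\ref{pr:delta-zero} by showing that, when $\PP=\emptyset$, the invariant (hence forward invariant) set $D$ has Property~(A). If $P$ has no pair of parallel sides there is nothing to prove (take $m=1$), so assume otherwise. The first step is to record the dynamics of a stretch of consecutive collisions alternating between two parallel sides $L$ and $L'$: since such sides face each other, $\delta(L,L')=\pi$, so \eqref{eq:angle-after} gives $\bar\theta_{i+1}=-\theta_i$ and hence $\theta_{i+1}=f(-\theta_i)$. Because $f\in\RR^k_1$ fixes $0$ with $\lambda<1$, the map $\theta\mapsto f(-\theta)$ is a contraction fixing $0$, so $|\theta_{i+1}|\le\lambda\,|\theta_i|$. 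Consequently a run contained in $D$ of $N+1$ collisions satisfies $|\theta_N|\le\lambda^N|\theta_0|<\tfrac{\pi\lambda}{2}\lambda^N$ by Lemma~\ref{le:bounded-angle}, and the $j$-th collision counted from the end has angle bounded by $\tfrac{\pi\lambda}{2}\lambda^{N-j}$.

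Next I would argue by contradiction. If $D$ fails Property~(A), then, $P$ having finitely many pairs of parallel sides, there is a fixed pair $(L,L')$ and, for each $m$, a point of $D$ whose orbit contains a run of at least $m$ consecutive collisions between $L$ and $L'$; let $z_m\in D$ be the last collision of such a run. Passing to subsequences: all $z_m$ lie on $L$; $|\theta(z_m)|\to0$ by the first step; $z_m\to z^*=(s^*,0)$ with $\zeta(s^*)\in\overline L$; and, for each fixed $j\ge0$, $\Phi^{-j}(z_m)$ has angle tending to $0$ and converges to a point $(s^*_{-j},0)$ with $\zeta(s^*_{-j})$ lying alternately on $\overline L$ and $\overline{L'}$, the segment from $\zeta(s^*_{-j})$ to $\zeta(s^*_{-j-1})$ being orthogonal to $L$ (it is a limit of genuine billiard segments contained in $\Int P$ and joining $\Int L$ to $\Int L'$). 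Consecutive such segments are collinear, so they all lie on one line orthogonal to $L$, meeting $\overline L$ and $\overline{L'}$ at points $p_L$ and $p_{L'}$ with $\dist(p_L,p_{L'})=\dist(L,L')$.

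If $p_L\in\Int L$ and $p_{L'}\in\Int L'$, then $\overline{p_Lp_{L'}}$ has its interior in $\Int P$ (else a nearby billiard segment would leave $P$), and bouncing back and forth along it is a genuine period-two orbit of $\Phi$, contradicting $\PP=\emptyset$. Otherwise $p_L$ (say) is a vertex $C$ of $P$, with interior angle $\alpha$ between $L$ and the other side $L''$ at $C$. Here I expect to conclude by the following dichotomy: for large $m$, $\Phi^{-2}(z_m)\in\Int L$ lies at distance $\delta_m\to0$ from $C$ and emits a nearly orthogonal trajectory, so if $\alpha<\pi/2$ this trajectory leaves the wedge at $C$ through $L''$ after height $O(\delta_m)$, forcing $\Phi^{-1}(z_m)$ to lie near $C$ instead of near $p_{L'}$ (which is at distance $\dist(L,L')>0$), a contradiction; while if $\alpha\ge\pi/2$ the trajectory emitted by $z_m$ reaches $L'$ near $p_{L'}\in\Int L'$, so $\Phi(z_m)$ lies on $L'$, contradicting the maximality of the run. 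In all cases $D$ has Property~(A), and Proposition~\ref{pr:delta-zero} yields that $D$ is uniformly hyperbolic.

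The part I expect to be the main obstacle is the last paragraph: ruling out the degenerate configurations in which the limiting orthogonal segment passes through a vertex of $P$ (in particular when both $p_L$ and $p_{L'}$ are vertices). This is exactly where one must exploit that the collisions lie in $D$, so that the approximating billiard segments avoid vertices and tangencies, together with the local form of $\Phi$ near a vertex of $P$.
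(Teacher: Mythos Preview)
Your plan---verify Property~(A) for $D$ and invoke Proposition~\ref{pr:delta-zero}---is exactly the paper's route. The paper's proof consists of the single sentence ``Since $\mathcal{P}=\emptyset$, we can apply Proposition~\ref{pr:delta-zero} to $D$,'' leaving the passage from $\mathcal{P}=\emptyset$ to Property~(A) entirely to the reader; your compactness argument is precisely this missing step, so you are elaborating the paper's argument rather than departing from it.

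The vertex obstruction you flag is real and is not addressed by the paper either. Your $\alpha<\pi/2$ case is correct as written. For $\alpha\ge\pi/2$ a cleaner way to organise the endgame is to take $z_m$ in the \emph{middle} of a run of length $\ge 2m+1$ (so that both $\Phi^{-1}(z_m)$ and $\Phi(z_m)$ lie on $L'$ and the ``maximality'' step disappears), and, in coordinates with $L$ on the $x$-axis and the limiting orthogonal at $x=0$, to split according to whether $\Int L$ and $\Int L'$ lie on the \emph{same} side of $x=0$ or on \emph{opposite} sides. In the same-side case the vertical through $z_m$ joins $\Int L$ to $\Int L'$ and, since for $f'>0$ the incoming and outgoing billiard segments at $z_m$ tilt to opposite sides of that vertical, it is sandwiched between two segments lying in $\Int P$; this produces a genuine period-two orbit and the desired contradiction, covering in particular the situation where only one foot is a vertex. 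In the opposite-side case (which forces both $p_L$ and $p_{L'}$ to be vertices), the sign alternation $\theta_{i+1}=f(-\theta_i)$ for $f'>0$ makes successive horizontal displacements add rather than cancel: once a segment carries $x>0$ on $L$ to $x<0$ on $L'$, one has $a_2=a_1-d\tan\theta_1<a_1<0$, so the trajectory cannot return to $\Int L$ and the run has length at most two. This disposes of all vertex cases when $f'>0$. For $f'<0$ the opposite-side case is genuinely more delicate (the $\theta_i$ keep their sign along a run and the displacements can cancel), so that case deserves separate attention.
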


\begin{proof}
Since $ \mathcal{P} = \emptyset $, we can apply Proposition~\ref{pr:delta-zero} to $ D $.
\end{proof}

All regular $ (2n+1) $-gons satisfy the hypothesis of Corollary~\ref{co:delta-positive}.

\begin{corollary}
	\label{co:odd-polygons}
	The set $ D $ is uniformly hyperbolic for every regular $ (2n+1) $-gon. 
\end{corollary}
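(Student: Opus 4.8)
The plan is to reduce the statement to Corollary~\ref{co:delta-positive} by showing that a regular $(2n+1)$-gon has no period-two orbit, i.e.\ that $\PP=\emptyset$, and then invoking that corollary directly.

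First I would recall why period-two orbits force the table to possess a pair of parallel sides. As observed in the proof of Proposition~\ref{prop:periodic-points}, a periodic point of period two corresponds to a billiard trajectory that travels back and forth along a single segment, hitting two sides of $P$ perpendicularly; in particular $\theta_{0}=\theta_{1}=\theta_{2}=0$. A segment meeting two lines at a right angle at both of its endpoints is a common perpendicular, and such a segment exists only when the two lines are parallel. Hence, if $P$ has no pair of parallel sides, then $\PP=\emptyset$.

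Next I would verify the elementary geometric fact that a regular $(2n+1)$-gon has no two parallel sides. Labelling the sides $L_{0},\dots,L_{2n}$ cyclically, the direction of $L_{k}$ makes angle $2\pi k/(2n+1)$ with that of $L_{0}$. Two sides $L_{k}$ and $L_{j}$ are parallel precisely when $2\pi(k-j)/(2n+1)$ is an integer multiple of $\pi$, that is, when $2n+1$ divides $2(k-j)$. Since $2n+1$ is odd, it is coprime to $2$, so this forces $2n+1\mid(k-j)$, which is impossible for $0<|k-j|<2n+1$. Therefore no two sides of a regular $(2n+1)$-gon are parallel.

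Combining the two observations, $\PP=\emptyset$ for every regular $(2n+1)$-gon, and Corollary~\ref{co:delta-positive} then yields that $D$ is uniformly hyperbolic. There is no genuine obstacle here: the substance of the argument lies entirely in Proposition~\ref{pr:delta-zero} and Corollary~\ref{co:delta-positive}, and the only point deserving a word of care is the claim that a common perpendicular to two sides forces those sides to be parallel, which is immediate from planar Euclidean geometry.
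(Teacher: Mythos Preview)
Your proposal is correct and follows exactly the route the paper takes: the paper simply remarks that ``all regular $(2n+1)$-gons satisfy the hypothesis of Corollary~\ref{co:delta-positive}'' and states the corollary without further argument. You have merely supplied the elementary details (no parallel sides, hence $\PP=\emptyset$) that the paper leaves implicit.
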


\begin{remark}
A necessary condition for $ \mathcal{P} \neq \emptyset $ is that $ P $ has a pair of parallel sides. This condition is clearly not sufficient. Fig.~\ref{starbilliard} displays an example of a polygon with parallel sides satisfying the hypothesis of Corollary~\ref{co:delta-positive}.
\end{remark}

\begin{figure}
  \begin{center}
    \includegraphics[width=2in]{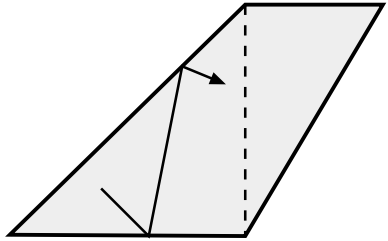}
  \end{center}
  \caption{A uniformly hyperbolic polygonal billiard with parallel sides.}
  \label{starbilliard}
\end{figure}

Even when $ \PP \neq \emptyset $, the map $ \Phi $ may still have uniformly hyperbolic sets. In Section~\ref{sec:rectangles}, we will show that this is the case for billiards in rectangles with proper contracting reflection laws (see Proposition~\ref{pr:uh-rect}).

\section{SRB measures for hyperbolic attractors}
\label{sec:SRB}

In this section we find criteria for the existence of SRB measures in polygonal billiards with a contracting reflection law. This is a consequence of a general result of Pesin that states the existence of such measures for piecewise smooth systems satisfying certain technical conditions. We show that those holds for the case of the billiards consider.


Recall that an invariant probability measure $ \mu $ of a diffeomorphism $g\colon U\to U$ on a Riemannian manifold $ U $ is called an \emph{SRB measure} (after Sinai, Ruelle and Bowen) if it is hyperbolic, and its conditional measures on unstable manifolds are absolutely continuous.
The relevance of such a measure is due to the fact that if $\mu$ is also ergodic, then there exists a positive Lebesgue measure set of points $ x \in U $ such that
$$
\lim_{n\to+\infty}\frac1n\sum_{i=0}^{n-1}\varphi(g^i(x))=\int_U\varphi\,d\mu
$$
for every continuous function $\varphi\colon U \to \Rr$. For the setting considered in this section, this property is proved in \cite[Theorem~3]{Pesin92}. We refer the reader to~\cite{Young02}, for a nice review on SRB measures.

\subsection{Piecewise smooth maps}\label{subsect pw smooth maps}

Let $U$ be a smooth Riemannian surface with metric $ \rho $ and area $ \nu $. Let $K$ be an open connected subset of $ U $ with compact closure. 

\begin{defn}
	We say that $ g $ is a \emph{piecewise smooth map} on $ K $
if there exist four constants $a_1,a_2\geq0$, $c_1,c_2>0$ and finitely many disjoint connected open sets $ \{K_{i}\} $ with boundary formed by a finite number of $ C^{1} $ curves such that $ \overline{K} = \bigcup_{i} \overline{K_{i}} $, $ g \colon \bigcup_{i} K_{i} \to U $  is injective, $ g|_{K_{i}} $ is a $ C^{2} $ diffeomorphism on its image satisfying
\begin{equation}\label{cdn H2}
\begin{split}
\|D^2g(x)\|
&\leq 
c_1 \rho(x,N)^{-a_1},
\quad
x\in K\setminus N, \\
\|D^2g^{-1}(x)\|
&\leq 
c_2 \rho(x,N')^{-a_2},
\quad
x\in K\setminus N',
\end{split}
\end{equation}
where $ N = \bigcup_{i} \partial K_{i} $ and $ N' = \partial(g(K \setminus N)) $.
\end{defn}

Next, we define
\begin{equation*}
K^+ =\{x\in K \colon g^n(x)\not\in N,n\geq0\},
\end{equation*}
and consider the invariant set 
$$
D={\bigcap_{n \ge 0} g^{n}(K^{+})}.
$$
We call the closure $A=\conj D$ an \emph{attractor} following the nomenclature of \cite{Pesin92}.
If $D$ is uniformly hyperbolic, then we refer to $A$ as a generalized hyperbolic attractor, or simply \emph{hyperbolic attractor}. The set $A$ is said to be \emph{regular} if 
$$
D_l^-=\bigcup_{0<\delta\leq1}\left\{x\in A\colon\rho(g^{-n}(x),N')\geq \delta e^{-n l},n\geq0\right\}
$$
is not empty for all sufficiently small $l>0$.
In particular, $A$ is regular if it contains a periodic orbit.
Moreover,~\cite[Proposition 3]{Pesin92} states that if there are $c>0$ and $q>0$ such that for every $n\geq0$ and every $\varepsilon>0$,
\begin{equation}\label{suf cdn reg}
\nu\left( K^+ \cap g^{-n}(N_\varepsilon)  \right) \leq c\,\varepsilon^{q},
\end{equation}
then $A$ is regular. Here $N_\varepsilon$ denotes the $\varepsilon$-neighborhood of $N$.

If $A$ is a regular hyperbolic attractor, then there is $l>0$ such that any $ x \in D^{-}:=D^{-}_{l} $
has local unstable manifold $W^u_{loc}(x)$ ~\cite[Proposition 4]{Pesin92}. The next theorem 
provides sufficient conditions for the existence of SRB measures supported on $ A $.

\begin{theorem}[\cite{Pesin92}]\label{Pesin criterium}
Suppose that
\begin{enumerate}
\item
$A$ is a regular hyperbolic attractor, 
\item
there exist $x\in D^{-}$ and $c,q,\varepsilon_0>0$ such that for $n\geq0$ and $0<\varepsilon<\varepsilon_0$, we have
\begin{equation}\label{Pesin cdn 3}
\ell\left( W_{loc}^u(x) \cap g^{-n}(N_\varepsilon)  \right) \leq c\,\varepsilon^{q},
\end{equation}
where $\ell$ is the length of a curve in $U$.
\end{enumerate}
Then $A$ has countably many ergodic SRB measures supported on $ D $.
\end{theorem}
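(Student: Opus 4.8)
This statement is Pesin's theorem, quoted from \cite{Pesin92}, so the proof in the paper will be a reference; what follows is the strategy one would carry out to establish it. The plan is the classical construction of SRB measures by pushing forward and time-averaging the length measure carried by an unstable manifold, using hypothesis~(2) to prevent the orbit of that measure from concentrating on the singular set $N$. Fix $x\in D^{-}$ and let $W=W^{u}_{loc}(x)$, which exists by hypothesis~(1) (see \cite{Pesin92}). Write $\ell_{W}$ for the normalized length measure on $W$ and set
\[
\mu_{n}=\frac1n\sum_{k=0}^{n-1}(g^{k})_{*}\ell_{W},\qquad n\ge1.
\]
Since $\overline K$ is compact, a subsequence $\mu_{n_{j}}$ converges weakly to a probability measure $\mu$ carried by $A=\conj D$.

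Next I would establish that $\mu$ is $g$-invariant. The only point that needs care, compared to the continuous case, is the discontinuity of $g$ along $N$: one must check that $\mu(N)=0$. This follows from hypothesis~(2), since the $\ell_{W}$-mass of $W$ mapped into $N_{\varepsilon}$ by any iterate is $O(\varepsilon^{q})$ uniformly in $n$, hence each $\mu_{n}$, and therefore $\mu$, gives mass $O(\varepsilon^{q})$ to $N_{\varepsilon}$; letting $\varepsilon\to0$ gives $\mu(N)=0$. With that, the standard Krylov--Bogolyubov argument applies off $N$ and yields invariance; the same control of the singular set, together with $W\subset D^{-}$, shows that $\mu$ is carried by points with bi-infinite orbits, i.e.\ $\mu(D)=1$.

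The core step is to show that the conditional measures of $\mu$ on unstable manifolds are absolutely continuous. On $W$ and each forward image $g^{k}(W)$, uniform hyperbolicity of $D$ gives uniform expansion in the unstable direction; together with the curvature/second-derivative bound \eqref{cdn H2} this yields a bounded-distortion estimate for the density of $(g^{k})_{*}\ell_{W}$ along unstable curves, with a distortion constant independent of $k$. The portions of $g^{k}(W)$ lying within $\varepsilon$ of $N$ have total $\ell$-mass $O(\varepsilon^{q})$ by hypothesis~(2) and contribute a negligible correction, so in the weak limit the conditional densities of $\mu$ on unstable plaques are bounded above and below by positive constants. Combined with hyperbolicity of $\mu$---immediate from uniform hyperbolicity of $D$, which forces nonzero Lyapunov exponents of both signs $\mu$-a.e.---this makes $\mu$ an SRB measure.

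Finally, one decomposes $\mu$ into ergodic components. Using absolute continuity of the stable holonomies and a Hopf-type argument, each ergodic component agrees mod~$0$ with the stable saturation of a union of unstable plaques and thus contains a nonempty open set up to a null set; since $K$ has a countable base, there are at most countably many pairwise disjoint such components, each an ergodic SRB measure supported in $D$. The main obstacle is the third step: quantitatively coupling the bounded-distortion estimates along unstable curves with the $O(\varepsilon^{q})$ control of hypothesis~(2), so as to guarantee that no mass of the limit measure hides on (or just off) the singular set in a way that would spoil absolute continuity of the conditionals. Once this is in hand, compactness, invariance, hyperbolicity, and countability of the ergodic components are comparatively routine.
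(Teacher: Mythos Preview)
Your opening sentence is exactly right: the paper does not prove this theorem at all---it is quoted as a black-box result from \cite{Pesin92}, with no proof given in the text. Your sketch of the Krylov--Bogolyubov construction with push-forwards of unstable length, the use of hypothesis~(2) to control mass near $N$, bounded distortion from \eqref{cdn H2}, and the Hopf-type countability argument is a reasonable outline of how Pesin's original proof proceeds, and you have correctly identified the delicate step as the interaction between distortion bounds and the singular set.
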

	
Under more restrictive conditions, which for polygonal billiard maps follow from Proposition~\ref{thm14 Pesin} below,
Sataev proved that the number of ergodic SRB measures is actually finite~\cite{Sataev92}. These measures have strong ergodic properties: for every ergodic SRB measure $ \mu $, there exist $ n>0 $ and a $ g^{n}$-invariant subset $ E $ with $ \mu(E)>0 $ such that $ (g^{n}|_{E},\mu|_{E}) $ is Bernoulli and has exponential decay of correlations, and the Central Limit Theorem holds for $ (g^{n}|_{E},\mu|_{E}) $ \cite{Afraimovich95,cz09,Young98}. Moreover, the Young dimension formula, relating the Hausdorff dimension of $ \mu $ to its Lyapunov exponents and entropy, holds true \cite{Sch}.
Notice also that under the conditions of the above theorem, the hyperbolic points are dense in the attractor~\cite[Theorem~11]{Pesin92}.

Known examples of the above systems are Lozi maps, Belykh maps and geometrical Lorenz maps \cite{ap87,b80,l78,y85}. We will show in the following that polygonal billiards with strongly contracting reflection laws are also of this type.

\subsection{SRB measures for polygonal billiards}
\label{sec: polygonal billiards}

Let $K=M$ for the polygonal billiard map $\Phi_f$ where $f$ is any reflection law, and $N=N_1^+$. We recall that $ N^{+}_{1} = \partial M \cup V \cup S^{+}_{1} $.
From Proposition~\ref{pr:prop-sing}, it follows that the set $ N^{+}_{1} $ divides $ M $ into pairwise disjoint open connected subsets $ M_{1},\ldots,M_{k} $, i.e., $ M \setminus N^{+}_{1} = \bigcup_{i} M_{i} $.
We denote by $ \Phi_{f,i} $ the restriction $ \Phi_f|_{M_{i}} $.

The map $\Phi_f$ for a contracting reflection law $f\in\RR_1^k$, $k\geq1$, is a $C^k$-diffeomorphism from $M\setminus N_1^+$ onto its image in $M$.

The set of points that can be infinitely iterated is $K^+=M\setminus N_\infty^+$ and let
$$
A=\conj{\bigcap_{n\geq0}\Phi_f^n(K^+)}.
$$
Notice that $A$ contains for example the period two orbits, if they exist. 

We say that a polygon has \emph{parallel sides facing each other} if there is a straight line joining orthogonally two of its sides. This is the same as the billiard map having a period two orbit (i.e. $\PP\not=\emptyset$).

\begin{proposition}\label{prop gen hyp att}
$A$ is a hyperbolic attractor iff the polygon does not have parallel sides facing each other.
\end{proposition}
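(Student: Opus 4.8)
The plan is to reduce the equivalence to results already proved, using the dictionary (recorded just before the statement) that ``$P$ has parallel sides facing each other'' is the same as ``$\Phi_f$ has a period-two orbit'', i.e. $\PP\neq\emptyset$. Since, by the nomenclature of Section~\ref{subsect pw smooth maps}, calling $A$ a hyperbolic attractor means precisely that the invariant core $D=\bigcap_{n\geq0}\Phi_f^n(K^+)$ is uniformly hyperbolic, the statement to be proved is simply: $D$ is uniformly hyperbolic $\iff$ $\PP=\emptyset$.

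For the implication $\Longleftarrow$, nothing new is needed: if $P$ has no parallel sides facing each other then $\PP=\emptyset$, and Corollary~\ref{co:delta-positive} already gives that $D$ is uniformly hyperbolic, so $A=\conj D$ is a hyperbolic attractor by definition. (That $A\neq\emptyset$, should one wish to record it, follows from the finite-intersection property applied to the nested compact sets $\conj{\Phi_f^n(K^+)}$.)

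For the implication $\Longrightarrow$ I would argue by contraposition: assuming $\PP\neq\emptyset$, fix $p\in\PP$ and show that $D$ cannot be uniformly hyperbolic. The first step is to check that $p\in D$. A period-two point corresponds to a trajectory bouncing perpendicularly between two parallel sides, so both points of the orbit have angle $\theta=0$ and arclength coordinate in the interior of a side; in particular the orbit meets neither $V$ nor any $S^{\pm}_n$, hence lies in $K^+=M\setminus N^+_\infty$, and being a finite $\Phi_f$-invariant set it lies in $\Phi_f^n(K^+)$ for every $n\geq0$, so $p\in D$. The second step is the computation of the derivative: by Proposition~\ref{prop:periodic-points}(1), or directly from \eqref{eq:dern} with $\theta_0=\theta_1=\theta_2=0$, the matrix $D\Phi_f^2(p)$ is upper triangular with diagonal entries $\alpha_2(p)=1$ and $\beta_2(p)$ satisfying $|\beta_2(p)|\leq\lambda^2<1$; thus the horizontal vector $e=(1,0)\in T_pM$ is a $D\Phi_f^2(p)$-eigenvector of eigenvalue $1$, whence $\|D\Phi_f^{2k}(p)\,e\|=1$ for all $k\geq1$. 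The third step: if $T_DM=E\oplus F$ were a uniformly hyperbolic splitting with constants $C>0$ and $0<\mu<1$, write $e=e_E+e_F$ with $e_E\in E(p)$, $e_F\in F(p)$; the uniform contraction of $E$ and the uniform expansion of $F$ under $D\Phi_f^{2k}(p)$ force $\|D\Phi_f^{2k}(p)\,e\|\to\infty$ if $e_F\neq0$, and $\|D\Phi_f^{2k}(p)\,e\|\to0$ if $e_F=0$, either way contradicting $\|D\Phi_f^{2k}(p)\,e\|\equiv1$. Hence $D$ is not uniformly hyperbolic, i.e. $A$ is not a hyperbolic attractor.

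The only point that is not pure bookkeeping is the verification that $p\in D$ --- that is, that the period-two orbit actually belongs to the invariant core and not merely to the attracting set $\PP$ --- which is where one must pay attention to the singular sets $N^{\pm}_\infty$. Everything else follows at once from the definitions and the triangular normal form of $D\Phi_f^n$ given in \eqref{eq:dern}.
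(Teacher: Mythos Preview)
Your proof is correct and follows essentially the same route as the paper's (two-sentence) argument. For $\Longleftarrow$ both you and the paper appeal to the uniform hyperbolicity result for polygons with $\PP=\emptyset$ (you cite Corollary~\ref{co:delta-positive}, the paper cites the underlying Proposition~\ref{pr:delta-zero}); for $\Longrightarrow$ the paper simply notes that period-two orbits are parabolic and lie in $A$, while you spell this out by checking $p\in D$ and exhibiting the neutral horizontal eigenvector of $D\Phi_f^2(p)$---a welcome clarification of a step the paper leaves implicit.
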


\begin{proof}
$A$ is a hyperbolic attractor iff it does not contain the parabolic orbits that are the period two orbits  (Proposition~\ref{pr:delta-zero}). 
These exist iff there are parallel sides facing each other.
\end{proof}

In the following we will prove the conditions of Theorem~\ref{Pesin criterium} for the billiard map $\Phi_f$.

A horizontal curve $\Gamma$ is a curve lying on a line of constant $\theta$ inside $ M_i$ for some $i$.
It is parametrized by a smooth path 
\begin{equation}\label{parametrization of Gamma}
\gamma\colon I\to M_i
\end{equation}
on an interval $I$ contained in $[0,1]$, satisfying $\gamma'(t)=(1,0)$, $t\in I$. 
So, $\ell(\Gamma)=|I|\leq 1$.

Given $n\geq1$, let $p(S_n^+)$ be the maximum number of smooth components of $S_n^+$ intersecting at one point.
Moreover, denote the smallest expansion rate of $\Phi_f^n$ along the unstable direction by 
$$
\alpha(\Phi_f^n)=\inf\limits_{x\in M\setminus N_n^+}\left\|D\Phi_f^n(x)\,(1,0) \right\|.
$$
Notice that $\alpha(\Phi_f^n)\geq \alpha(\Phi_f)^n$ and that $f\mapsto\alpha(\Phi_f)$ is a continuous map because $N_1^+$ does not depend on $f$.

To apply Theorem~\ref{Pesin criterium} to polygonal billiards one needs to prove ~\eqref{suf cdn reg} and ~\eqref{Pesin cdn 3}.
Both conditions follow from the conclusion~\eqref{H4} of Proposition~\ref{thm14 Pesin} below.
This proposition is the adaptation of~\cite[Theorem~14]{Pesin92} (see also ~\cite[Theorem~6.1]{Sch} and ~\cite[Section~7]{Young98}) to polygonal billiards.
This adaptation is necessary because billiards do not satisfy all conditions of the mentioned results (e.g. existence of smooth extensions) and verify trivially others (like bounded distortion estimates).

\begin{proposition}\label{thm14 Pesin}
If $m\geq1$ and
\begin{equation}\label{Property P}
p(S_m^+)<\alpha(\Phi_f^m),
\end{equation}
then there exist $c_m,\varepsilon_0>0$ such that for any horizontal curve $\Gamma \subset \tilde{D} $, $n\geq0$ and $0<\varepsilon<\varepsilon_0$,
\begin{equation}\label{H4}
\ell\left( \Gamma \cap \Phi_f^{-n}(N_\varepsilon)  \right) \leq c_m\,\varepsilon.
\end{equation}
\end{proposition}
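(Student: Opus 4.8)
The plan is to estimate the length of $\Gamma\cap\Phi_f^{-n}(N_\varepsilon)$ by tracking how horizontal curves are expanded, cut, and distributed among the cells $M_1,\dots,M_k$ under iteration. First I would reduce to controlling preimages of a single smooth component of $N_1^+$: since $N_n^+=\bigcup_{j=0}^{n-1}\Phi_f^{-j}(N_1^+)$, we have $\Gamma\cap\Phi_f^{-n}(N_\varepsilon)\subset\bigcup_{j=0}^{n-1}\Gamma\cap\Phi_f^{-j}((N_1^+)_\varepsilon)$ up to a harmless enlargement of the neighborhood (using that $\Phi_f$ and its iterates are Lipschitz on each cell, with constants controlled by $\lambda$ and $\operatorname{diam}P$ via \eqref{eq:dern} and Lemma~\ref{le:bounded-angle}, on the invariant region $\tilde D$). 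So it suffices to bound $\sum_{j\geq0}\ell(\Gamma\cap\Phi_f^{-j}((N_1^+)_\varepsilon))$ by $c_m\varepsilon$, uniformly in $n$.

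The key mechanism is the following. The image $\Phi_f(\Gamma)$ of a horizontal curve $\Gamma\subset\tilde D$ is a strictly monotone curve (by Propositions~\ref{pr:prop-sing} and~\ref{pr:singminus}, the singular curves are monotone, and from \eqref{eq:dern} the image of a horizontal segment under $\Phi_f$ has slope controlled away from horizontal and vertical); more importantly, the preimage $\Phi_f^{-1}(\Gamma')$ of a horizontal curve $\Gamma'$, intersected with a single cell $M_i$, is again (a finite union of) horizontal curves, because $\Phi_f$ maps $\theta=\mathrm{const}$ to... — here I would instead run the estimate forward: a horizontal curve $\Gamma$ intersects at most $p(S_1^+)$ of the cells $M_i$ nontrivially along its length? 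No — the right bookkeeping is that $\Gamma$ is cut by $N_1^+$ into at most $p(S_1^+)+1$ pieces meeting any given point, and each piece maps to a monotone curve whose horizontal projection is expanded by at least $\alpha(\Phi_f)$. Iterating, $\Gamma\cap\Phi_f^{-n}(\cdot)$ decomposes into pieces indexed by admissible itineraries; along each piece $\Phi_f^n$ expands the $s$-length by at least $\alpha(\Phi_f^n)$, while the number of pieces crossing a neighborhood of a single point of $N_1^+$ after $n$ steps grows at most like $p(S_n^+)^{?}$. Grouping the iteration into blocks of length $m$ and using hypothesis \eqref{Property P}, $p(S_m^+)<\alpha(\Phi_f^m)$, the number of surviving pieces times the reciprocal of the expansion forms a convergent geometric series with ratio $p(S_m^+)/\alpha(\Phi_f^m)<1$. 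Summing the geometric series gives the bound $c_m\varepsilon$, with $c_m$ depending on $m$, on $r(\epsilon)$-type constants, and on $\operatorname{diam}P$, $\lambda$.

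More precisely, the core combinatorial estimate I would establish is: for each $j\geq0$,
\[
\ell\bigl(\Gamma\cap\Phi_f^{-j}((N_1^+)_\varepsilon)\bigr)\leq C\,\bigl(p(S_m^+)\bigr)^{\lfloor j/m\rfloor}\,\alpha(\Phi_f^m)^{-\lfloor j/m\rfloor}\,\varepsilon,
\]
obtained by pushing $\Gamma$ forward $j$ steps: its forward image is a union of monotone curves of total $s$-length at least $\alpha(\Phi_f^j)\,\ell(\Gamma\cap\Phi_f^{-j}(\cdot))$, lying inside a union of $\varepsilon$-neighborhoods of the finitely many analytic components of $N_1^+$; since each such component is a graph over an $s$-interval, the total length of a union of at most $p(S_j^+)$-fold-overlapping monotone curves inside its $\varepsilon$-tube is $O(\varepsilon)$ (this is where monotonicity and the transversality clauses~(3)–(4) of Proposition~\ref{pr:prop-sing} are used, together with $p(S_j^+)\leq p(S_m^+)^{\lceil j/m\rceil}$). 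Then summing over $j\geq0$ yields $\sum_j C(p(S_m^+)/\alpha(\Phi_f^m))^{\lfloor j/m\rfloor}\varepsilon\leq c_m\varepsilon$, finite and independent of $n$ by \eqref{Property P}.

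The main obstacle is the uniform control of the number of overlapping monotone pieces of $\Phi_f^{-j}(N_1^+)$ that can cross a single $\varepsilon$-ball, i.e.\ justifying the submultiplicativity $p(S_j^+)\leq p(S_m^+)^{\lceil j/m\rceil}$ and turning it into a genuine length estimate rather than a crude count — one must ensure that the pieces, being uniformly monotone (slopes bounded away from $0$ and $\infty$ on $\tilde D$), cannot accumulate length inside a thin tube beyond their number times $\varepsilon$, and that the forward images $\Phi_f^j(\Gamma)$ stay monotone with uniformly bounded slopes so that $s$-length and arclength are comparable. This requires the geometry of $S_1^+$ from Proposition~\ref{pr:prop-sing} and the derivative bounds from \S\ref{sec: derivative bill map}, combined with the fact that on $\tilde D$ all quantities $\cos\theta_i$, $\cos\bar\theta_i$ are bounded below by $\cos(\pi\lambda/2)>0$, which makes the distortion and slope estimates uniform.
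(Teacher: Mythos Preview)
Your proposal has a genuine gap, and also a misreading at the start.

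\textbf{The misreading.} The set $N_\varepsilon$ in the statement is the $\varepsilon$-neighbourhood of $N_1^+$ (not of $N_n^+$), so there is nothing to decompose as $\bigcup_{j=0}^{n-1}\Phi_f^{-j}((N_1^+)_\varepsilon)$. You are asked to bound $\ell(\Gamma\cap\Phi_f^{-n}(N_\varepsilon))$ for each fixed $n$, uniformly in $n$; summing over $j$ is stronger than needed and your reduction step is simply unnecessary.

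\textbf{The gap.} Your ``core combinatorial estimate'' asserts that the forward image $\Phi_f^j(\Gamma)$, restricted to $N_\varepsilon$, consists of pieces that are at most $p(S_j^+)$-fold overlapping, and you then invoke submultiplicativity $p(S_j^+)\le p(S_m^+)^{\lceil j/m\rceil}$. But $p(S_j^+)$ is the \emph{local} complexity --- the maximal number of smooth components of $S_j^+$ through a single point --- whereas what enters your length bound is the \emph{global} number of pieces of $\Gamma\setminus N_j^+$ (equivalently, the number of smooth components of $\Phi_f^j(\Gamma)$). A horizontal curve of unit length can be cut by $S_j^+$ into far more than $p(S_j^+)$ pieces; the local complexity bounds how many pieces a \emph{short} curve (shorter than some scale $C_m$) breaks into, not a long one. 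Your argument gives no control on the total number of pieces, so the factor $p(S_m^+)^{\lfloor j/m\rfloor}$ in your displayed estimate is unjustified.

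\textbf{What the paper does.} The paper runs a growth-lemma argument. Writing $n=sm+s'$ and $\psi=\Phi_f^m$, it tracks the pieces $\Upsilon_{k,l}$ of $\psi^k(\Gamma)$ and, for each piece at level $s$, records the \emph{last} time $k$ at which its ancestor had length at least a fixed scale $d=\min\{C_m,\ell(\Gamma)\}$. From that time on the piece is short, so under each further application of $\psi$ it splits into at most $p=p(S_m^+)$ subpieces; this gives $\#I_{k,l}\le p^{s-k-1}$. On the other hand, the preimages in $\Gamma$ of the long pieces $\Upsilon_{k,l}$ at a fixed level $k$ are disjoint, so their total length is at most $\ell(\Gamma)$. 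Combining the two controls, one sums a geometric series in $(p/\alpha)^{s-k}$ with $\alpha=\alpha(\psi)>p$, and obtains a bound independent of $n$. Two polygonal-billiard-specific facts are used and you should note them: forward images of horizontal curves are again \emph{horizontal} (the $(2,1)$ entry of $D\Phi_f$ vanishes, cf.\ \eqref{eq:dern}), not merely monotone; and $\|D\psi^k(x)(1,0)\|$ is \emph{constant} along each preimage piece, so there is no distortion to estimate --- this is what replaces the bounded-distortion hypothesis in the general Pesin/Young framework.
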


\begin{proof}
Notice that by considering $\varepsilon_0<(1-\lambda)\pi/2$, the orbit of any point never enters the $\varepsilon_0$-neighborhood of $\partial M$. So $ N_{\varepsilon} $ in  Condition~\eqref{H4} can be safely replaced by the $ \varepsilon $-neighborhood of $ V \cup S^{+}_{1} $. Accordingly, throughout this proof, the symbols $ N^{+}_{1} $ and $ N_{\varepsilon} $ have to be understood as $ V \cup S^{+}_{1} $ and the $ \varepsilon $-neighborhood of $ V \cup S^{+}_{1} $, respectively.

We have that $G_r=\Gamma\cap N_r^+$ is finite for any $r\geq0$, where we set $N_0^+=\emptyset$. 
Indeed, after one iteration of the billiard map only a finite number of points in the horizontal curve $\Gamma$ reach a corner, and the same after $r$ iterations for each of the components of the image of $\Gamma$.
So, $\ell(\Gamma)=\ell(\Gamma\setminus G_r)\leq1$ and
\begin{equation*}
\begin{split}
\ell\left( \Gamma \cap \Phi_f^{-r}(N_\varepsilon)  \right) 
&= 
\ell\left( (\Gamma\setminus G_r) \cap \Phi_f^{-r}(N_\varepsilon)  \right) \\
&=\ell \left( \Phi_f^{-r} \left( \Phi_f^r(\Gamma\setminus G_r) \cap N_\varepsilon  \right)\right).
\end{split}
\end{equation*}

The $r$-th iterate of $\Gamma\setminus G_r$ is the union of a finite number of smooth curves (each one diffeomorphic to a component of $\Gamma\setminus G_r$) given by
$$
\Phi_f^r(\Gamma\setminus G_r)=\bigcup_{i\in J_r}\Gamma_{r,i},
$$
where $J_r$ is finite.
We have that $\Gamma_{r,i}=\Phi_{f}(\Gamma_{r-1,j}\cap M_l)$ for some $j,l$ depending on $i,r$ and $j\in J_{r-1}$. Notice that the set $J_{r}$ does not have more elements than $J_{r+1}$, and that $\Gamma_{0,i}=\Gamma$ where $i\in J_0=\{1\}$.

Fix $n=sm+s'$ where $s\geq0$ and $s'\in\{0,\dots,m-1\}$.
Since all $\widetilde\Gamma_{n,i}=\Gamma_{n,i}\cap N_\varepsilon$ are disjoint,
\begin{equation*}
\ell\left(\Gamma\cap \Phi_f^{-n}(N_\varepsilon) \right)
=
\ell\left(\Phi_f^{-sm}\circ \Phi_f^{-s'}\left(\bigcup_{i\in J_n} \widetilde\Gamma_{n,i}\right) \right)
=
\sum_{i\in J_{n}}\ell\left(\Phi_f^{-sm}(\widetilde\Gamma'_{sm,i})\right) 
\end{equation*}
where $\widetilde\Gamma'_{sm,i}=\Phi_f^{-s'}(\widetilde\Gamma_{n,i})$.

From the transversality between $N_1^+$ and the horizontal direction given by Proposition~\ref{pr:prop-sing},
there is $C'>0$ (independent of $n$ and $\Gamma$) satisfying the inequality
$
\ell(\widetilde\Gamma_{n,i}) \leq C'\varepsilon.
$
Let $\eta$ be the maximum number of intersections between a horizontal line in the phase space $M$ and $N_1^+$.
Given $j\in J_{sm}$ denote by $J_{n,j}$ the set of all $i\in J_n$ satisfying $\widetilde\Gamma'_{sm,i}\subset \Gamma_{sm,j}$.
Thus, $\# J_{n,j}\leq (\eta+1)^{s'}$.
Notice that $\cup_{j\in J_{sm}}J_{n,j}=J_n$.
Furthermore, 
\begin{equation}\label{estimate Lambda km}
\begin{split}
\sum_{i\in J_{n,j}} \ell(\widetilde\Gamma'_{sm,i})
\leq 
\sum_{i\in J_{n,j}} \frac{\ell(\widetilde\Gamma_{n,i})}{\alpha(\Phi_f^{s'})} 
\leq
\frac{(\eta+1)^{m} C'\,\varepsilon}{\min\limits_{s''<m}\alpha(\Phi_f^{s''})}
=C'_m\varepsilon.
\end{split}
\end{equation}

Notice that the definition of $p=p(S_m^+)$ is equivalent to saying that it is the smallest positive integer for which there is $C_m>0$ such that if $\Gamma$ is a horizontal curve with $\ell(\Gamma)<C_m$, then $\Gamma\setminus S_m^+$ has at most $p$ components with positive length.

Let $d=\min\{C_m,\ell(\Gamma)\}$.
Write $\psi=\Phi_f^{m}$ and $\Upsilon_{r,i}=\Gamma_{rm,i}$ with $i\in J_r$ and $0\leq r\leq s$. 

Take any $0\leq k\leq s$ and $l\in J_{km}$ such that $\ell(\Upsilon_{k,l})\geq d$, denote the set of these pairs $(k,l)$ by $\II_{s}$ and the set of indices 
$$
I(k,l)=\{i\in J_{sm}\colon \psi^{-(s-k)}(\Upsilon_{s,i})\subset \Upsilon_{k,l}\}.
$$
Now, define
$$
I_{k,l}=I(k,l)\setminus\bigcup_{(r,j)\in\II_s,r>k}I(r,j).
$$
It is now easy to check that the sets $I_{k,l}$ are disjoint, their union is $J_{sm}$ and $\#I_{k,l}\leq p^{s-k-1}$.
Therefore,
$$
J_n=\bigcup_{(k,l)\in \II_{s}}\bigcup_{j\in I_{k,l}}J_{n,j}.
$$

For each $(k,l)\in\II_{s}$ define
$$
\Lambda_{k,l}=
\bigcup_{j\in I_{k,l}} 
\bigcup_{i\in J_{n,j}}
\psi^{-(s-k)}(\widetilde\Gamma'_{sm,i}) 
\subset
\Upsilon_{k,l}.
$$
Notice that this is a disjoint union, hence
\begin{equation*}
\begin{split}
\sum_{i\in J_{n}} \ell\left(\Phi_f^{-sm}(\widetilde\Gamma'_{sm,i})\right)
&\leq
\sum_{(k,l)\in\II_{s}}
\sum_{j\in I_{k,l}}
\sum_{i\in{J_{n,j}}} 
\ell\left(\psi^{-s}(\widetilde\Gamma'_{sm,i})\right) \\
&=
\sum_{(k,l)\in\II_{s}} \ell\left(\psi^{-k}(\Lambda_{k,l})\right).
\end{split}
\end{equation*}

Each $\Upsilon_{k,l}$ is parametrized by the path $\gamma_{k,l}(t)=\psi^{k}\circ\gamma(t)$ with $\gamma_{k,l}'(t)= D\psi^{k}\circ\gamma(t)\, (1,0)$ and
\begin{equation*}
\ell(\Upsilon_{k,l}) = \int_{\gamma_{k,l}^{-1}(\Upsilon_{k,l}) } \|\gamma_{k,l}'(t)\|\, dt  =
\|\gamma_{k,l}'(\xi)\| \, \ell\left(\psi^{-k}(\Upsilon_{k,l}) \right),
\end{equation*}
where $\xi\in \gamma_{k,l}^{-1}(\Upsilon_{k,l}) \subset[0,1]$.
Similarly,
$
\ell(\Lambda_{k,l}) =
\|\gamma_{k,l}'(\xi')\| \, \ell\left(\psi^{-k}(\Lambda_{k,l})\right),
$
for some $\xi'\in \gamma_{k,l}^{-1}(\Lambda_{k,l})$.
So,
\begin{equation}\label{length relation}
\ell\left(\psi^{-k}(\Lambda_{k,l})\right) = 
\ell(\Lambda_{k,l}) \,
\frac{\|\gamma_{k,l}'(\xi)\|} {\|\gamma_{k,l}'(\xi')\|} \,
\frac{\ell\left(\psi^{-k}(\Upsilon_{k,l})\right)}{\ell( \Upsilon_{k,l})}.
\end{equation}
Following Section~\ref{sec: derivative bill map}, we have that $\|D\psi^{k}(x)\,(1,0)\|$ is constant in $\psi^{-k}(\Upsilon_{k,l})\subset\Gamma$. This proves that
$
\|\gamma_{k,l}'(\xi)\|= \|\gamma_{k,l}'(\xi')\|.
$
Moreover, from~\eqref{estimate Lambda km}, writing $\alpha=\alpha(\psi)$ which is larger than $p$ by~\eqref{Property P},
$$
\ell(\Lambda_{k,l}) 
\leq 
\sum_{j\in I_{k,l}} 
\sum_{i\in J_{n,j}} 
\frac{\ell(\widetilde\Gamma'_{sm,i})}{\alpha^{s-k}}
\leq C_m'\varepsilon \left(\frac{p}\alpha\right)^{s-k}.
$$

Finally, from $\ell(\Upsilon_{k,l})\geq d$ when $(k,l)\in\II_{s}$, using~\eqref{length relation},
$$
\sum_{(k,l)\in\II_{s}} \ell\left(\psi^{-k}(\Lambda_{k,l})\right)
\leq
\frac{C_m'\varepsilon}{d} \sum_{k=0}^s\left(\frac{p}\alpha\right)^{s-k} 
\sum_l \ell\left(\psi^{-k}(\Upsilon_{k,l})\right).
$$
Furthermore, $\sum_l \ell\left(\psi^{-k}(\Upsilon_{k,l})\right)\leq \ell\left(\psi^{-k}(\cup_l\Upsilon_{k,l})\right)=\ell(\Gamma)$.

All the above estimates imply that
$$
\ell\left(\Gamma \cap \Phi_f^{-n}(N_\varepsilon)\right)
\leq
\frac{C_m'\alpha\ell(\Gamma)}{d(\alpha-p)}\,\varepsilon.
$$
The fact that $d^{-1}\ell(\Gamma)$ is equal to 1 if $\ell(\Gamma)\leq C_m$ and otherwise bounded from above by $C_m^{-1}$ because $\ell(\Gamma)\leq 1$, proves~\eqref{H4}.
\end{proof}


Denote by $\BB$ the subset of contractive $C^2$ reflection laws $f$ in $\RR_1^2$ such that $f$ and its inverse $f^{-1}$ have bounded second derivatives.

\begin{theorem}\label{thm bill p<a}
Consider a polygon without parallel sides facing each other and suppose that $f\in\BB$.
If there is $m\geq1$ such that
$$
p(S_m^+)<\alpha(\Phi_f^m),
$$
then $A$ has countably many ergodic SRB measures.
\end{theorem}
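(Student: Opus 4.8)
The plan is to verify the two hypotheses of Theorem~\ref{Pesin criterium} for the billiard map $\Phi_f$ with $K = M$, $N = N_1^+$, and then invoke that theorem. Since the polygon has no parallel sides facing each other, Proposition~\ref{prop gen hyp att} already gives that $A$ is a hyperbolic attractor; moreover $A$ is regular because, by Proposition~\ref{pr:delta-zero} (applied to $D$, which is uniformly hyperbolic here since $\PP=\emptyset$) and the density of hyperbolic behavior, $A$ contains periodic orbits, and it was noted after the definition of regularity that an attractor containing a periodic orbit is regular. So condition~(1) of Theorem~\ref{Pesin criterium} holds. Alternatively, one shows~\eqref{suf cdn reg} directly from~\eqref{H4}: cover $K^+$ by finitely many horizontal curves (foliating $\tilde D$, which contains $D$ by Lemma~\ref{le:bounded-angle}), apply Fubini, and use $\ell(\Gamma\cap\Phi_f^{-n}(N_\varepsilon))\le c_m\varepsilon$ to get $\nu(K^+\cap\Phi_f^{-n}(N_\varepsilon))\le c\,\varepsilon$, which is~\eqref{suf cdn reg} with $q=1$.

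The heart of the argument is condition~(2): I need a point $x\in D^-$ whose local unstable manifold $W^u_{loc}(x)$ satisfies the growth estimate~\eqref{Pesin cdn 3}. The key observation is that, by the structure of $D\Phi_f$ worked out in Section~\ref{sec: derivative bill map} (the derivative is upper triangular, with the horizontal direction $(1,0)$ being the unstable direction up to the bounded correction coming from $\gamma_n/\alpha_n$), unstable manifolds are graphs over the $s$-axis with bounded, indeed small, slope; in particular $W^u_{loc}(x)$ is $C^1$-close to a horizontal curve, and it lies inside $\tilde D$. Therefore the estimate~\eqref{H4} of Proposition~\ref{thm14 Pesin}, proved for genuine horizontal curves, transfers to $W^u_{loc}(x)$: one either re-runs the proof of Proposition~\ref{thm14 Pesin} verbatim with "horizontal curve" replaced by "unstable curve in $\tilde D$" (the only place horizontality was used was the uniform transversality to $N_1^+$ and the fact that $\|D\psi^k(1,0)\|$ is constant along the curve — transversality survives a small $C^1$-perturbation by Proposition~\ref{pr:prop-sing}, and along an unstable curve the expansion factor is comparable to $\alpha_k$ with bounded distortion), or one simply notes $W^u_{loc}(x)$ can be covered by finitely many arcs each of which maps forward comparably to a horizontal curve. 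Either way one gets $\ell(W^u_{loc}(x)\cap\Phi_f^{-n}(N_\varepsilon))\le c\,\varepsilon$, which is exactly~\eqref{Pesin cdn 3} with $q=1$.

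To apply Proposition~\ref{thm14 Pesin} I must produce an $m\ge1$ with $p(S_m^+)<\alpha(\Phi_f^m)$; but this is precisely the hypothesis of the theorem being proved, so nothing further is needed there. The role of the hypothesis $f\in\BB$ (bounded second derivatives of $f$ and $f^{-1}$) is to ensure $\Phi_f$ is a piecewise smooth map in the sense of the definition in Section~\ref{subsect pw smooth maps}: the bound~\eqref{cdn H2} on $\|D^2\Phi_f\|$ and $\|D^2\Phi_f^{-1}\|$ with the right power of the distance to the singular set follows from the explicit formula for $D\Phi_f$ together with the $\cos\bar\theta_1$ in the denominator (which vanishes to first order at the part of $N_1^+$ coming from tangential collisions) and the boundedness of $f''$, $(f^{-1})''$; this is the standard billiard estimate, here made clean because the reflection contracts angles. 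Once $\Phi_f$ is piecewise smooth, $D$ is uniformly hyperbolic (Corollary~\ref{co:delta-positive}), $A$ is a regular hyperbolic attractor, and conditions~(1)--(2) of Theorem~\ref{Pesin criterium} hold, so the theorem yields countably many ergodic SRB measures supported on $D$.

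The main obstacle is the transfer of estimate~\eqref{H4} from horizontal curves to unstable manifolds: one must check carefully that the combinatorial counting in the proof of Proposition~\ref{thm14 Pesin} (the bounds $\#J_{n,j}\le(\eta+1)^{s'}$ and $\#I_{k,l}\le p^{s-k-1}$, and the crucial cancellation $\|\gamma_{k,l}'(\xi)\|=\|\gamma_{k,l}'(\xi')\|$) is robust under replacing the flat initial curve by a curve that is only $C^1$-close to horizontal — the cancellation is no longer exact but is replaced by a bounded distortion factor, which is harmless, while the transversality constants $\eta$ and $C'$ must be taken slightly larger to accommodate the perturbation. This is routine but is where the real work lies; everything else is bookkeeping and appeals to results already established.
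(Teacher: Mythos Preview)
Your overall strategy matches the paper's: verify that $\Phi_f$ is a piecewise smooth map in Pesin's sense, that $A$ is a regular hyperbolic attractor, and that condition~\eqref{Pesin cdn 3} holds, then invoke Theorem~\ref{Pesin criterium}. The paper handles~\eqref{cdn H2} more cleanly than you do, by factoring $\Phi_f = R_f \circ \bar\Phi$, citing the standard estimate $a_1=a_2=3$ for the conservative map $\bar\Phi$, and noting that $R_f$ and $R_f^{-1}$ have bounded second derivatives precisely because $f\in\BB$; and it derives regularity via~\eqref{suf cdn reg} from~\eqref{H4} by the Fubini argument you mention as your alternative.

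Where you diverge from the paper is in the treatment of condition~(2), and here you are making your life harder than necessary because of a small confusion. You write that $W^u_{loc}(x)$ is ``$C^1$-close to a horizontal curve'' with ``small slope'', the deviation coming from ``the bounded correction $\gamma_n/\alpha_n$''. But look again at Proposition~\ref{pr:dominated}: the correction $\gamma_n/\alpha_n$ enters the construction of the \emph{stable} subbundle $E$, not the unstable one. The unstable subbundle is $F(x)=\{u:u_\theta=0\}$, the horizontal direction, \emph{exactly} and at every point. Since $W^u_{loc}(x)$ is tangent to $F$ at each of its points, it is literally a horizontal segment contained in $\tilde D$. Hence~\eqref{Pesin cdn 3} is not merely analogous to~\eqref{H4} but is a special case of it, and the paper accordingly says it follows ``directly''. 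Your bounded-distortion transfer argument is not wrong, but it is solving a problem that does not arise here; the identity $\|\gamma'_{k,l}(\xi)\|=\|\gamma'_{k,l}(\xi')\|$ in the proof of Proposition~\ref{thm14 Pesin} holds exactly for the same reason, and no perturbation of the constants $\eta$, $C'$ is needed.
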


\begin{proof}	
First, notice that by Proposition~\ref{prop gen hyp att} $A$ is a generalized hyperbolic attractor.
Moreover, the billiard map $\Phi_f$ satisfies conditions~\eqref{cdn H2}. This is a consequence of the following facts: $\Phi_f=R_f\circ \bar\Phi$, the conservative billiard map $\bar\Phi$ verifies conditions~\eqref{cdn H2} with $a_1=a_2=3$ (see~\cite{KS86}) and $R_f$ has bounded second derivatives. 

Now, the existence of countably many ergodic SRB measures follows from Theorem~\ref{Pesin criterium} provided that~\eqref{suf cdn reg} and~\eqref{Pesin cdn 3} hold. Both conditions follow from \eqref{H4}; the second one directly, whereas the first one after decomposing $K^+$ into horizontal curves.  
\end{proof}

\begin{remark}
In fact, \eqref{H4} allows us to use a result by Sataev~\cite[Theorem~5.15]{Sataev92}, and conclude that the number of ergodic SRB measures is finite.
\end{remark}

\begin{remark}
The conditional measures of the SRB measures in Theorem~\ref{thm bill p<a} coincide up to a normalizing constant factor with the measure induced by the Riemannian metric on the unstable manifolds. This can be deduced easily from the computations in the proof of \cite[Proposition~7]{Pesin92}. In particular, note that the function $ \kappa $ in that proof is identically equal to one for our billiards.
\end{remark}

\subsection{Sufficient conditions}

A convex polygonal billiard corresponds to $p(S_1^+)=2$, because in this case $S_1^+$ is a disjoint union of smooth curves. 
A non-convex polygonal billiard yields instead $p(S_1^+)\geq2$.

Every $\Phi_{f,i}$ admits a continuous extension map to $\conj M_i$ denoted by $\hat\Phi_{f,i}$, which can be multi-valued at a finite number of points (these correspond to trajectories of the billiard flow tangent to the sides of the polygon). 
This allows us to extend $\Phi_{f}$ to $N_1^+$ in the following multi-valued way,
\begin{equation}\label{notation gn1}
\Phi_{f}^n(B)=\bigcup_i \hat\Phi_{f,i}\left(\Phi_{f}^{n-1}(B)\cap \overline{M_i}\right),
\quad
B\subset M, 
\quad
n\geq1,
\end{equation}
and $\Phi_{f}^0=\id$ on $M$.
Similarly, for the pre-images,
\begin{equation}\label{notation gn2}
\Phi_{f}^{-n}(B)=\bigcup_i \hat\Phi_{f,i}^{-1}\left(\Phi_{f}^{-(n-1)}(B)\right).
\end{equation}

\begin{lemma}\label{lemma disj2}
\hfill
\begin{enumerate}
\item
For any $n\geq1$ and $B\subset M$, $\Phi_{f}^n(B)\cap B=\emptyset$ is equivalent to $\Phi_{f}^{-n}(B)\cap B=\emptyset$.
\item
If there is $n\geq1$ and $B\subset M$ such that $\Phi_{f}^{k}(B)\cap B=\emptyset$ for any $1\leq k\leq n$, then
$$
\Phi_{f}^{-r}(B)\cap \Phi_{f}^{-s}(B)=\emptyset,
$$ 
where $1\leq r-s\leq n$.
\end{enumerate}
\end{lemma}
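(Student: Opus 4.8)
The plan is to prove both parts by unwinding the multi-valued definitions \eqref{notation gn1} and \eqref{notation gn2} and then using that $\hat\Phi_{f,i}$ and $\hat\Phi_{f,i}^{-1}$ are the appropriate one-sided inverses of each other on their natural domains. The underlying principle is purely set-theoretic: for a (multi-valued) map $T$ with a (multi-valued) inverse $T^{-1}$ satisfying $x\in T(y)\iff y\in T^{-1}(x)$, one has $T^n(B)\cap B\neq\emptyset$ if and only if there is an admissible chain $b_0,b_1,\dots,b_n$ with $b_{k}\in\hat\Phi_{f,i_k}(b_{k-1})$, $b_0,b_n\in B$; reading such a chain backwards exhibits a point of $\Phi_f^{-n}(B)\cap B$, and symmetrically.

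For part~(1), first I would spell out: $y\in\Phi_f^n(B)\cap B$ means $y\in B$ and there exist a point $b\in B$ and indices $i_1,\dots,i_n$ with intermediate points $b=z_0, z_1,\dots,z_n=y$ such that $z_{k}\in\hat\Phi_{f,i_k}(z_{k-1})$ and $z_{k-1}\in\overline{M_{i_k}}$ for each $k$. Because each $\hat\Phi_{f,i}$ is the continuous extension of a diffeomorphism $\Phi_{f,i}$ onto its image, we have the equivalence $z_k\in\hat\Phi_{f,i_k}(z_{k-1})\iff z_{k-1}\in\hat\Phi_{f,i_k}^{-1}(z_k)$ (the only subtlety being the finitely many multi-valued points coming from flow-tangencies, where one checks the equivalence still holds by taking limits from both sides). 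Then the same chain read from $z_n$ down to $z_0$ witnesses that $b=z_0\in\Phi_f^{-n}(z_n)\subset\Phi_f^{-n}(B)$ while $z_0=b\in B$, so $\Phi_f^{-n}(B)\cap B\neq\emptyset$. The converse is identical after swapping the roles of $\Phi_f^n$ and $\Phi_f^{-n}$.

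For part~(2), suppose $\Phi_f^{-r}(B)\cap\Phi_f^{-s}(B)\neq\emptyset$ with $1\leq r-s\leq n$; pick $z$ in the intersection. Then $z\in\Phi_f^{-s}(B)$ gives an admissible chain of length $s$ from $z$ into $B$, and $z\in\Phi_f^{-r}(B)$ gives one of length $r$ from $z$ into $B$. Applying $\Phi_f^{s}$ to the first chain recovers a point $b\in B$, and then continuing the second chain (its remaining $r-s$ steps starting at $z$) maps $b$ forward: concatenating, one produces $b'\in\Phi_f^{r-s}(b)\cap B$ with $b,b'\in B$, i.e. $\Phi_f^{k}(B)\cap B\neq\emptyset$ for $k=r-s$. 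Here one must be slightly careful that $\Phi_f^s$ of a preimage chain genuinely returns into $B$ — this is where part~(1), or rather its chain description, is invoked — but since $\hat\Phi_{f,i}\circ\hat\Phi_{f,i}^{-1}=\id$ on the image of $\Phi_{f,i}$, the forward image of the backward chain is exactly the original point of $B$. Since $1\leq r-s\leq n$, this contradicts the hypothesis, proving~(2).

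The main obstacle, and the only place requiring genuine care rather than bookkeeping, is the handling of the finitely many points where $\hat\Phi_{f,i}$ is multi-valued: there the clean ``$T^{-1}$ is a two-sided inverse'' intuition must be replaced by the weaker statements $\hat\Phi_{f,i}\circ\hat\Phi_{f,i}^{-1}=\id$ on $\Phi_{f,i}(M_i)$ and $\hat\Phi_{f,i}^{-1}\circ\hat\Phi_{f,i}\supseteq\id$ on $\overline{M_i}$, and one checks that the chain arguments above only ever use these correct one-sided versions (applying $\hat\Phi_{f,i}$ after $\hat\Phi_{f,i}^{-1}$, never the reverse composition on a bad point). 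Everything else is a routine induction on $n$ translating \eqref{notation gn1}--\eqref{notation gn2} into the chain language and back.
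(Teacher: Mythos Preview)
Your argument for part~(1) is correct and is essentially the paper's proof rewritten in ``chain'' language: the paper peels off one $\hat\Phi_{f,i}$ at a time using the equivalence $\hat\Phi_{f,i}(X)\cap Y=\emptyset \Leftrightarrow X\cap\hat\Phi_{f,i}^{-1}(Y)=\emptyset$, which is exactly your observation that a forward chain read backwards is a backward chain.

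Your argument for part~(2), however, has a genuine gap. From $z\in\Phi_f^{-s}(B)$ you get a forward chain $z=w_0\to\cdots\to w_s=b\in B$, and from $z\in\Phi_f^{-r}(B)$ a forward chain $z=v_0\to\cdots\to v_r=b'\in B$. These are two \emph{different} chains out of $z$: nothing forces $v_s=w_s$, since at points of $N_1^+$ the multi-valued extension $\Phi_f$ genuinely branches. Your sentence ``continuing the second chain (its remaining $r-s$ steps starting at $z$) maps $b$ forward'' conflates $b=w_s$ with $v_s$; the last $r-s$ steps of the $v$-chain start at $v_s$, not at $b$, so you have not produced a chain of length $r-s$ from $b\in B$ into $B$. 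The one-sided inverse identities you invoke ($\hat\Phi_{f,i}\circ\hat\Phi_{f,i}^{-1}=\id$ on the image) do not help here, because the issue is not composing forward-then-backward along one chain, but rather the \emph{non-uniqueness of forward chains} out of a branch point.

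The paper avoids this difficulty by working at the level of sets rather than individual chains: starting from $\Phi_f^{-k}(B)\cap B=\emptyset$ (which follows from part~(1)), it applies $\Phi_f^{-s}$ to both terms to obtain $\Phi_f^{-(k+s)}(B)\cap\Phi_f^{-s}(B)=\emptyset$, then sets $r=k+s$. The point is that $\hat\Phi_{f,i}^{-1}$, being the preimage under a map, satisfies $\hat\Phi_{f,i}^{-1}(X)\cap\hat\Phi_{f,i}^{-1}(Y)=\hat\Phi_{f,i}^{-1}(X\cap Y)$, so disjointness is preserved branch by branch; one does not need to match up specific chains.
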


\begin{proof}
From $\Phi_{f}^n(B)\cap B=\emptyset$ we have that, for any $i_1$,
$$
\hat\Phi_{f,i_1}\left(\Phi_{f}^{n-1}(B)\cap \conj M_{i_1}\right)\cap B=\emptyset.
$$
Then its pre-image by $\hat\Phi_{f,i_1}$ is still empty, i.e.
\begin{equation}\label{eq gn-1}
\Phi_{f}^{n-1}(B)\cap \conj M_{i_1} \cap  \hat\Phi_{f,i_1}^{-1}(B)=\emptyset.
\end{equation}
The reverse is also true since~\eqref{eq gn-1} implies that the image of $\Phi_{f}^{n-1}(B)\cap \conj M_{i_1} $ by $\hat\Phi_{f,i_1}$ can not intersect $B$.
Using the fact that $\hat\Phi_{f,i_1}^{-1}(B)\subset \conj M_{i_1}$, we have shown that $\Phi_{f}^n(B)\cap B=\emptyset$ is equivalent to $\Phi_{f}^{n-1}(B)\cap  \hat\Phi_{f,i_1}^{-1}(B)=\emptyset$ for every $i_1$.

Repeating this procedure $n-1$ more times, we get for any $i_1,\dots,i_n$,
$$
B\cap \hat\Phi_{f,i_n}^{-1}\dots \hat\Phi_{f,i_1}^{-1}(B)=\emptyset.
$$
Finally, the union of the above sets is again empty and, by noticing that
$$
\Phi_{f}^{-n}(B)=\bigcup_{i_1,\dots,i_n}\hat\Phi_{f,i_n}^{-1}\dots \hat\Phi_{f,i_1}^{-1}(B),
$$
the proof of the first claim is complete.

By the above, $\Phi_{f}^{-k}(B)\cap B=\emptyset$. Applying $\Phi_{f}^{-s}$ we obtain that $\Phi_{f}^{-(k+s)}(B)\cap \Phi_{f}^{-s}(B)=\emptyset$. Write now $r-s=k$ to get the second claim.
\end{proof}

Below we use the notation $S_n^+(f)$ to highlight the dependence on $f$ for $n\geq2$.
The hypothesis of the next proposition means that there are no trajectories connecting two vertexes (not necessarily distinct) of the polygon formed by $(n-1)$  straight segments.

\begin{proposition}\label{prop branch number cst}
Let $f_0\in\RR_1^1$ or $f_0=0$, and $n\geq 1$.
If 
$$
\Phi_{f_0}^k(S_1^+)\cap S_ 1^+=\emptyset
\te{for}
1\leq k\leq n-1,
$$
then there is $\delta>0$ such that for $f\in\RR_1^1$ satisfying $\lambda(f-f_0)<\delta$ we have $p(S_n^+(f))=p(S_1^+)$.
\end{proposition}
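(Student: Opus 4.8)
The plan is to work from the identity $S_n^+(f)=\bigcup_{k=0}^{n-1}\Phi_f^{-k}(S_1^+)$ and to show that, for $f$ sufficiently close to $f_0$, this union is \emph{disjoint}; granted that, the branch number cannot change. First I would reduce $p(S_n^+(f))=p(S_1^+)$ to the pairwise disjointness of the sets $\Phi_f^{-k}(S_1^+)$, $0\leq k\leq n-1$. Indeed, once these are pairwise disjoint compact sets, any horizontal curve $\Gamma$ shorter than the minimal pairwise distance meets at most one of them, say $\Phi_f^{-k_0}(S_1^+)$, so $\Gamma\setminus S_n^+(f)=\Gamma\setminus\Phi_f^{-k_0}(S_1^+)$; since $\Phi_f$ is a $C^1$ diffeomorphism off $N_1^+$ carrying horizontal curves to horizontal curves (the upper-triangular form of $D\Phi_f^{\pm n}$ in \S\ref{sec: derivative bill map}, cf.\ \eqref{eq:dern}) and, by disjointness, the relevant points of $\Phi_f^{-k_0}(S_1^+)$ lie off $S_1^+$, the number of positive-length components cut out on small scales equals the one for $S_1^+$, i.e.\ is $\leq p(S_1^+)$; the reverse inequality is immediate from $S_1^+\subset S_n^+(f)$.

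The disjointness itself reduces, by Lemma~\ref{lemma disj2}(2) applied with $B=S_1^+$, to the statement that $\Phi_f^k(S_1^+)\cap S_1^+=\emptyset$ for $1\leq k\leq n-1$. This is where the hypothesis and a perturbation argument enter. The set $S_1^+$ is compact (Proposition~\ref{pr:prop-sing}) and independent of $f$, and for each fixed $k$ the compact set $\Phi_f^k(S_1^+)$, built from the multivalued extension \eqref{notation gn1}, depends upper-semicontinuously on $f$ in the Hausdorff metric. To see this, write $\Phi_f=R_f\circ\bar\Phi$ with $\bar\Phi$ independent of $f$ and $\|R_f-R_{f_0}\|_\infty\leq\frac\pi2\,\lambda(f-f_0)$ (this estimate also holds when $f_0=0$, where $\|R_f\|_\infty\leq\frac\pi2\lambda(f)$); the closed graphs $\mathcal{G}_{f,i}=(\id\times R_f)(\mathcal{G}_{\bar\Phi,i})$ of the extensions $\hat\Phi_{f,i}$ then converge in the Hausdorff metric to $\mathcal{G}_{f_0,i}$, because $\id\times R_f$ converges uniformly to $\id\times R_{f_0}$ on the compact set $\mathcal{G}_{\bar\Phi,i}$ (and uniform convergence of continuous maps on a compact set forces Hausdorff convergence of images, even when the limit map is not injective, as for $f_0=0$). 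Iterating \eqref{notation gn1} $k$ times --- using compactness of $S_1^+$, finiteness of the index set $\{i\}$, and the fact that a Hausdorff limit of closed sets is closed --- one obtains that every accumulation point of $\Phi_f^k(S_1^+)$ as $\lambda(f-f_0)\to0$ is contained in $\Phi_{f_0}^k(S_1^+)$.

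With this the argument closes. Since $\Phi_{f_0}^k(S_1^+)$ and $S_1^+$ are compact and disjoint by hypothesis, $\dist\!\big(\Phi_{f_0}^k(S_1^+),S_1^+\big)=:d_k>0$, so by the semicontinuity just established there is $\delta_k>0$ with $\Phi_f^k(S_1^+)$ contained in the $d_k/2$-neighborhood of $\Phi_{f_0}^k(S_1^+)$, hence disjoint from $S_1^+$, whenever $\lambda(f-f_0)<\delta_k$. Taking $\delta=\min_{1\leq k\leq n-1}\delta_k$ and invoking Lemma~\ref{lemma disj2}(2), the sets $\Phi_f^{-k}(S_1^+)$, $0\leq k\leq n-1$, are pairwise disjoint for $\lambda(f-f_0)<\delta$, and the reduction of the first paragraph gives $p(S_n^+(f))=p(S_1^+)$.

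The main obstacle is the Hausdorff upper-semicontinuity of $f\mapsto\Phi_f^k(S_1^+)$: it must be made rigorous in the presence of the finitely many multivalued points of the maps $\hat\Phi_{f,i}$ (tangency orbits) and despite the fact that in the limit case $f_0=0$ the map $\Phi_0$ is no longer invertible. Working throughout with the closed graphs $\mathcal{G}_{f,i}$ and their Hausdorff convergence, rather than with the (possibly multivalued) maps themselves, is what makes this manageable. A secondary point requiring care is the branch count in the first paragraph's reduction at points of $V$, where the multivalued extension can create several local branches of $\Phi_f^{-k_0}(S_1^+)$ emanating from the distinct sheets $\overline{M_i}$ adjacent to a vertical line of $V$; there one estimates the positive-length components of the horizontal curve sheet by sheet, each sheet contributing at most $p(S_1^+)$ of them.
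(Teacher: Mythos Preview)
Your argument is correct and follows essentially the same route as the paper: reduce to pairwise disjointness of the $\Phi_f^{-k}(S_1^+)$ via Lemma~\ref{lemma disj2}, transfer the forward disjointness from $f_0$ to nearby $f$ by continuity (you make this step more explicit via Hausdorff convergence of graphs, especially for $f_0=0$, where the paper simply reduces to the invertible case), and recover the branch count using that each $\Phi_{f,i}^{-1}$ is a diffeomorphism preserving the horizontal foliation. Your worry about $V$ in the last paragraph is unnecessary: since $S_n^+(f)$ is defined via the \emph{ordinary} (not multivalued) preimages, which sit inside the open domains $M_i$, no branch point of $S_n^+(f)$ lies on $V\setminus S_1^+$, so the ``sheet by sheet'' estimate---which would only yield a multiple of $p(S_1^+)$---is not needed.
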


\begin{proof}
Let $f_0\in\RR_1^1$.
By Lemma~\ref{lemma disj2} all pre-images of $S_1^+$ under $\Phi_{f_0}$ up to the iterate $n-1$ are disjoint. 
Moreover, each $\Phi_{f_0,i}^{-1}(S_1^+)\subset M_i$ is diffeomorphic to connected components of $S_1^+$ and the same holds up to the $(n-1)$-th iterate.
Since $f\mapsto \Phi_{f,i}$ and $f\mapsto \Phi_{f,i}^{-1}$ are continuous maps for each $i$, the same properties hold for $f$ with $\lambda(f-f_0)$ small enough.
Recall also the fact that each $\Phi_{f,i}^{-1}$ preserves horizontal lines.
Therefore, the number $p(S_k^+(f))$ is the same for every $1\leq k\leq n$.

Suppose now that $f_0=0$. In this case each $\Phi_{0,i}$ is not a diffeomorphism. For the same reasons as above, given any $f\in\RR^1_1$ such that $\lambda(f)$ is small, $\Phi_{f}^k(S_1^+)\cap S_ 1^+=\emptyset$, $1\leq k\leq n-1$. The proof follows from the previous case.
\end{proof}

\begin{theorem}\label{coroll hyp attr}
Consider a polygon without parallel sides facing each other and $f_0\in\RR_1^1$ or $f_0=0$ such that
\begin{equation*}
\Phi_{f_0}^{k}(S_1^+)\cap S_1^+=\emptyset \te{for} k\geq1.
\end{equation*}
Then there is $\delta>0$ such that for $f\in\BB$ satisfying $\lambda(f-f_0)<\delta$, the billiard map $\Phi_f$ has a hyperbolic attractor with countably many ergodic SRB measures.
\end{theorem}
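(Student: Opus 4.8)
The plan is to derive the statement from Theorem~\ref{thm bill p<a} by verifying its hypothesis for every $f\in\BB$ in a small neighbourhood of $f_0$. Since the polygon has no parallel sides facing each other, Proposition~\ref{prop gen hyp att} already guarantees that $A$ is a generalized hyperbolic attractor for each such $f$; so it suffices to produce a single integer $m\geq1$ and a $\delta>0$ with
\[
p\bigl(S_m^+(f)\bigr)<\alpha\bigl(\Phi_f^m\bigr)
\qquad\text{for all } f\in\BB \text{ with } \lambda(f-f_0)<\delta .
\]
I will bound the left side from above via Proposition~\ref{prop branch number cst} and the right side from below via Corollary~\ref{co:delta-positive}.

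For the left side: for every $m\geq1$, the assumption $\Phi_{f_0}^{k}(S_1^+)\cap S_1^+=\emptyset$ ($k\geq1$) contains the cases $1\leq k\leq m-1$, so Proposition~\ref{prop branch number cst} yields $\delta_1(m)>0$ such that $p(S_m^+(f))=p(S_1^+)$ whenever $f\in\RR_1^1$ and $\lambda(f-f_0)<\delta_1(m)$. Write $p_0:=p(S_1^+)$, a fixed positive integer ($p_0=2$ when the polygon is convex), independent of $m$ and of $f$.

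For the right side, I claim there are constants $A_*>0$, $\mu_*>1$ and a $\delta_2>0$, independent of $f$ and of the iterate, with $\alpha(\Phi_f^n)\geq A_*\mu_*^{\,n}$ for all $n\geq1$ and all $f\in\RR_1^1$ satisfying $\lambda(f-f_0)<\delta_2$. For a fixed $f$ this is Corollary~\ref{co:delta-positive}: the polygon has no parallel sides facing each other, hence $\PP=\emptyset$ for $\Phi_f$, so $D_f$ is uniformly hyperbolic by Proposition~\ref{pr:delta-zero}, i.e.\ $\alpha(\Phi_f^n)\geq A\mu^n$ with $\mu>1$. The constants supplied by that proof can be chosen uniformly near $f_0$: the geometric quantity $\Delta=\min_{ij}|\pi-\delta(L_i,L_j)|/2>0$ depends only on the polygon; the Property~(A) constant $m_0$ of $D_f$ can be taken independent of $f$ since $\PP=\emptyset$; $\lambda(f)\leq\lambda_*<1$ for $\lambda(f-f_0)$ small; and, fixing $0<\epsilon<\Delta$, the quantity $r_f(\epsilon)=\min_{\epsilon\leq|\theta|<\pi/2}\cos(f(\theta))/\cos\theta$ depends continuously on $f$ and satisfies $r_{f_0}(\epsilon)>1$, hence $r_f(\epsilon)\geq r_*>1$ near $f_0$. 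Inserting these bounds into~\eqref{eq:alpha-lambda}, \eqref{eq:lambda-uniform} and Lemma~\ref{le:bounded-angle} gives the claim, e.g.\ with $\mu_*=r_*^{1/(m_0+2)}$.

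To conclude, pick $m\geq1$ with $A_*\mu_*^{\,m}>p_0$ and set $\delta=\min\{\delta_1(m),\delta_2\}$. Then every $f\in\BB$ with $\lambda(f-f_0)<\delta$ satisfies $p(S_m^+(f))=p_0<A_*\mu_*^{\,m}\leq\alpha(\Phi_f^m)$, so Theorem~\ref{thm bill p<a} applies: $\Phi_f$ has the hyperbolic attractor $A$, which carries countably many (indeed, by~\cite{Sataev92}, finitely many) ergodic SRB measures. I expect the uniformity in $f$ of the expansion estimate to be the delicate point — chiefly, extracting a single $r_*>1$ below all the $r_f(\epsilon)$ for $f$ near $f_0$ — and it is exactly this that allows the two cases $f_0\in\RR_1^1$ and $f_0=0$ (strongly contracting laws, near the slap map) to be handled simultaneously, using that $f\mapsto r_f(\epsilon)$ is continuous and $r_{f_0}(\epsilon)>1$ in both.
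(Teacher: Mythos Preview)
Your proof is correct and follows essentially the same approach as the paper's: both verify the hypothesis of Theorem~\ref{thm bill p<a} by using Proposition~\ref{prop branch number cst} to freeze $p(S_m^+(f))=p(S_1^+)$ near $f_0$, and the uniform hyperbolicity from Proposition~\ref{pr:delta-zero} (via Corollary~\ref{co:delta-positive}) to make $\alpha(\Phi_f^m)$ exceed $p(S_1^+)$ for $m$ large. Your version is in fact more explicit than the paper's about why the expansion constants can be chosen uniformly in $f$ near $f_0$ (tracking $r_f(\epsilon)$, $\lambda_*$, and the Property~(A) bound through the proof of Proposition~\ref{pr:delta-zero}), whereas the paper compresses this into the single phrase ``take the supremum of such $m$ in some neighborhood of $f_0$''.
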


\begin{proof}
By the uniform hyperbolicity of $\Phi_{f}$ (see Proposition~\ref{pr:delta-zero}), there is some $m\geq1$ such that $\alpha(\Phi^m_{f})>1$. Take the supremum of such $m$ in some neighborhood of $f_0$.
Let $n\geq 1$ such that $\alpha(\Phi^{nm}_{f})\geq \alpha(\Phi^m_{f})^n>p(S_1^+)$.
By Proposition~\ref{prop branch number cst} we have that $p(S_{nm}^+)=p(S_1^+)$ by choosing a smaller $\delta$. 
The claim follows from Theorem~\ref{thm bill p<a}.
\end{proof}

\begin{theorem}\label{thm hyp attr}
Consider a polygon without parallel sides facing each other, $f_0\in\RR_1^1$ or $f_0=0$, 
and
$$
n>\frac{\log p(S_1^+)}{\log\alpha(\Phi_{f_0})}
$$ 
such that
\begin{equation*}
\Phi_{f_0}^k(S_1^+)\cap S_1^+=\emptyset
\te{for} 
1\leq k\leq n-1.
\end{equation*}
Then there is $\delta>0$ such that for $f\in\BB$ satisfying $\lambda(f-f_0)<\delta$, the billiard map $\Phi_f$ has a hyperbolic attractor with countably many ergodic SRB measures.
\end{theorem}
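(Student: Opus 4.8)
The plan is to reduce the statement to Theorem~\ref{thm bill p<a} applied with $m=n$, by showing that the transversality hypothesis~\eqref{Property P} of that theorem persists on a $\lambda$-ball of reflection laws around $f_0$. The conclusion has two parts. Since the polygon has no parallel sides facing each other, $\PP=\emptyset$, so Proposition~\ref{prop gen hyp att} (via Proposition~\ref{pr:delta-zero}) already gives that $A$ is a generalized hyperbolic attractor for \emph{every} contracting reflection law; hence the whole task is to produce, for $f\in\BB$ close to $f_0$, the inequality $p(S_n^+(f))<\alpha(\Phi_f^n)$.

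For the branch-number side, the hypothesis $\Phi_{f_0}^k(S_1^+)\cap S_1^+=\emptyset$ for $1\le k\le n-1$ is exactly the assumption of Proposition~\ref{prop branch number cst} with this $n$, which is stated for both $f_0\in\RR_1^1$ and $f_0=0$; it yields $\delta_1>0$ such that $p(S_n^+(f))=p(S_1^+)$ for all $f\in\RR_1^1$ with $\lambda(f-f_0)<\delta_1$, and in particular for all such $f\in\BB$ since $\BB\subset\RR_1^2\subset\RR_1^1$. For the expansion side, recall that $N_1^+$ does not depend on $f$ and that $\|D\Phi_f(x)(1,0)\|=\cos\theta_0/\cos\bar\theta_1$ (Section~\ref{sec: derivative bill map}) varies continuously with $f$, so $f\mapsto\alpha(\Phi_f)$ is continuous. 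The numerical hypothesis $n>\log p(S_1^+)/\log\alpha(\Phi_{f_0})$ is the strict inequality $\alpha(\Phi_{f_0})^n>p(S_1^+)$; combined with $\alpha(\Phi_f^n)\ge\alpha(\Phi_f)^n$ and the continuity just noted, it gives $\delta_2>0$ with $\alpha(\Phi_f^n)>p(S_1^+)$ whenever $\lambda(f-f_0)<\delta_2$.

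Setting $\delta=\min\{\delta_1,\delta_2\}$, every $f\in\BB$ with $\lambda(f-f_0)<\delta$ satisfies $p(S_n^+(f))=p(S_1^+)<\alpha(\Phi_f^n)$, i.e.\ condition~\eqref{Property P} with $m=n$, and Theorem~\ref{thm bill p<a} then provides a hyperbolic attractor carrying countably many ergodic SRB measures. I do not expect a genuine obstacle here: all the real analysis is contained in Proposition~\ref{prop branch number cst} and Theorem~\ref{thm bill p<a}. The only points needing care are that the two smallness requirements on $\delta$ must be imposed simultaneously, and that Proposition~\ref{prop branch number cst} be invoked in the version allowing $f_0=0$, where $\Phi_{f_0}$ is the (non-invertible) slap map rather than a diffeomorphism.
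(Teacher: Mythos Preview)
Your proof is correct and follows essentially the same approach as the paper: invoke Proposition~\ref{prop branch number cst} to stabilize $p(S_n^+(f))=p(S_1^+)$ near $f_0$, use continuity of $f\mapsto\alpha(\Phi_f)$ together with $\alpha(\Phi_f^n)\ge\alpha(\Phi_f)^n$ and the hypothesis on $n$ to obtain $\alpha(\Phi_f^n)>p(S_1^+)$, then apply Theorem~\ref{thm bill p<a}. The only cosmetic difference is that the paper phrases the continuity step via an explicit $\varepsilon$ (writing $\alpha(\Phi_f)^n>(\alpha(\Phi_{f_0})-\varepsilon)^n$) while you package it directly as a $\delta_2$; the arguments are otherwise identical.
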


\begin{proof}
Proposition~\ref{prop branch number cst} holds for $\Phi_f$, i.e. $p(S^+_n(f))=p(S^+_1)$, for $\lambda(f-f_0)$ small enough.
From the continuity of $f\mapsto \alpha(\Phi_f)$, for any $\varepsilon>0$ and sufficiently small $\lambda(f-f_0)$, we have
$
\alpha(\Phi_f^n) \geq \alpha(\Phi_f)^n  > (\alpha(\Phi_{f_0})-\varepsilon)^n.
$
By the hypothesis on $n$ we get $\alpha(\Phi_{f_0})>p(S^+_1)^{1/n}$.
So, for $\varepsilon$ small enough (depending on $n$) we have $\alpha(\Phi_f^n)>p(S_1^+)$.
The result now follows from Theorem~\ref{thm bill p<a}.
\end{proof}

The study of the ergodic properties of the SRB measures obtained in Theorems~\ref{coroll hyp attr} and \ref{thm hyp attr} will appear elsewhere~\cite{MDDGP13}.

\section{Generic polygons}
\label{sec:Generic}

We now introduce the moduli spaces of polygons and prove that, generically, 
polygons in these spaces have no ``orthogonal vertex connections''. This will be used later to prove the existence of SRB measures for small $\lambda(f)$.
 
\subsection{Moduli spaces of polygons}

Any sequence $P=(v_0,v_1,\ldots, v_{n-1})$ of points in $\Rr^2$
determines a {\em closed polygonal line}. 
The line segments $e_1=[v_{0},v_1]$,  $e_2=[v_{1},v_2]$, $\ldots$, $e_0=e_n=[v_{n-1},v_0]$,
are called the {\em edges} of $P$.
We say that $P$ is a {\em $n$-gon} if
the lines supporting $e_{i-1}$ and $e_i$ are always distinct, and the polygonal
line $P$ is non self-intersecting.
Two $n$-gons $P=(v_0,v_1,\ldots, v_{n-1})$,
$P'=(v_0',v_1',\ldots, v_{n-1}')$ are said to be {\em similar} if 
there is an orientation preserving similarity 
$T$ of the Euclidean plane such that $T(v_i)=v_i'$, for $i=0,1,\ldots, n-1$. 
Similarity is an equivalence relation on the space of all $n$-gons,
and the quotient   by this relation is called
the {\em moduli space}  of $n$-gons, denoted hereafter by $\mathscr{P}_n$.
Let $\Pp^1$ and $\Pp^2$ denote the real projective line and plane,
respectively.

\begin{proposition}\label{moduli:Pscr:n}
The moduli space $\mathscr{P}_n$ is diffeomorphic to an open semialgebraic subset of 
$\Pp^1\times (\Pp^2)^{n-3}\times \Pp^1$, and it is a manifold
of dimension $2n-4$.
\end{proposition}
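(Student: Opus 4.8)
The plan is to construct explicit coordinates on $\mathscr{P}_n$ and show they identify it with an open semialgebraic subset of $\Pp^1\times(\Pp^2)^{n-3}\times\Pp^1$. First I would normalize: given an $n$-gon $P=(v_0,\ldots,v_{n-1})$, use the orientation-preserving similarity group (which is $4$-dimensional: translations, rotations, scalings) to put $P$ into a standard position. A natural normalization is to send $v_0$ to the origin and $v_1$ to $(1,0)$; this uses up the full similarity group and leaves the remaining vertices $v_2,\ldots,v_{n-1}$ (that is, $2(n-2)=2n-4$ real parameters) as coordinates on $\mathscr{P}_n$. This already shows $\mathscr{P}_n$ is, set-theoretically, parametrized by an open subset of $\Rr^{2n-4}$, where openness comes from the two defining conditions of an $n$-gon (consecutive edges spanning distinct lines, and non self-intersection), each of which is an open (indeed semialgebraic) condition on the vertex coordinates.

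Next I would reorganize these $2n-4$ parameters into the stated product of projective spaces, which is the more symmetric and geometrically meaningful packaging. The idea is to record, instead of the vertices themselves, the \emph{directions} of the edges together with enough length ratios. Concretely: the direction of edge $e_1$ is a point of $\Pp^1$ (a line direction, i.e.\ an undirected line through the origin — this is the first $\Pp^1$ factor); the direction of the last edge $e_n=e_0$ is another point of $\Pp^1$ (the final factor). For each interior vertex index $i=2,\ldots,n-2$ one records the pair consisting of the direction of $e_i$ and the ratio of the length of $e_i$ to that of $e_1$ (or to the running total), and a direction-plus-length-ratio pair is naturally a point of $\Pp^2$ once one homogenizes; this gives the $(\Pp^2)^{n-3}$ factor. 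The closure condition $\sum e_i=0$ (the polygon closes up) is precisely what cuts two real dimensions, reconciling the naive count: $1+2(n-3)+1 = 2n-4$. I would make the homogenization precise so that the map from the vertex-coordinate chart to $\Pp^1\times(\Pp^2)^{n-3}\times\Pp^1$ is given by polynomial (hence semialgebraic) formulas, with a semialgebraic inverse on its image, and with nonvanishing Jacobian; this last point gives the diffeomorphism onto an open semialgebraic subset, and the dimension count $2n-4$ is then immediate since $\dim\Pp^1\times(\Pp^2)^{n-3}\times\Pp^1 = 1+2(n-3)+1 = 2n-4$.

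The remaining bookkeeping items are: (i) well-definedness — changing the representative of a similarity class does not change the image point, which holds because directions and length ratios are similarity invariants (rotations act on the $\Pp^1$ and $\Pp^2$ factors by the induced projective action, and after passing to the quotient one must check the residual action is free and the quotient is the claimed open set — alternatively one fixes the normalization $v_0=0$, $v_1=(1,0)$ and avoids quotients altogether, which I would do); (ii) the $n$-gon conditions translate into open semialgebraic conditions in the new coordinates, so the image is open and semialgebraic; (iii) smoothness of the parametrization and its inverse, giving the manifold structure and the dimension.

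The main obstacle I expect is item (iii) together with getting the homogenization in the $\Pp^2$ factors exactly right: one must choose the identification ``edge direction + length ratio $\leftrightarrow$ point of $\Pp^2$'' so that it is a genuine diffeomorphism (not merely a bijection) on the relevant open set, and so that the closure relation $\sum_i e_i = 0$, when expressed in these projective coordinates, carves out precisely an open subset of the product rather than a lower-dimensional subvariety — i.e.\ one has to verify that the closure constraint has already been ``used up'' by the choice of the two $\Pp^1$ endpoints and the length normalization, so that nothing further is imposed on the $(\Pp^2)^{n-3}$ factor. Checking the Jacobian is nonsingular on the whole chart, and that the transition between the affine vertex chart and the projective description is semialgebraic with semialgebraic inverse, is the technical heart; everything else is routine.
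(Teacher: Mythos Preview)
Your proposal gets the dimension count and the broad strategy right, but misidentifies what the projective factors encode, and this is the source of the very difficulties you flag at the end.

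In the paper the point of $\Pp^2$ attached to an edge is not a ``direction plus length ratio'' but the projective coordinates $(a:b:c)$ of the \emph{line} $ax+by+c=0$ supporting that edge --- the dual projective plane is the natural home for affine lines in $\Rr^2$. After normalizing $v_0,v_1$ to two fixed points on the $x$-axis, one edge-line is pinned to $(0:1:0)$; the two adjacent edge-lines must each pass through one of the fixed vertices, which reduces each from $\Pp^2$ to a pencil $\Pp^1$. The remaining $n-3$ edge-lines are unconstrained in $\Pp^2$, giving $\Pp^1\times(\Pp^2)^{n-3}\times\Pp^1$. Vertices are then \emph{recovered} as intersections $v_i=e_{i-1}\cap e_i$ of consecutive lines, a rational formula in the $(a_i:b_i:c_i)$, and the polygon closes automatically because the first and last lines in the cyclic list coincide by construction. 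There is no closure constraint to impose: your worry that $\sum e_i=0$ might carve out a lower-dimensional subvariety simply does not arise in the line parametrization, and the semialgebraic conditions (consecutive lines non-parallel, no self-intersection) are then written directly in these projective coordinates.

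Your edge-vector approach, by contrast, builds in exactly this difficulty. A direction in $\Pp^1$ together with a positive length ratio lives in $\Pp^1\times\Rr_{>0}$, which has no natural identification with an open subset of $\Pp^2$; any homogenization you choose will be ad hoc and will not match the statement. There is also a bookkeeping slip: once you fix $v_0=0$ and $v_1=(1,0)$, the edge $[v_0,v_1]$ is completely determined, so its direction cannot supply a free $\Pp^1$ factor. If you push the edge-vector parametrization through, you will find the closure relation and the length data interact in a way that makes the map to $\Pp^1\times(\Pp^2)^{n-3}\times\Pp^1$ awkward at best. Switching to supporting lines in the dual plane dissolves all of this at once.
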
 

\begin{proof} 
Each edge $e_i$ determines a line in the Euclidean plane $\Rr^ 2$,
defined by an affine equation $a_i x+ b_i y+c_i=0$, and we shall refer to 
$(a_i:b_i:c_i)$ as the projective coordinates of $e_i$.
The vertices $v_0, v_1,\ldots, v_{n-1}$ of the polygon,
$v_i = e_{i-1}\cap e_i$,  are easily computed to have coordinates
\begin{equation} \label{xi:yi}
 (x_i,y_i) =\left(\frac{b_{i-1} c_i-b_i c_{i-1}}{a_{i-1} b_i-a_i b_{i-1}},\frac{a_i c_{i-1}-a_{i-1} c_i}{a_{i-1} b_i-a_i b_{i-1}}\right)\;.
\end{equation}
We are adopting the convention that $e_{0}=e_{n}$.
Up to a similarity transformation, we can assume that
$v_{0}=(-1,0)$ and $v_1=(1,0)$, and specify coordinates for the edges $e_0$, $e_1$ and $e_{n-1}$
respectively of the form\,
$(a_0:b_0:c_0)=(0,1,0)$, $(a_1:b_1:a_1)\equiv(a_1:b_1)$ and
$(a_{n-1},b_{n-1},-a_{n-1})\equiv (a_{n-1}:b_{n-1})$. 
Hence any $n$-gon $P$ determines and is determined by the 
vector in $\Pp^1\times (\Pp^2)^{n-3}\times \Pp^1$,
$$\theta(P)=[(a_1:b_1), (a_2:b_2:c_2), \ldots, (a_{n-2}:b_{n-2}:c_{n-2}), (a_{n-1}:b_{n-1})]\;.  $$

The values of the projective coordinates of the edges missing  in the vector $\theta(P)$ are fixed to 
be $(a_0:b_0:c_0)=(0:1:0)$, $c_1=a_1$ and  $c_{n-1}=-a_{n-1}$.
Since consecutive edges are not parallel, we must have
\begin{equation} \label{non:parallel}
a_{i-1} b_i-a_i b_{i-1}\neq 0\quad \text{ for } \; i=0,1,\ldots, n-1 \;.
\end{equation}
Under these conditions, the coordinates $(x_i,y_i)$ of the vertices in~ (\ref{xi:yi})
are well defined.
The second condition in the definition of a $n$-gon is equivalent to saying that for $i\neq j$ and $i\neq j\pm 1$
either the lines $(a_i:b_i:c_i)$ and $(a_j:b_j:c_j)$ are parallel, or else
their intersection point
$$ (u_{ij},v_{ij}) =\left(\frac{b_{j} c_i-b_i c_{j}}{a_{j} b_i-a_i b_{j}},\frac{a_i c_{j}-a_{j} c_i}{a_{j} b_i-a_i b_{j}}\right)\; $$
does not lie in $e_i\cup e_j$, a condition which is easily seen to be semialgebraic.
Hence the subset $\Omega_n\subseteq \Pp^1\times (\Pp^2)^{n-3}\times \Pp^1$ of all data
satisfying both~(\ref{non:parallel}) and the previous condition is an open semialgebraic subset
of $\Pp^1\times (\Pp^2)^{n-3}\times \Pp^1$.
The previous considerations show that the map $\theta$ induces a diffeomorphism
$\theta\colon\mathscr{P}_n\to\Omega_n$.
\end{proof}

Given a $n$-gon $P$ with edges $e_1,\ldots, e_n$,  denote by $\ell_i$ the line
supporting $e_i$, and by  $\pi_i:\Rr^2\to \ell_i$
the orthogonal projection onto $\ell_i$.
We say that $P$ has an {\em orthogonal vertex connection of order $m$} if there is a sequence of edges  
$e_{i_1},\ldots, e_{i_{m'}}$,  
with length $m'\leq m$, and a sequence $p_0,\ldots, p_{m'}$ of points in $P$ such that:
\begin{enumerate}
\item[(a)] $p_0$ and $p_{m'}$ are vertices of $P$;
\item[(b)] $p_k= \pi_{i_k}(p_{k-1})$, for every $k=1,\ldots, m'$;
\item[(c)] the polygonal line
$p_0 p_1\ldots  p_{m'}$ is contained in $P$;
\item[(d)] the points
$p_0,\ldots, p_{m'}$ are not all equal.
\end{enumerate}
When this property holds for some $m\in\Nn$, we simply say that  $P$ has an {\em orthogonal vertex connection}.
We denote by $\mathscr{S}_{n,m}^\ast\subseteq \mathscr{P}_{n}$ the subset of  $n$-gons with 
 some orthogonal vertex connection of order $m$;
and set $\mathscr{S}_{n,\infty}^\ast$ to be the union of all $\mathscr{S}_{n,m}^\ast$. 

\begin{figure*}
\begin{center}
\includegraphics*[scale=.4]{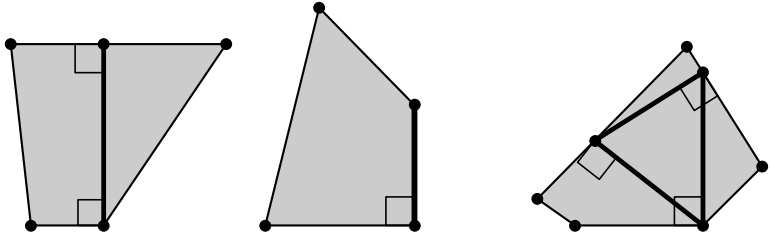}
\end{center}
\caption{Examples of polygons with orthogonal vertex
connections.}
\end{figure*}

\begin{remark}
A polygon $P$ is in $\mathscr{S}_{n,\infty}^\ast$ iff 
some vertex of $P$ has a forward orbit under the slap map
which eventually hits another vertex. Here we exclude fixed points at vertices where the polygon makes acute angles,
because of condition (d).
\end{remark}

\begin{proposition} The set $\mathscr{S}_{n,\infty}^\ast$ (resp. $\mathscr{S}_{n,m}^\ast$) is a countable (resp. finite) union of codimension $1$, closed semialgebraic sets.
\end{proposition}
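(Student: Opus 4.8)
The plan is to express $\mathscr{S}_{n,m}^\ast$ as a finite union of semialgebraic sets indexed by the combinatorial data of a potential orthogonal vertex connection, and then to show each piece is closed of codimension at least $1$; the countable statement for $\mathscr{S}_{n,\infty}^\ast=\bigcup_m\mathscr{S}_{n,m}^\ast$ then follows immediately.

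First I would fix the discrete data of an orthogonal vertex connection of order $m'\le m$: a choice of starting vertex index $j_0\in\{0,\ldots,n-1\}$, a choice of ending vertex index $j_{m'}\in\{0,\ldots,n-1\}$, a length $m'\le m$, and a sequence of edge indices $i_1,\ldots,i_{m'}\in\{1,\ldots,n\}$. There are finitely many such choices. For each fixed choice, I would write down the conditions (a)--(d) in the coordinates of Proposition~\ref{moduli:Pscr:n}: the points $p_0=v_{j_0}$ and $p_{m'}=v_{j_{m'}}$ are given by the rational formulas~\eqref{xi:yi} in the edge coordinates; the recursion $p_k=\pi_{i_k}(p_{k-1})$ is an affine (hence polynomial, after clearing the denominators $a_{i-1}b_i-a_ib_{i-1}\ne 0$ that are already excluded on $\Omega_n$) relation expressing $p_k$ rationally in terms of $p_{k-1}$ and the coordinates of $e_{i_k}$; and the containment condition (c), that each segment $[p_{k-1},p_k]$ lies in $P$, is a conjunction of finitely many polynomial inequalities (a segment lies in the convex-free polygon region iff it meets no edge transversally and its endpoints lie in $\overline P$, all semialgebraic). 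Condition (d), that not all $p_k$ coincide, together with (a), forces the defining equation $p_0=p_{m'}$ — i.e. $v_{j_0}$ equals its image after the prescribed sequence of orthogonal projections — which is a nontrivial polynomial equation on $\Omega_n$: I would argue it cannot vanish identically because for a generic $n$-gon a vertex is not sent back to a vertex by any fixed finite composition of edge-projections (exhibiting one polygon where it fails, e.g. a generic triangle for small $m$ and then noting the algebraic set it cuts out is proper). Hence each indexed piece is the intersection of $\Omega_n$ with a proper algebraic hypersurface, intersected with the semialgebraic containment conditions, so it is semialgebraic of codimension $\ge 1$; closedness in $\mathscr{P}_n$ follows because all the defining (in)equalities are non-strict once we are inside the open set $\Omega_n$ where denominators do not vanish — more precisely one takes the closure within $\mathscr{P}_n$, which only adds points of the same algebraic set, keeping the codimension bound. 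Taking the finite union over all combinatorial choices gives that $\mathscr{S}_{n,m}^\ast$ is a finite union of codimension-$1$ closed semialgebraic sets, and then $\mathscr{S}_{n,\infty}^\ast=\bigcup_{m\ge 1}\mathscr{S}_{n,m}^\ast$ is a countable such union.

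The main obstacle I anticipate is the codimension claim: one must verify that the equation ``$v_{j_0}$ returns to a vertex under the chosen projection word'' is not identically satisfied on the connected (or irreducible-component) structure of $\Omega_n$. This requires either exhibiting, for each combinatorial type, an explicit $n$-gon violating it, or a dimension count showing the map $P\mapsto(\text{image of }v_{j_0}\text{ under the word})-v_{j_{m'}}$ is submersive somewhere; the bookkeeping over edge-index words that revisit edges (so that the projection word is not ``free'') is the fiddly part. A second, more minor technical point is handling the containment condition~(c) carefully so that the resulting set is genuinely closed: intersecting with the closed conditions ``segment does not cross any edge'' is fine, but one should pass to the closure in $\mathscr{P}_n$ at the end and check no spurious lower-codimension strata appear, which is routine since the closure of a codimension-$\ge 1$ semialgebraic set is still codimension $\ge 1$.
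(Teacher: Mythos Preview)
Your proposal is correct and follows essentially the same route as the paper: index by the combinatorial data (start/end vertices and edge-itinerary), write the connection condition as a polynomial equation in the edge coordinates of $\Omega_n$ after clearing denominators, and take the resulting finite (resp.\ countable) union. The paper adds one observation you omit---the vector equation $p_{m'}=v_\ell$ reduces to a single scalar equation because $p_{m'}$ automatically lies on the line supporting $e_{i_{m'}}$, yielding codimension exactly one---while conversely it simply asserts codimension one without justifying nontriviality of that equation, the point you correctly flag as the main thing to check.
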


\begin{proof}
The orthogonal projection onto an edge $e$ with projective coordinates
$(a:b:c)$ is given by
\begin{equation} \label{orth:proj}
\pi_{(a:b:c)}(x,y)=\left(x-  \frac{ a x + b y + c }{a^ 2+b^ 2}\, a, \, 
x- \frac{ a x + b y + c }{a^ 2+b^ 2}\, b \,\right) \;.
\end{equation}
Identify  $\mathscr{P}_n$ with the open subset $\Omega_n\subseteq \Pp^1\times (\Pp^2)^{n-3}\times \Pp^1$, as explained in proposition~ \ref{moduli:Pscr:n}.
The set $\mathscr{S}_{n,m}^\ast$
is contained in the finite union of algebraic 
hypersurfaces defined by 
\begin{equation} \label{ovcp:eq}
(x_\ell,y_\ell)= \pi_{(a_{i_m'}:b_{i_m'}:c_{i_m})} \circ \ldots \circ
\pi_{(a_{i_1}:b_{i_1}:c_{i_1})}  (x_k,y_k) 
\end{equation}
where $1\leq m'\leq m$, $(i_0=k, i_1,\ldots, i_{m'})$ is a  finite sequence of indices in $\{0,1,\ldots,n-1\}$,
and $v_\ell=(x_\ell,y_\ell)$ is an endpoint of $e_{i_m}$.
This equation expresses the existence of an orthogonal vertex connection of order $m$
between the vertices $v_k$ and $v_\ell$.
Because expressions in~(\ref{xi:yi}) and~(\ref{orth:proj}) are rational functions
of the coordinates $(a_i:b_i:c_i)$ of the polygon's edges, equation 
 ~(\ref{ovcp:eq}) is also rational in these coordinates. Re\-du\-cing to a common denominator, and eliminating it, equation~(\ref{ovcp:eq}) becomes polynomial
in the projective coordinates of $\Omega_n\subseteq \Pp^1\times(\Pp^{n-3})\times\Pp^1$.
Notice that ~(\ref{ovcp:eq}) is a system of two equations
which reduces to single equation because the projection points  lie in a line.
Given distinct vertex indices $i,j$,
and an edge index $k$, consider the set $\Sigma_{k,i_1,\ldots, i_{m'},\ell}$ of polygons with an orthogonal vertex connection between the vertices
$v_k$ and $v_\ell$, having the prescribed itinerary.
The set  $\Sigma_{k,i_1,\ldots, i_{m'},\ell}$  is
defined by the equation~ (\ref{ovcp:eq}) and hence is 
a codimension one  closed 
subset of the semialgebraic set $\mathscr{P}_n$.
Thus $\mathscr{S}_{n,m}^\ast$ is a finite union,
and $\mathscr{S}_{n,\infty}^\ast=\cup_{m\geq 1} \mathscr{S}_{n,m}^\ast$ a countable union, of such closed semialgebraic sets. 
\end{proof}

\begin{corollary}\label{corollary generic novcp}
The set  $\mathscr{P}_{n}\setminus  \mathscr{S}_{n,m}^\ast$
is open and dense, whereas the set  $\mathscr{P}_{n}\setminus \mathscr{S}_{n,\infty}^\ast$  is residual in $\mathscr{P}_{n}$. 
Both  sets
$\mathscr{S}_{n,m}^\ast$ and $\mathscr{S}_{n,\infty}^\ast$ have zero Lebesgue measure in $\mathscr{P}_{n}$.
\end{corollary}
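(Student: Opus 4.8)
The plan is to deduce the corollary from the preceding proposition, which exhibits $\mathscr{S}_{n,m}^\ast$ as a \emph{finite} union, and $\mathscr{S}_{n,\infty}^\ast=\bigcup_{m\geq1}\mathscr{S}_{n,m}^\ast$ as a \emph{countable} union, of closed semialgebraic subsets of $\mathscr{P}_n$ of codimension at least one. The first step is to record two elementary facts about such a set $\Sigma$. Since $\Sigma$ is semialgebraic of dimension strictly less than $\dim\mathscr{P}_n=2n-4$, it cannot contain a nonempty open subset of $\mathscr{P}_n$ (an open set has full dimension), so $\Sigma$ has empty interior; being also closed, $\Sigma$ is nowhere dense. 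Moreover, by the cell decomposition of semialgebraic sets, $\Sigma$ is a finite union of smooth submanifolds each of positive codimension, hence is a null set for the local Lebesgue measure class on $\mathscr{P}_n$ (all smooth volume forms on the manifold $\mathscr{P}_n$ being mutually absolutely continuous, this notion is unambiguous).

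Next I would assemble the finite-union statement. A finite union of closed nowhere dense sets is closed and nowhere dense, so $\mathscr{S}_{n,m}^\ast$ is closed and nowhere dense, and therefore $\mathscr{P}_n\setminus\mathscr{S}_{n,m}^\ast$ is open and dense. A finite union of Lebesgue-null sets is Lebesgue-null, so $\mathscr{S}_{n,m}^\ast$ has zero measure.

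For the infinite case, $\mathscr{S}_{n,\infty}^\ast$ is a countable union of closed nowhere dense sets, hence meagre (of the first category) in $\mathscr{P}_n$; since $\mathscr{P}_n$ is a manifold, it is a Baire space, so $\mathscr{P}_n\setminus\mathscr{S}_{n,\infty}^\ast$ is residual (and in particular dense). Likewise a countable union of null sets is null, so $\mathscr{S}_{n,\infty}^\ast$ has zero Lebesgue measure and its complement has full measure.

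The argument is essentially bookkeeping once the preceding proposition is available; the one point that carries real content — and hence the place I would be most careful — is the claim that each of the sets $\Sigma_{k,i_1,\ldots,i_{m'},\ell}$ is a \emph{proper} subvariety, equivalently that the polynomial form of equation~\eqref{ovcp:eq} is not satisfied identically on any connected component of $\mathscr{P}_n$. This is exactly what the ``codimension $1$'' assertion of the previous proposition encodes (established there by the explicit rational-function form of the defining equations), so here I would simply invoke it; to be fully self-contained one could instead exhibit, for each admissible itinerary, a single polygon for which that orthogonal vertex connection fails, which is routine.
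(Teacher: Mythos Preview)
Your argument is correct and is exactly the intended one: the paper states this as an immediate corollary of the preceding proposition (with no separate proof given), and you have simply spelled out the standard deduction that a finite (resp.\ countable) union of closed codimension-one semialgebraic sets is closed nowhere dense (resp.\ meagre) and Lebesgue-null. Your extra remark flagging that ``codimension $1$'' is where the content lies is a fair observation, but since the proposition already asserts this you are right to just invoke it.
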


\subsection{Strongly contracting generic polygonal billiards}

The properties of the degenerate case $f=0$, simpler to obtain, can be extended by Theorem~\ref{thm hyp attr} to billiards with $\lambda(f)$ very close to $0$.

One can naturally reduce $\Phi_0$ on $\{\theta=0\}$ to a one-dimensional map called the slap map.
This map is piecewise affine because the billiard is polygonal.
Wherever a vertex of the polygon projects orthogonally to the interior of a side we obtain an element of the set $S$ defined by $S_1^+\cap\{(s,\theta)\colon \theta=0\}=S\times\{0\}$. 
These are the pre-images of points where the slap map is not defined.
On each connected open interval $I_i$ of the domain of $\Phi_0$ we define $\Phi_{0,i}=\Phi_0|_{I_i}$.

For any polygon we define the angles $\varphi_i\in(-\pi/2,\pi/2)$ between the straight lines supporting the sides of the polygon.
If there are parallel sides we set the corresponding angle to $\varphi_i=0$.
It is simple to check that the minimum expansion rate of $\Phi_0$ is given by
$$
\alpha(\Phi_0)=\min_i\left|\cos\varphi_i \right|^{-1} \geq  1.
$$
Note that $\alpha(\Phi_0)>1$ \, iff\, there are no parallel sides.

\begin{theorem}
For any polygon $P\in\mathscr{P}_{n}\setminus \mathscr{S}_{n,\infty}^\ast$, there is $\lambda_0>0$ such that if $\lambda(f)<\lambda_0$ and $f\in\BB$, then $\Phi_f$ has a generalized hyperbolic attractor with finitely many ergodic SRB measures and dense hyperbolic periodic points.
\end{theorem}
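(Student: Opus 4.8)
The plan is to deduce the theorem from Theorem~\ref{thm hyp attr} by reducing the hypothesis ``$P\in\mathscr{P}_n\setminus\mathscr{S}_{n,\infty}^\ast$'' to the checkable condition required there. First I would take $f_0=0$, so that $\Phi_{f_0}=\Phi_0$ collapses onto the slap map on $\{\theta=0\}$. Since $P$ has no orthogonal vertex connection, the Remark following the definition of $\mathscr{S}_{n,\infty}^\ast$ tells us that no vertex of $P$ has a forward slap-map orbit that ever hits another vertex (the acute-angle fixed points being excluded by condition~(d), which is exactly the degenerate case $p_0=\dots=p_{m'}$). Translating this back to the phase space $M$, it means precisely that $\Phi_0^k(S_1^+)\cap S_1^+=\emptyset$ for every $k\geq1$: an intersection point of $\Phi_0^k(S_1^+)$ with $S_1^+$ would be a point whose forward orbit hits a vertex after hitting another vertex-projection, yielding an orthogonal vertex connection of order $k+1$.

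Second, I would observe that $P\in\mathscr{P}_n\setminus\mathscr{S}_{n,\infty}^\ast$ in particular has no parallel sides facing each other (a pair of orthogonally-joined parallel sides would be an orthogonal vertex connection once extended to meet a vertex, or more simply such polygons are ruled out because $\PP\neq\emptyset$ forces period-two orbits between parallel sides and these generically do not occur — but I only need the weaker ``no parallel sides facing each other'', which is implied since an orbit bouncing perpendicularly between parallel sides and continued backwards/forwards reaches a vertex). Thus $\alpha(\Phi_0)=\min_i|\cos\varphi_i|^{-1}>1$ by the formula established just before the theorem statement, because all $\varphi_i\neq0$. Hence the quantity $\log p(S_1^+)/\log\alpha(\Phi_0)$ is finite, and I may pick an integer
\[
n_0>\frac{\log p(S_1^+)}{\log\alpha(\Phi_0)}.
\]
Since $\Phi_0^k(S_1^+)\cap S_1^+=\emptyset$ holds for \emph{all} $k\geq1$, it holds a fortiori for $1\leq k\leq n_0-1$, so the hypotheses of Theorem~\ref{thm hyp attr} are met with $f_0=0$ and this $n_0$.

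Third, I apply Theorem~\ref{thm hyp attr}: it furnishes $\delta>0$ such that for every $f\in\BB$ with $\lambda(f-f_0)=\lambda(f)<\delta$ the billiard map $\Phi_f$ has a hyperbolic attractor carrying countably many ergodic SRB measures. Setting $\lambda_0=\delta$ gives the main assertion. The density of hyperbolic periodic points, and the fact that the attractor is a \emph{generalized} hyperbolic attractor, then come for free: by Proposition~\ref{prop gen hyp att} the attractor $A$ is a generalized hyperbolic attractor because $P$ has no parallel sides facing each other, and by the remark after Theorem~\ref{Pesin criterium} (citing~\cite[Theorem~11]{Pesin92}) the hyperbolic points are dense in $A$ under the conditions verified in the proof of Theorem~\ref{thm bill p<a}, which Theorem~\ref{thm hyp attr} invokes.

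The main obstacle is the bookkeeping in the first step: making watertight the dictionary between ``orthogonal vertex connection of order $m$'' as defined combinatorially via the projections $\pi_i$, and the dynamical statement $\Phi_0^k(S_1^+)\cap S_1^+=\emptyset$, including the care needed about condition~(d) (excluding the degenerate all-equal sequences, which correspond to fixed points of the slap map at acute vertices and do \emph{not} obstruct hyperbolicity), and about trajectories that graze a side (the multivalued extension $\hat\Phi_{f,i}$) versus those that land in a side's interior. Once that correspondence is established cleanly, the rest is a direct citation of Theorem~\ref{thm hyp attr} together with the slap-map expansion formula.
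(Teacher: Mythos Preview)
Your proposal is correct and follows essentially the same approach as the paper's proof: both reduce to Theorem~\ref{thm hyp attr} with $f_0=0$ by translating the ``no orthogonal vertex connection'' hypothesis into $\Phi_0^k(S_1^+)\cap S_1^+=\emptyset$ for all $k\ge 1$, then picking $n$ large enough that $\alpha(\Phi_0)^n>p(S_1^+)$. You are in fact slightly more explicit than the paper about the intermediate step that $\alpha(\Phi_0)>1$ (equivalently, no parallel sides facing each other) and about where the density of hyperbolic periodic points and the finiteness of the SRB measures come from.
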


\begin{proof}
Notice that $\Phi_0^k(S)\cap S\neq \emptyset$ 
implies the existence of an  orthogonal vertex connection of order $k$.
Hence, if a polygon does not have any orthogonal vertex connection,
 then the corresponding slap map satisfies $\Phi_0^k(S)\cap S=\emptyset$ for every 
$k\geq 1$. Next observe that the number $p(S_1^+)$ depends only on the polygon $P$,
and not on the reflection law $f$. Thus taking $n\gg  {\log p(S_1^ +)}/{\log\alpha(\Phi_0)}$,
for small enough $\lambda_0>0$ and every $f\in\BB$ with $\lambda(f)<\lambda_0$ one has
\, $n > {\log p(S_1^+)}/{\log\alpha(\Phi_f)}$, and \, 
$\Phi_f^k(S_1^+)\cap S_1^+=\emptyset$, for every $1\leq k\leq n-1$.
The assumptions of Theorem~\ref{thm hyp attr} are met, and the theorem follows.
\end{proof}

\section{Regular polygons}
\label{sec:regpoly}


Throughout this section $P$ is a regular $d$-gon. We assume that the sides of the polygon are normalized to a unit size. The phase space of the billiard map is thus $$M=(0,N)\times\left(-\frac\pi 2,\frac\pi 2\right)\,.$$

\subsection{Phase space reduction}

The study of the dynamics in regular polygonal billiards can be simplified using the group of symmetries of the polygon. In the phase space this reduction can be achieved by simply identifying all sides of the polygon. So we define an equivalence relation $\sim$ on $M$ by $(s_1,\theta_1)\sim(s_2,\theta_2)$ if and only if $s_1-s_2\in\mathbb{Z}$. Let $$\widetilde{M}=(0,1)\times(-\pi/2,\pi/2)\,.$$ Clearly, the quotient space $M/\sim$ can be identified with $\widetilde{M}$, which we call the \textit{reduced phase space}. The induced billiard map $\phi_f$ on $\widetilde{M}$ is the \textit{reduced billiard map}. Notice that $M$ is a $d$-fold covering of $\widetilde{M}$ and that the reduced billiard map $\phi_f$ is a factor of $\Phi_f$.

\subsection{The reduced billiard map}

A straightforward computation reveals that the singular set of the reduced billiard map is 
$$
\widetilde{N}^{+}_{1} = \partial \widetilde{M} \cup \widetilde{S}^{+}_{1}\,,
$$
where $\widetilde{S}_1^+$ is the union of the graph of the curves
$$
\gamma_k(\theta)=\frac{\sin(\theta_{k-1}-\theta)\cos(\theta_k)}{\sin(\theta_{k-1}-\theta_{k})\cos(\theta)} \,,\quad k=2,\ldots,d-1\,,
$$
and  $\theta_k=\pi/2-k\pi/d$ which defines a partition of the interval $(-\pi/2,\pi/2)$ into $d$ subintervals. 
Let, $$ A_k=\left\{(s,\theta)\in \widetilde{M}\setminus\widetilde{N}_1^+\,:\, \gamma_{k+1}(\theta)<s<\gamma_{k}(\theta)\right\}\,,\quad k=1,\ldots, d-1\,.$$
Note that $\widetilde{M}\setminus \widetilde{N}^+_1 = \cup_{k=1}^{d-1} A_k$. 
 We then obtain the \textit{$k$-branch} of the reduced billiard map
\begin{equation}\label{eq:redmap}
\left.\phi_f\right|_{A_k}(s,\theta)=\left(\frac{\gamma_k(\theta)-s}{\gamma_k(\theta)-\gamma_{k+1}(\theta)},f(2\theta_k -\theta)\right)
\end{equation}
where $f$ is a contracting reflection law.

\begin{remark}\label{rem:oddf}
If the contracting reflection law $f$ is an odd function then $\phi_f$ commutes with the involution $T(s,\theta)=(1-s,-\theta)$, i.e. $$T\circ\phi_f=\phi_f\circ T\,.$$ Also note that $T(A_k)=A_{d-k}$.
\end{remark}

When $f=0$ we obtain the one-dimensional \textit{reduced slap map} $\phi_0$. If $P$ is an even $d$-gon then $\phi_0(s)=1-s$. Otherwise,
$$
\phi_0(s)= -\frac{1}{\beta}\,\left(s-\frac{1}{2}\right) +\varepsilon(s), 
$$
where $\beta = \cos(\frac{\pi}{d})$ and $\varepsilon\colon[0,1)\to \{0,1\}$ is the function
$$
\varepsilon(s)=
\begin{cases} 
1, &  s\geq \frac{1}{2} \\
0, & s < \frac{1}{2}.
\end{cases}
$$

\subsection{Regular polygons with an odd number of sides}
 
\begin{proposition}\label{slap map not pre-per}
For any odd number $d\geq 5$ the point $s=1/2$ is not a pre-periodic point of the reduced slap map $\phi_0$.
\end{proposition}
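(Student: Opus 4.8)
The plan is to work with the explicit formula for the reduced slap map $\phi_0$ on an odd $d$-gon, namely $\phi_0(s) = -\frac1\beta\left(s-\frac12\right) + \varepsilon(s)$ with $\beta = \cos(\pi/d)$, and to argue that the full orbit of $s = 1/2$ forces a forbidden arithmetic relation. First I would note that $s = 1/2$ is the unique point where the two affine branches of $\phi_0$ meet, and that $\phi_0(1/2) = 1/2 - \frac1\beta\cdot 0 + \varepsilon(1/2) = 1 \notin [0,1)$; more to the point, the map is genuinely discontinuous there, so ``the'' forward orbit of $1/2$ must be interpreted via the two one-sided limits $\phi_0(1/2^{\pm})$. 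The cleanest formulation: $1/2$ is pre-periodic means there exist $m \ge 0$ and $n \ge 1$ with $\phi_0^{m+n}(1/2) = \phi_0^m(1/2)$, where each application uses one of the two branches; equivalently, some one-sided iterate of $1/2$ lands back on $1/2$ or enters a cycle.

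The key step is to track how the affine branches compose. Each branch of $\phi_0$ has the form $s \mapsto -\frac1\beta s + c$ for a constant $c$ depending only on which branch (either $c = \frac{1}{2\beta}$ or $c = \frac{1}{2\beta}+1$, up to normalization). Hence any finite composition of $k$ branches sends $s$ to $(-\beta)^{-k} s + P_k(\beta^{-1})$, where $P_k$ is a polynomial in $\beta^{-1}$ with integer coefficients (coming from the $\varepsilon$-shifts), of degree at most $k$. A periodicity relation $\phi_0^{m+n}(1/2) = \phi_0^m(1/2)$ therefore reduces, after subtracting, to an identity of the form $\left((-\beta)^{-(m+n)} - (-\beta)^{-m}\right)\cdot\frac12 = Q(\beta^{-1})$ for some integer polynomial $Q$, i.e. $\beta^{-1} = 1/\cos(\pi/d)$ satisfies a polynomial equation with integer (equivalently rational) coefficients of a controlled type. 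The crux is then a number-theoretic fact: for odd $d \ge 5$, $1/\cos(\pi/d)$ is an algebraic number whose minimal polynomial is not of the prohibited shape — concretely, one uses that $2\cos(\pi/d)$ is an algebraic integer of degree $\varphi(2d)/2 \ge 2$ for $d \ge 5$ (Niven's theorem rules out $\cos(\pi/d)$ being rational), so $1/\cos(\pi/d)$ is irrational, and one checks the specific relation cannot hold by a degree or conjugate-bounding argument.

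I would organize the write-up as: (i) reduce pre-periodicity to the existence of integers $m \ge 0$, $n\ge 1$ and a sign/shift pattern giving the displayed affine identity; (ii) observe the coefficient $(-\beta)^{-(m+n)} - (-\beta)^{-m}$ is nonzero (since $|\beta| < 1$ forces $|(-\beta)^{-(m+n)}| > |(-\beta)^{-m}|$), so the identity pins down $1/2$ as a specific rational function of $\beta$; (iii) combine with the requirement that all intermediate iterates lie in $[0,1)$ — this is what makes the shift pattern $\varepsilon$ at each step determined, and lets one bound everything — and derive a contradiction with the irrationality / algebraic degree of $1/\cos(\pi/d)$. The main obstacle I anticipate is step (iii): controlling the admissible itineraries $\varepsilon$ and showing the resulting finitely many candidate identities are all incompatible with the arithmetic of $\cos(\pi/d)$; one likely needs either a clean invariant (e.g. all iterates of $1/2$ lie in $\mathbb{Z}[\beta^{-1}]\cdot\frac12 + \mathbb{Z}[\beta^{-1}]$ but are constrained to $[0,1)$, forcing a contradiction via a Galois conjugate that is pushed outside $[0,1)$), or a direct minimal-polynomial computation exploiting that $\beta = \cos(\pi/d)$ is a unit-scaled algebraic integer with no rational relations of small degree. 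A Galois-conjugate argument — apply an automorphism sending $\beta = \cos(\pi/d)$ to $\cos(k\pi/d)$ with $|\cos(k\pi/d)|$ large or small, and observe the supposed orbit relation is not preserved under the boundedness constraint — is, I expect, the slick route and the part that requires the most care.
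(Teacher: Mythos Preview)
Your reduction to a polynomial identity in $\beta=\cos(\pi/d)$ is exactly the right start, and matches the paper's opening move: writing the $n$-th iterate explicitly and observing that at $s=1/2$ the linear part $(-\beta)^{-n}(s-\tfrac12)$ vanishes, so a relation $\phi_0^n(1/2)=\phi_0^k(1/2)$ becomes $Q(\beta)=0$ for a polynomial $Q\in\Zz[x]$ with all coefficients in $\{-2,-1,0,1,2\}$ and constant term $Q(0)=\pm1$. Where your proposal has a genuine gap is in step~(iii). You suspect the contradiction will come from controlling the admissible itineraries $\varepsilon_j$ together with the constraint that all iterates lie in $[0,1)$, or from a Galois-conjugate boundedness argument. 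Neither of these is what actually works, and neither is needed: the paper's argument is insensitive to the itinerary and never uses that the iterates stay in $[0,1)$.

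The clean finishing move is purely algebraic. The minimal polynomial of $\beta=\cos(\pi/d)$ over $\Qq$, once renormalised to lie in $\Zz[x]$ with unit constant term (call it $\tilde\Psi_{2d}$), has leading coefficient $2^{\phi(2d)/2}\ge 4$ for odd $d\ge5$; this follows from the Chebyshev factorisation $T_{s+1}-T_{s-1}=\prod_{e\mid 2d}\tilde\Psi_e$. Now if $Q(\beta)=0$ then $\tilde\Psi_{2d}$ divides $Q$ in $\Qq[x]$, and since $\tilde\Psi_{2d}(0)=\pm1$, Gauss's lemma forces the quotient to be in $\Zz[x]$. But then the leading coefficient of $\tilde\Psi_{2d}$, which is at least $4$, must divide the leading coefficient of $Q$, which is $\pm1$ or $\pm2$: contradiction. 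So the arithmetic obstruction is not the \emph{degree} of $\beta$ (as you hint) but the \emph{leading coefficient} of its integral minimal polynomial --- equivalently, that $\beta$ is far from being an algebraic integer. Your Galois-conjugate route is unlikely to succeed directly, since all conjugates $\cos(k\pi/d)$ lie in $(-1,1)$ and the orbit constraint $[0,1)$ has no Galois meaning.
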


\begin{proof}
Let us write $\varepsilon_j(s)=\varepsilon(\phi_0^j(s))$ and $\delta_j(s)=2\,\varepsilon_j(s)-1$. 
Notice that we always have $\delta_j(s)\in\{-1,1\}$.
Then, by induction we get
$$ 
\phi_0^ n(s)=  \frac{1}{2} +\frac{(-1)^n}{\beta^ n}\,\left(s-\frac{1}{2}\right) + 
\frac{1}{2\,\beta^ {n-1}}\, \sum_{j=0}^{n-1} 
(-1)^{n-j-1}\,\delta_{j}(s)\,\beta^{j}.
$$
Denote by $P_n(s,\beta)$ the tail term above. 
More precisely, set
$$ 
P_n(s,\beta)= \sum_{j=0}^{n-1} 
(-1)^{n-j-1}\,\delta_{j}(s)\,\beta^{j}.
$$

Assume, by contradiction, that $\phi_0^n(1/2)=\phi_0^ k(1/2)$ for some $n>k\geq 0$. 
Then,
$$
\frac{1}{\beta^ {n-1}}\,P_n\left(1/2,\beta\right)=\frac{1}{\beta^ {k-1}}\,P_k\left(1/2,\beta\right),
$$
which is equivalent to
$$
P_n\left(1/2,\beta\right)-\beta^{n-k} P_k\left(1/2,\beta\right)=0.
$$
The last expression, hereafter denoted by $Q(\beta)$, is a polynomial in $\beta$ of degree $n-1$ with all coefficients in $\{-2,-1,0,1,2\}$.
Moreover, since $n>k$, $Q(0)=P_n(\frac{1}{2},0)=\pm 1$.
Because $\tilde{\Psi}_{2d}(x)$ is the minimal polynomial of $\beta$ (see the Appendix)
there is a factorization $Q(x)=\tilde{\Psi}_{2d}(x)\, S(x)$ in $\Q[x]$.
Since $Q(x)\in\Z[x]$, Gauss lemma implies there is some $c\in\Q\setminus\{0\}$ such that
$c\,\tilde{\Psi}_{2d}(x)\in\Z[x]$ and $c^{-1}\, S(x)\in\Z[x]$.
Since, by Corollary~ \ref{Psin:coef}, $\tilde{\Psi}_{2d}(x)$ has a unit constant term $\tilde{\Psi}_{2d}(0)=\pm 1$, 
it follows that $c\in\Z$, and hence  $S(x)= c\,(c^{-1} S(x))\in\Z[x]$.
Thus, the leading coefficient of $Q(x)$ must be a multiple of
the leading coefficient $\tilde{\Psi}_{2d}(x)$.
But the first is $\pm 1$ or $\pm 2$, while by Corollary~ \ref{Psin:coef}
the second has leading coefficient at least $4$, which is obviously impossible.
\end{proof}

\begin{corollary}
Every odd sided regular polygon has the no orthogonal vertex connection property (see Section~\ref{sec:Generic} for the definition). 
\end{corollary}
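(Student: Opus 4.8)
\textit{Proof proposal.} The plan is to translate the statement into a property of the one–dimensional reduced slap map $\phi_0$ and then quote Proposition~\ref{slap map not pre-per}. By the characterization of orthogonal vertex connections recalled in Section~\ref{sec:Generic}, a polygon has an orthogonal vertex connection exactly when the forward slap orbit of one of its vertices eventually lands on a vertex, after discarding the trivial fixed point that can occur only at an acute–angled vertex. All interior angles of a regular $d$-gon with $d\geq5$ are obtuse, so that trivial case does not arise, and it suffices to prove that no vertex has such an orbit. Since $M$ is a $d$-fold cover of $\widetilde M$ on which the slap map descends to $\phi_0$, a vertex of the polygon is represented in $\widetilde M$ by $s=0$ or $s=1$, and these two representatives are interchanged by the involution $T(s)=1-s$, which conjugates $\phi_0$ to itself (Remark~\ref{rem:oddf}, as $f=0$ is odd). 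Hence it is enough to control the forward $\phi_0$-orbit of $s=0$.

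First I would read off from the explicit formula $\phi_0(s)=-\beta^{-1}(s-\tfrac12)+\varepsilon(s)$, with $\beta=\cos(\pi/d)\in(\tfrac12,1)$ for $d\geq5$, that $\phi_0$ maps $(0,1)\setminus\{\tfrac12\}$ into $(0,1)$ while $\phi_0(\tfrac12)=1$; in particular $s=\tfrac12$ is the unique point of $(0,1)$ whose slap image is a vertex, i.e. $S=\{\tfrac12\}$ in the reduced picture. Consequently $\phi_0(0)=\beta^{-1}/2\in(0,1)$ and the orbit $0,\phi_0(0),\phi_0^{\,2}(0),\dots$ stays in $(0,1)$ until it hits $\tfrac12$ (if ever), after which the next iterate is the vertex $s=1$. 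Thus ``the vertex orbit reaches a vertex'' is equivalent to ``$\phi_0^{\,j}(0)=\tfrac12$ for some $j\geq1$''.

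Next I would combine $\phi_0(\tfrac12)=1=T(0)$ with $T\circ\phi_0=\phi_0\circ T$ to obtain, by induction, $\phi_0^{\,i+1}(\tfrac12)=T\!\left(\phi_0^{\,i}(0)\right)$ for all $i\geq0$. Therefore, if $\phi_0^{\,j}(0)=\tfrac12$ for some $j\geq1$ (the case $j=0$ being impossible since $0\neq\tfrac12$), then $\phi_0^{\,j+1}(\tfrac12)=T(\tfrac12)=\tfrac12$, so $\tfrac12$ is a periodic, hence pre-periodic, point of $\phi_0$, contradicting Proposition~\ref{slap map not pre-per}. This shows that no vertex orbit reaches a vertex, so every regular $d$-gon with $d\geq5$ odd has the no orthogonal vertex connection property. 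The case $d=3$ is handled directly: the only orthogonal projection of a vertex of an equilateral triangle onto the interior of a side is the midpoint of the opposite side, and projecting such a midpoint onto either of the other two sides never yields a vertex; equivalently, all three vertices are acute, so the only slap ``orbit'' from a vertex is the excluded fixed point.

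The step requiring the most care is the first one: matching the geometric definition of an orthogonal vertex connection with a clean statement about the scalar map $\phi_0$, that is, justifying both that for a regular polygon an orthogonal vertex connection is present precisely when the reduced slap orbit of $s=0$ meets $\{0,1\}$, and that the single point $\tfrac12$ is the only gateway into $\{0,1\}$. Once the explicit piecewise-affine formula for $\phi_0$ is in hand, the remaining ingredients — the image analysis of $\phi_0$ and the conjugacy identity $\phi_0^{\,i+1}(\tfrac12)=T(\phi_0^{\,i}(0))$ — are short and elementary.
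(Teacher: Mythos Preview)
Your argument is correct and follows the same strategy as the paper: reduce to the one-dimensional reduced slap map $\phi_0$, and for $d\geq 5$ odd show that an orthogonal vertex connection forces $s=1/2$ to be (pre-)periodic, contradicting Proposition~\ref{slap map not pre-per}. The paper's proof states this implication in one sentence; you supply the mechanism, namely the identity $\phi_0^{\,i+1}(1/2)=T(\phi_0^{\,i}(0))$ coming from $\phi_0(1/2)=1=T(0)$ and the $T$-conjugacy, which upgrades pre-periodicity to actual periodicity of $1/2$. For $d=3$ your second clause (``all three vertices are acute, so the only slap orbit from a vertex is the excluded fixed point'') is exactly the paper's argument; your first clause about projecting the midpoint only checks connections of length $\leq 2$ and is not sufficient on its own, so you should rely on the second clause as the proof for the triangle.
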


\begin{proof}
Any equilateral triangle has this property because
its vertexes correspond to fixed points of the slap map.
Moreover, for any odd number $d\geq 5$, if the $d$-sided regular
polygon had a `orthogonal vertex connection' this would
imply that $s=1/2$ is a pre-periodic point of the reduced slap map $\phi_0$, thus contradicting proposition~\ref{slap map not pre-per}.
\end{proof}

\begin{theorem}
Consider a regular polygon with an odd number of sides.
There is $\lambda_0>0$ such that if $\lambda(f)<\lambda_0$ and $f\in\BB$, then the billiard map $\Phi_f$ has a generalized hyperbolic attractor with finitely many ergodic SRB measures and dense hyperbolic periodic points.
\end{theorem}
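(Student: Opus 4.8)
The plan is to deduce this theorem directly from the machinery assembled in Sections~\ref{sec:SRB}--\ref{sec:regpoly}, treating it essentially as a corollary of Theorem~\ref{thm hyp attr} combined with the corollary immediately preceding it. First I would recall that a regular polygon with an odd number of sides has no parallel sides facing each other, so $\PP=\emptyset$ and, by Corollary~\ref{co:delta-positive} (or Proposition~\ref{prop gen hyp att}), the attractor $A$ of $\Phi_f$ is a generalized hyperbolic attractor for every $f\in\RR^k_1$. Next, I would invoke the Corollary stating that every odd-sided regular polygon has the no orthogonal vertex connection property. Unwinding the definitions, this means that the reduced slap map $\phi_0$ (equivalently, the unreduced slap map $\Phi_0$ on $\{\theta=0\}$) satisfies $\Phi_0^k(S)\cap S=\emptyset$ for all $k\geq1$, which lifts to $\Phi_0^k(S_1^+)\cap S_1^+=\emptyset$ for all $k\geq1$ on the full phase space, exactly the hypothesis of Theorem~\ref{thm hyp attr} with $f_0=0$.

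The second step is the quantitative one. The number $p(S_1^+)$ depends only on the polygon $P$ (it is $2$ for a convex polygon, in particular for a regular one), and the minimal expansion rate of the slap map is $\alpha(\Phi_0)=\min_i|\cos\varphi_i|^{-1}>1$ since there are no parallel sides. Hence I would choose $n$ to be any integer with $n>\log p(S_1^+)/\log\alpha(\Phi_0)$ and observe that, by construction, the no-orthogonal-vertex-connection property gives $\Phi_0^k(S_1^+)\cap S_1^+=\emptyset$ for $1\leq k\leq n-1$ (indeed for all $k$). Theorem~\ref{thm hyp attr} then furnishes a $\delta>0$ such that for every $f\in\BB$ with $\lambda(f-0)=\lambda(f)<\delta$, the billiard map $\Phi_f$ has a hyperbolic attractor with countably many ergodic SRB measures. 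Setting $\lambda_0=\delta$ concludes the existence part.

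To upgrade ``countably many'' to ``finitely many'' and to obtain dense hyperbolic periodic points, I would appeal to the two remarks following Theorem~\ref{thm bill p<a}: the estimate~\eqref{H4}, which holds under the hypotheses just verified, allows one to invoke Sataev's result \cite[Theorem~5.15]{Sataev92} giving finiteness of the number of ergodic SRB measures, while \cite[Theorem~11]{Pesin92} gives density of hyperbolic points in the attractor. Since $A$ is a generalized hyperbolic attractor and, being a regular polygon billiard with $\PP=\emptyset$, in fact $A$ is regular (it contains periodic orbits of period greater than two, which are hyperbolic by Proposition~\ref{prop:periodic-points}), all the hypotheses of these cited results are in place. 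Assembling these statements yields precisely the conclusion.

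The main obstacle is not really in the argument but in verifying that the cited inputs truly apply verbatim: one must check that the no-orthogonal-vertex-connection property for the \emph{reduced} slap map transfers correctly to the condition $\Phi_0^k(S_1^+)\cap S_1^+=\emptyset$ on the \emph{full} phase space $M$ (using that $M$ is a $d$-fold cover of $\widetilde M$ and $\phi_0$ is a factor of $\Phi_0$), and that the degenerate case $f_0=0$ of Theorem~\ref{thm hyp attr} is genuinely covered — which it is, since that theorem explicitly allows $f_0=0$. Once these bookkeeping points are settled, the proof is a short chain of citations; no new estimates are required.
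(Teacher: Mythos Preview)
Your proposal is correct and follows essentially the same route as the paper's proof, which is a two-line argument citing Theorem~\ref{thm hyp attr} and Proposition~\ref{slap map not pre-per} (the paper simply notes that the singular set of the reduced slap map is the midpoint $s=1/2$, whose non--pre-periodicity is exactly what Proposition~\ref{slap map not pre-per} provides). You route through the no-orthogonal-vertex-connection Corollary instead of citing Proposition~\ref{slap map not pre-per} directly, but that Corollary is itself an immediate consequence of the proposition, so the logical content is identical; your extra care about the finiteness upgrade via Sataev and the density of hyperbolic periodic points via \cite[Theorem~11]{Pesin92} makes explicit what the paper leaves implicit in its reference to Theorem~\ref{thm hyp attr} (cf.\ the analogous generic-polygon theorem in Section~\ref{sec:Generic}).
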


\begin{proof}
The set of singularities of the slap map includes the middle points on each of the sides of the polygon. The claim is a consequence of Theorem~\ref{thm hyp attr} and Proposition~\ref{slap map not pre-per}.
\end{proof}

Notice that for the equilateral triangle Theorem~\ref{thm acute triangles} is an improvement of the above result.

\subsection{Regular polygons with an even number of sides}

The approach used to prove the existence of SRB measures for odd sided regular polygons does not work with even sided regular polygons, because these polygons have parallel sides and so the slap map is not expanding. However, this fact does not prevent the existence of hyperbolic attractors as proved for the square in Section~\ref{sec:rectangles} and shown by numerical experiments in \cite{MDDGP12}.

In this section we show that, under certain conditions on the contracting reflection law, every orbit whose angle of incidence is sufficiently small will eventually be trapped between two parallel sides. 

Consider a regular polygon with an even number $d\geq 6$ of sides. The square is considered in Section~\ref{sec:rectangles}. 

\begin{proposition}\label{prop:evenN}
Let $f\in\RR^k_1$ be an odd function such that $f'>0$, $\lambda(f)\leq1/2$ and $f(\delta\frac{2\pi}{d})\leq\delta f(\frac{2\pi}{d})$ for every $0\leq\delta\leq1$. Then there exists a positive constant $C$ such that every orbit of $\Phi_f$ having more than $C$ collisions between parallel sides belongs to the basin of attraction of $\PP$.
\end{proposition}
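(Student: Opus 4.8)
The plan is to track what happens to an orbit while it bounces between a fixed pair of parallel sides, say $L_0$ and $L_1$, and to show that after sufficiently many such consecutive collisions the orbit can no longer escape to a non-parallel side; it must then stay forever between $L_0$ and $L_1$, and such an orbit converges to the period-two point (an element of $\PP$) by Proposition~\ref{prop:periodic-points}(2). So the first step is to recall from \eqref{eq:angle-after} that when two consecutive collisions occur at \emph{parallel} sides one has $\delta(L_0,L_1)=\pi$, hence $\bar\theta_{i+1}=-\theta_i$, and therefore $\theta_{i+1}=f(-\theta_i)=-f(\theta_i)$ since $f$ is odd. Thus along a run of collisions between the same parallel pair the angles satisfy $|\theta_{i+1}|=|f(\theta_i)|\le\lambda(f)\,|\theta_i|\le\tfrac12|\theta_i|$, so $|\theta_i|$ decays geometrically: after $C$ collisions between parallel sides, $|\theta_C|\le 2^{-C}|\theta_0|<2^{-C}\cdot\tfrac{\pi}{2}$.

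Next I would translate "the orbit does not escape to a non-parallel side" into a condition on the base coordinate $s$. The sides adjacent to $L_0$ (and to $L_1$) make angles $\tfrac{2\pi}{d}$ with it; a trajectory leaving $L_0$ with small angle $\theta$ travels essentially parallel to $L_1$ and, by elementary trigonometry, lands on $L_1$ at a point whose distance from the foot of the perpendicular is controlled by $|\theta|$ times the inter-side distance — more precisely, the displacement along the side per bounce is of order $\tan|\theta|\le 2|\theta|$ (for $|\theta|\le 1/2$, say), times the width $w$ between the parallel sides. The hypothesis $f(\delta\tfrac{2\pi}{d})\le\delta f(\tfrac{2\pi}{d})$ for $0\le\delta\le1$ is there to guarantee that, once $|\theta_i|$ is below the threshold $\tfrac{2\pi}{d}$, the reflected angle $f(2\theta_k-\theta)$ arising in the non-parallel branches stays small enough that the only branch the orbit can fall into is again the "parallel" branch; concretely this concavity-type bound keeps the image angle from growing back above the escape threshold. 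Summing the geometric series $\sum_i 2|\theta_i| w\le 2w|\theta_0|\sum_i 2^{-i}\le 4w\cdot\tfrac{\pi}{2}$, and comparing with the minimal distance from a generic landing point to the endpoints of $L_1$ where a switch to a neighbouring side becomes possible, one obtains a constant $C$ (depending only on the polygon and on $\lambda(f)$, $f(\tfrac{2\pi}{d})$) with the property that if the orbit has already made $C$ collisions between $L_0$ and $L_1$ then the total remaining lateral drift is too small to reach a vertex, so all future collisions remain between $L_0$ and $L_1$.

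Finally, since $d$ is finite there are only finitely many parallel pairs, so taking the maximum of the constants produced for each pair gives a single uniform $C$. An orbit with more than $C$ collisions between \emph{some} parallel pair is, by the above, eventually confined between that pair, its angles tend to $0$, and its base points converge to the foot of the common perpendicular; hence the orbit lies in the basin of attraction of $\PP$, and the orbit itself is forward asymptotic to a point of $\PP$, which is what is claimed.

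I expect the main obstacle to be the bookkeeping in the second step: making the "lateral drift" estimate genuinely uniform, i.e.\ ruling out the borderline scenario in which the orbit, although its angle is shrinking, creeps toward a vertex of $L_1$ slowly enough that after escaping it re-enters a different parallel pair and starts drifting again. This is precisely why one needs the hypotheses $\lambda(f)\le 1/2$ (to force genuine geometric decay of $|\theta_i|$, so the total drift over \emph{all} future parallel runs is a convergent geometric series rather than merely bounded on each run) and $f(\delta\tfrac{2\pi}{d})\le\delta f(\tfrac{2\pi}{d})$ (to prevent the angle from being amplified when a non-parallel reflection is interposed); checking that these two conditions together close off every escape route is the delicate part of the argument.
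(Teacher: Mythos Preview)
Your argument has a genuine gap in the second step. You claim that after $C$ parallel collisions ``the total remaining lateral drift is too small to reach a vertex, so all future collisions remain between $L_0$ and $L_1$.'' This is false: nothing prevents the orbit from being arbitrarily close to a vertex of $L_1$ after those $C$ collisions, so an arbitrarily small remaining drift can still carry it out. There is no uniform positive lower bound on the ``minimal distance from a generic landing point to the endpoints of $L_1$'' with which to compare your geometric series.

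The paper's proof embraces the scenario you try to rule out. Working in the reduced phase space, it defines an explicit trapping region $B=\{(s,\theta):\sigma_\infty(\theta)-1<s<\sigma_\infty(\theta)\}$ whose forward orbit is confined between parallel sides, and a small escape region $\Delta_\epsilon = (A_{q-1}\cup A_{q+1})\cap\{|\theta|<\epsilon\}$ through which any orbit leaving a long parallel run must pass. The heart of the proof is the claim $\phi_f^{2}(\Delta_\epsilon)\subset B$: an orbit that escapes with small angle is, after exactly two non-parallel reflections, thrown back into the trapping region. This is checked by computing the two iterates of the corner point $(1,0)$ under the continuous extension of the branch $\phi_f|_{A_{q-1}}$ and verifying the inequality
\[
\cot\!\left(\frac{\pi}{2q}\right)\sum_{n\ge 0}\tan\!\bigl(f^{\,n+1}(\hat\theta)\bigr)
< 1+\tan(\hat\theta)\sin\!\left(\frac{\pi}{q}\right)+\cos\!\left(\frac{\pi}{q}\right),
\qquad \hat\theta=\frac{\pi}{q}-f\!\left(\frac{\pi}{q}\right).
\]
It is precisely here, in bounding the left-hand series, that the hypothesis $f(\delta\tfrac{2\pi}{d})\le\delta f(\tfrac{2\pi}{d})$ is used: it gives $f^{\,n+1}(\hat\theta)\le (\pi/q)\lambda_q^{\,n+1}(1-\lambda_q)$ with $\lambda_q=f(\pi/q)/(\pi/q)\le 1/2$, so the series sums to at most $2\lambda_q\le 1$. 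Your reading of this hypothesis as ``preventing the angle from being amplified at a non-parallel reflection'' is not what happens --- the angle \emph{is} amplified (from near $0$ to near $f(\pi/q)$); the hypothesis instead controls the tail of the drift series \emph{after} the two non-parallel bounces. To repair your argument you would have to carry out this two-step re-trapping computation, not avoid it.
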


\begin{proof}
In the reduced phase space the attractor $ \mathcal{P} $ is the interval $(0,1)$. Moreover, since the contracting reflection law is an odd function, the set
$$
B=\left\{(s,\theta)\in \widetilde{M}\,:\, \sigma_\infty(\theta)-1<s<\sigma_\infty(\theta)\right\}\,,
$$
where 
$$
\sigma_\infty(\theta)=1-\cot\left(\frac{\pi}{d}\right)\sum_{n=0}^\infty\tan(f^n(\theta))\,,
$$
is a trapping region, i.e., $\phi_f(B)\subset\mathrm{int}(B)$. In fact, positive semi-orbits starting in $B$ correspond to billiard trajectories which are trapped between any two parallel sides. 

\begin{figure}
  \begin{center}
    \includegraphics[width=3in]{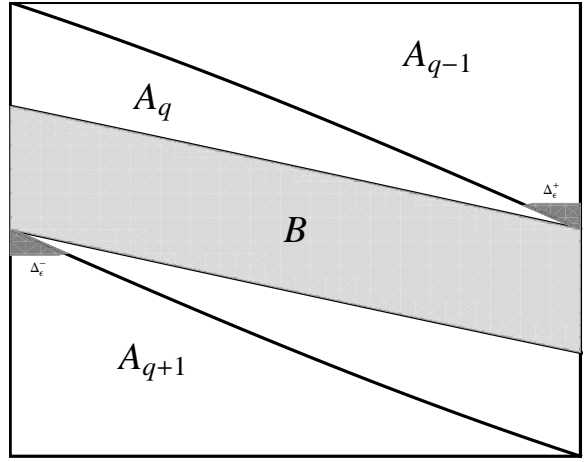}
  \end{center}
  \caption{The set $\Delta_\epsilon$.}
  \label{figure_delta}
\end{figure}

Given $\epsilon>0$, let $H_\epsilon$ denote the horizontal strip of width $\epsilon$ around $\theta=0$, i.e. $H_\epsilon=\left\{(s,\theta)\in\widetilde{M}:\left|\theta\right|<\epsilon\right\}$. Also let
$$
\Delta_\epsilon^{\pm}=A_{q\mp1}\cap H_\epsilon\,\quad\text{and}\quad\Delta_\epsilon=\Delta_\epsilon^-\cup\Delta_\epsilon^+\,,
$$
where $d=2 q$. These sets are depicted in Fig.~\ref{figure_delta}. 
We claim that there exists an $\epsilon>0$ such that \begin{equation}\label{eq:phideltaclaim}\phi_f^2(\Delta_\epsilon)\subset B\,.
\end{equation}
Suppose that the claim holds true. It is clear that for $\epsilon$ sufficiently small $\Delta_\epsilon\subset\phi_f(A_q\setminus\mathrm{cl}(B))$. Thus, take
$$
C=\min\left\{n\geq1\colon \phi_f^{-n}(\Delta_\epsilon)\nsubseteq A_q\right\}\,.
$$

If the positive semi-orbit of $x\in \widetilde{M}$ has more than $C$ collisions between parallel sides then, by the definition of $C$, there exists an $n\geq1$ such that $\phi_f^n(x)\in\Delta_\epsilon$. It follows from the claim that $\phi_f^{n+2}(x)\in B$. Thus proving the proposition. 

Therefore, it remains to prove \eqref{eq:phideltaclaim}. Since $f$ is odd, we have by Remark~\ref{rem:oddf} that $\phi_f\circ T= T\circ \phi_f$. Thus, it is sufficient to prove that there exists an $\epsilon>0$ sufficiently small such that $\phi_f^2(\Delta_\epsilon^+)\subset B$.

For $q\geq3$ the map $\left.\phi_f\right|_{A_{q-1}}$ extends continuously to the boundary of $A_{q-1}$. Indeed, by \eqref{eq:redmap} we have
\begin{equation}\label{eq:phifcontext}
\left.\phi_f\right|_{A_{q-1}}(s,\theta)=\left(\cot\left(\frac{\pi}{2q}\right)\tan\left(g(\theta)\right)-s\cos(\theta)\sec(g(\theta)),f(g(\theta))\right)\,,
\end{equation}
where $g(\theta)=\frac{\pi}{q}-\theta$. To simplify the notation, let us denote the unique extension of $\left.\phi_f\right|_{A_{q-1}}$ by $\phi_{f,q-1}$. 

In order to prove the inclusion $\phi_f^2(\Delta_\epsilon^+)\subset B$ for $\epsilon$ sufficiently small, we only need to trace two iterates of the point $(s,\theta)=(1,0)$ under the continuous extension $\phi_{f,q-1}$. In fact, if $\phi_{f,q-1}(1,0)\in \mathrm{cl}(A_{q-1})$ and $\phi_{f,q-1}^2(1,0)\in B$ then by continuity of $\phi_{f,q-1}$, the hypothesis $f'>0$ and the fact that $\text{dist}(\Delta_\epsilon^+,(1,0))\rightarrow 0$ as $\epsilon\rightarrow 0$ we obtain the desired inclusion.  

In the following we compute two iterates of the point $(s,\theta)=(1,0)$. Taking into account \eqref{eq:phifcontext}, we have $$\phi_{f,q-1}(1,0)=\left(1,f(\pi/q)\right)\,.$$ Since $\lambda(f)\leq 1/2$ we have that $\phi_{f,q-1}(1,0) \in \mathrm{cl}(A_{q-1})$. Now we show that $\phi_{f,q-1}^2(1,0)\in B$. A simple computation reveals that 
\begin{equation*}
\phi_{f,q-1}^2(1,0)=\left(\tan(\hat{\theta})\left(\cot\left(\frac{\pi}{2q}\right)-\sin\left(\frac{\pi}{q}\right)\right)-\cos\left(\frac{\pi}{q}\right),f(\hat{\theta})\right) 
\end{equation*}
where $\hat{\theta}=g(f(\frac{\pi}{q}))$. Taking into account the definition of the set $B$, the inclusion $\phi_{f,q-1}^2(1,0)\in B$ holds if and only if
\begin{equation}\label{ineqB}
\cot\left(\frac{\pi}{2q}\right)\sum_{n=0}^\infty\tan(f^{n+1}(\hat{\theta}))<1+\tan(\hat{\theta})\sin\left(\frac{\pi}{q}\right)+\cos\left(\frac{\pi}{q}\right)\,.
\end{equation}
Let $\lambda_q=f(\pi/q)/(\pi/q)$. Note that $0\leq\lambda_q\leq 1/2$. Since $f(\delta\frac{\pi}{q})\leq \delta f(\frac{\pi}{q})$ for every $0\leq\delta\leq 1$ we obtain the following estimate
$$
f^{n+1}(\hat{\theta})=f^{n+1}\left(\frac{\pi}{q}(1-\lambda_q)\right)\leq \frac{\pi}{q}\lambda_q^{n+1}(1-\lambda_q)\,,\quad n=0,1,\ldots
$$
Thus,
$$
\tan(f^{n+1}(\hat{\theta}))\leq2\tan\left(\frac{\pi}{2q}\right)\lambda_q^{n+1}(1-\lambda_q)\,,
$$
by convexity of the tangent function. Using the previous estimate we get,
$$
\cot\left(\frac{\pi}{2q}\right)\sum_{n=0}^\infty\tan(f^{n+1}(\hat{\theta}))\leq 2\lambda_q\leq1\,.
$$
Since $\tan(\hat{\theta})\sin(\pi/q)+\cos(\pi/q)>0$, inequality \eqref{ineqB} holds. 
\end{proof}

\begin{corollary}
Under the assumptions of Proposition~\ref{prop:evenN}, every invariant set $ \Sigma $ of $ \Phi_{f} $ not intersecting the basin of attraction of $ \PP $ is uniformly hyperbolic. \end{corollary}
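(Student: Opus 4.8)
The plan is to reduce this corollary to Proposition~\ref{pr:delta-zero} by verifying Property~(A) for any such invariant set. The key observation is that the conclusion of Proposition~\ref{prop:evenN}, once contraposed, says precisely that an orbit avoiding the basin of attraction of $\PP$ can undergo only boundedly many consecutive collisions between parallel sides, and Property~(A) is exactly a uniform bound of this kind.

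Concretely, let $C>0$ be the constant furnished by Proposition~\ref{prop:evenN}, and let $\Sigma$ be an invariant set that does not meet the basin of attraction of $\PP$. I would show that $\Sigma$ has Property~(A) with $m=C$. Suppose otherwise: then $\Sigma$ contains a block of more than $C$ consecutive collisions occurring between parallel sides of $P$. Because $\Sigma\subset D_{f}$, each point of this block has a complete bi-infinite orbit, and invariance of $\Sigma$ places that whole orbit inside $\Sigma$. That orbit exhibits more than $C$ consecutive collisions between parallel sides, so Proposition~\ref{prop:evenN} forces it into the basin of attraction of $\PP$ --- contradicting the choice of $\Sigma$. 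Hence $\Sigma$ satisfies Property~(A), and Proposition~\ref{pr:delta-zero} immediately yields that $\Sigma$ is uniformly hyperbolic.

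I do not expect a substantive obstacle here; the argument is essentially a contrapositive of Proposition~\ref{prop:evenN} fed into Proposition~\ref{pr:delta-zero}. The one place calling for a little care is the translation between the counting of ``consecutive collisions between parallel sides'' used in Definition~\ref{de:bounded} and the hypothesis ``more than $C$ collisions between parallel sides'' of Proposition~\ref{prop:evenN} (which, in its proof, refers to consecutive bounces staying in the branch $A_q$ of the reduced map), together with the remark that the relevant object is a genuine orbit contained in $\Sigma$ and not merely a finite segment --- both points being settled by the very definition of an invariant set, which is contained in $D_{f}$ and hence consists of points with full semi-orbits in both time directions.
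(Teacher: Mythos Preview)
Your argument is correct and follows essentially the same route as the paper: contrapose Proposition~\ref{prop:evenN} to obtain Property~(A) for $\Sigma$ with $m=C$, then invoke Proposition~\ref{pr:delta-zero}. The paper's proof is terser but logically identical, and your remarks on the meaning of ``consecutive collisions'' and on $\Sigma\subset D_f$ are the right sanity checks.
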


\begin{proof}
Since $ \Sigma $ does not intersect the basin of attraction of $\PP$ then by Proposition~\ref{prop:evenN} there exists a constant $C>0$ such that the positive semi-orbit of every point $x\in \Sigma$ has no more than $C$ collisions between parallel sides. Thus we can apply Proposition~\ref{pr:delta-zero} and obtain the desired result. 
\end{proof}

\begin{remark}
The assumptions of Proposition~\ref{prop:evenN} are satisfied for the linear contracting reflection law $f(\theta)=\sigma\theta$ where $\sigma\leq1/2$.
\end{remark}
 
\begin{remark}
In Proposition~\ref{prop:evenN}, the contracting reflection law is assumed to be an odd function. This assumption is not necessary, it only simplifies certain estimates in the proof. In fact, we can remove it provided $f(-\delta \frac{2\pi}{d})\geq \delta f(-\frac{2\pi}{d})$ for every $0\leq\delta\leq 1$. 
\end{remark}

\section{Acute triangles}
\label{sec:triangles}


We now deal with the existence of SRB measures for contracting billiards in acute triangles (all the internal angles $\varphi_i$ are less than $\pi/2$). The existence and uniqueness of an SRB measure for the case of the equilateral triangle and $f(\theta)=\sigma\theta$ with $\sigma<1/3$ was first obtained in~\cite{arroyo12} by constructing a proper Markov partition. 

\begin{theorem}\label{thm acute triangles}
For any acute triangle and $f\in\BB$ satisfying
$$
\lambda(f) < \frac2\pi \min_i\left(\frac\pi2-\varphi_i\right),
$$ 
the billiard map $\Phi_f$ has a generalized hyperbolic attractor with countably many ergodic SRB measures.
\end{theorem}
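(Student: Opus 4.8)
The plan is to reduce everything to Theorem~\ref{thm bill p<a}. An acute triangle is convex and has no pair of parallel sides; in particular it has no parallel sides facing each other, $ \PP=\emptyset $, and $ p(S_1^+)=2 $. Hence, by Proposition~\ref{prop gen hyp att} and Corollary~\ref{co:delta-positive}, $ A=\conj D $ is a generalized hyperbolic attractor and $ D $ is uniformly hyperbolic. Since $ f\in\BB $, Theorem~\ref{thm bill p<a} reduces the problem to producing some $ m\geq 1 $ with $ p(S_m^+)<\alpha(\Phi_f^m) $, and the bound on $ \lambda(f) $ will be used to control both of these quantities.

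\emph{The angle estimate.} In a triangle any two sides meet at a single vertex, and if $ \varphi $ is the interior angle there then $ |\pi-\delta(\cdot,\cdot)|=\varphi $ for that pair of sides. Thus, by~\eqref{eq:angle-after} and Lemma~\ref{le:bounded-angle}, every orbit contained in $ \tilde D_f $ satisfies
\[
|\bar\theta_i|\;\le\;\varphi_{v_i}+|\theta_{i-1}|\;\le\;\max_j\varphi_j+\frac{\pi}{2}\lambda(f)
\qquad\text{for all }i,
\]
$ v_i $ being the vertex shared by the sides carrying $ \theta_{i-1} $ and $ \theta_i $. The hypothesis $ \lambda(f)<\frac{2}{\pi}\min_j\bigl(\frac{\pi}{2}-\varphi_j\bigr)=\frac{2}{\pi}\bigl(\frac{\pi}{2}-\max_j\varphi_j\bigr) $ says precisely that the right-hand side is $ <\pi/2 $, so there is $ c>0 $ with $ |\bar\theta_i|\leq\pi/2-c $, hence $ \cos\bar\theta_i\geq\sin c>0 $, along every orbit meeting $ \tilde D_f $: no collision near the attractor is close to tangential. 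Combining $ \cos\theta_0\geq\cos(\pi\lambda(f)/2)>0 $ on $ \tilde D_f $ with the estimate $ \Lambda_{n-1}(s_1,\theta_1)\geq c_0\, r(\epsilon)^{n/3} $ coming from the argument of Proposition~\ref{pr:delta-zero} (applied with a fixed $ 0<\epsilon<\tfrac12\min_j\varphi_j $, available since there are no parallel sides) and with~\eqref{eq:alpha-lambda}, one gets a uniform lower bound $ \alpha(\Phi_f^n)\geq A'\mu^n $ with $ \mu>1 $; in particular $ \alpha(\Phi_f^m)\to+\infty $.

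\emph{Control of $ p(S_m^+) $ and conclusion.} Since trajectories near the attractor stay uniformly away from tangency, the iterated singular sets do not accumulate new branch points there. Indeed, once a billiard trajectory reaches a vertex it cannot be continued, so---reasoning as in Lemma~\ref{lemma disj2}---within the non-tangential region no point lies on two distinct levels of the tower $ S_1^+,\,\Phi_f^{-1}(S_1^+),\,\Phi_f^{-2}(S_1^+),\dots $; with $ p(S_1^+)=2 $ this keeps $ p(S_m^+) $ bounded (equal to $ 2 $ when the triangle has no orthogonal vertex connection, and in any case growing at most polynomially in $ m $). As $ \alpha(\Phi_f^m)\geq A'\mu^m $ grows exponentially, there is $ m $ with $ p(S_m^+)<\alpha(\Phi_f^m) $, and Theorem~\ref{thm bill p<a} then yields countably many ergodic SRB measures on $ A $.

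I expect the delicate step to be the control of $ p(S_m^+) $: one has to check that the continuous extensions $ \hat\Phi_{f,i} $ used in the definition of $ S_m^+ $ create no extra accumulation of singular curves within the region where the specular angles stay away from $ \pm\pi/2 $, and it is exactly here that the acuteness of the triangle and the explicit bound on $ \lambda(f) $ are needed.
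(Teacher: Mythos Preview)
Your overall strategy --- reduce to Theorem~\ref{thm bill p<a} by finding $m$ with $p(S_m^+)<\alpha(\Phi_f^m)$ --- is the right one, and your angle estimate in step~3 is correct. But step~4, the control of $p(S_m^+)$, has a real gap that you yourself flag at the end: the assertion that the preimage tower $S_1^+,\Phi_f^{-1}(S_1^+),\ldots$ has no multiple points is exactly the content of $\Phi_f^k(S_1^+)\cap S_1^+=\emptyset$ for all $k\geq1$ (via Lemma~\ref{lemma disj2}), and you do not prove this. The fallback claim that $p(S_m^+)$ grows ``at most polynomially'' is not justified, and the reference to ``no orthogonal vertex connection'' imports a slap-map notion that does not apply to $\Phi_f$ for $f\neq0$.

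The paper closes this gap with a short geometric observation you are missing. Since $\Phi_f(S_1^+)\subset V$ and the image angles satisfy $|f(\bar\theta)|<\pi\lambda(f)/2$, one has $\Phi_f(S_1^+)\subset V':=V\cap\tilde D_f$. The crucial point is that in an \emph{acute} triangle the multivalued continuous extension at a vertex maps the vertex back to itself: a point $(\tilde s_i,\theta)$, viewed as the endpoint of side $L_{i-1}$, is sent by the extension to $(\tilde s_i, f(\varphi_i-\theta))$ (the limiting trajectory from nearby interior points of $L_{i-1}$ hits $L_i$ at the same vertex), so $\Phi_f(V')\subset V'$. Finally, $S_1^+$ meets $V$ only at angles $|\theta|=\pi/2-\varphi_i$; the hypothesis $\lambda(f)<\frac{2}{\pi}\min_i(\pi/2-\varphi_i)$ says precisely that these angles lie outside $\tilde D_f$, hence $S_1^+\cap V'=\emptyset$. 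Combining the three facts gives $\Phi_f^k(S_1^+)\cap S_1^+=\emptyset$ for every $k\geq1$, so by Proposition~\ref{prop branch number cst} one has $p(S_m^+)=p(S_1^+)=2$ for all $m$, and Theorem~\ref{coroll hyp attr} applies directly with $f_0=f$. Note that the bound on $\lambda(f)$ is used here to separate $V'$ from $S_1^+$, not (as in your argument) to bound $\cos\bar\theta_n$ from below; the latter is a pleasant by-product but not what makes the proof go through.
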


\begin{proof}
Let $\tilde D_f $ be the invariant set 
in Lemma~\ref{le:bounded-angle}, and let $V'=V\cap \tilde D_f$. Hence $\Phi_f(S_1^+)\subset V'$. Since $S_1^+$ intersects $V$ at $ \theta = \pi/2-\varphi_i$, it is not difficult to see that if $\lambda(f)< 2 \min_i\left(\pi/2-\varphi_i\right)/\pi $, then $S_1^+\cap V'=\emptyset$ and $\Phi_f(V')\subset V'$.
Therefore, $\Phi_f^k(S_1^+)\cap S_1^+=\emptyset$ for every $k\geq1$. 
The wanted conclusion now follows from Theorem~\ref{coroll hyp attr}.
\end{proof}


\section{Rectangles}
\label{sec:rectangles}

Throughout this section, the polygon $ P $ is a rectangle. Recall that $ \mathcal{P} $ denotes the set of periodic points of period two of $ \Phi $, and let $ B(\mathcal{P}) $ be the basin of attraction of $ \mathcal{P} $. In the following, we show that although $ \mathcal{P} \neq \emptyset $, the invariant set $ D \setminus B(\mathcal{P}) $ may still contain non-trivial hyperbolic sets. We also give sufficient conditions for $ B(\mathcal{P}) = M \setminus N^{+}_{\infty} $.

We start by showing that $ D \setminus B(\mathcal{P}) $ is always hyperbolic.

\begin{proposition}
	\label{pr:rectangle}
	The set $ D \setminus B(\mathcal{P}) $ is hyperbolic.
\end{proposition}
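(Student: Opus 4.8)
The plan is to deduce hyperbolicity from Proposition~\ref{pr:dominated}(1). Since $B(\mathcal{P})$ is both forward and backward invariant inside $D$, the set $D\setminus B(\mathcal{P})$ is an invariant set, so it carries a dominated splitting and it suffices to check that
\[
\limsup_{n\to\infty}\frac1n\log\alpha_n(x)>0\qquad\text{for every }x\in D\setminus B(\mathcal{P}).
\]
By~\eqref{eq:alpha-lambda}, $\alpha_n(x)=\frac{\cos\theta_0}{\cos\bar\theta_n}\,\Lambda_{n-1}(x_1)$, and by Lemma~\ref{le:bounded-angle} the prefactor is bounded below by $\cos(\pi\lambda/2)>0$; hence it is enough to prove $\limsup_n n^{-1}\log\Lambda_{n-1}(x_1)>0$.

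In a rectangle two consecutively visited sides are either opposite (a \emph{parallel step}) or adjacent (a \emph{perpendicular step}), so by~\eqref{eq:angle-after}, with $\delta\in\{\pi/2,\pi,3\pi/2\}$: at a parallel step $\bar\theta_{i+1}=-\theta_i$, whence $|\theta_{i+1}|\le\lambda|\theta_i|$ and $\rho(\bar\theta_{i+1})=\cos\theta_{i+1}/\cos\theta_i$, which may be arbitrarily close to $1$; at a perpendicular step $|\bar\theta_{i+1}|=\pi/2-|\theta_i|$ (in particular $\theta_i\neq0$), so $\cos\bar\theta_{i+1}=\sin|\theta_i|\le|\theta_i|$ and, using $\cos\big(f(\bar\theta_{i+1})\big)\ge\cos(\pi\lambda/2)$,
\[
\rho(\bar\theta_{i+1})\ \ge\ \max\!\Big(r_0,\ \tfrac2\pi\cos(\tfrac{\pi\lambda}2)\,|\theta_i|^{-1}\Big),\qquad r_0:=r\big(\tfrac\pi2(1-\lambda)\big)>1 ,
\]
because $|\bar\theta_{i+1}|\ge\tfrac\pi2(1-\lambda)$. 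I would next observe that every $x\in D\setminus B(\mathcal{P})$ has infinitely many perpendicular steps along its forward orbit: otherwise the orbit is eventually confined to a fixed pair of parallel sides, $|\theta_n|\to0$ geometrically, the collision points converge, and $d(\Phi^n x,\mathcal{P})\to0$, contradicting $x\notin B(\mathcal{P})$. Let $j_1<j_2<\cdots$ be the times of the perpendicular steps and let $\ell_k\ge0$ be the number of parallel steps strictly between steps $j_{k-1}$ and $j_k$ (with $\ell_1=j_1-1$). Iterating $|\theta_{i+1}|\le\lambda|\theta_i|$ over the parallel run ending at $j_k-1$ and using $|\theta|<\pi\lambda/2$ on $D$ gives $|\theta_{j_k-1}|<\tfrac\pi2\lambda^{\ell_k+1}$, hence
\[
\rho(\bar\theta_{j_k})\ \ge\ \max\!\big(r_0,\ c_1\lambda^{-(\ell_k+1)}\big),\qquad c_1:=\tfrac2\pi\cos(\tfrac{\pi\lambda}2)>0 .
\]

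The crux, and the main obstacle, is a counting estimate showing that $\Lambda$ grows exponentially regardless of how sparsely the perpendicular steps occur (the coarse bound $\rho\ge r_0$ alone is not enough, since the perpendicular steps may become arbitrarily rare). Evaluating at $n-1=j_K$ and using $\rho\ge1$ on the other steps,
\[
\log\Lambda_{j_K}(x_1)\ \ge\ \sum_{k=1}^{K}\log\max\!\big(r_0,\ c_1\lambda^{-(\ell_k+1)}\big),\qquad j_K=\sum_{k=1}^{K}(\ell_k+1).
\]
I would fix a threshold $m_0$, depending only on $\lambda,r_0,c_1$, so large that $c_1\lambda^{-(\ell+1)}\ge\lambda^{-\ell/2}\ge r_0$ whenever $\ell>m_0$, and split the indices into those with $\ell_k\le m_0$ (``short'') and those with $\ell_k>m_0$ (``long''). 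Each short step contributes a factor $\ge r_0>1$, and their number is at least $(m_0+1)^{-1}$ times the total time they occupy; each long step contributes $\ge\lambda^{-\ell_k/2}$, which already outpaces the time $\ell_k+1\le2\ell_k$ it occupies at a fixed exponential rate. Taking the smaller of the two rates yields $\log\Lambda_{j_K}(x_1)\ge c_2\,j_K$ for all large $K$, with $c_2:=\min\!\big(\tfrac{\log r_0}{m_0+1},\tfrac14\log\tfrac1\lambda\big)>0$. Hence $\limsup_n n^{-1}\log\Lambda_{n-1}(x_1)\ge c_2>0$, so $\limsup_n n^{-1}\log\alpha_n(x)>0$, and Proposition~\ref{pr:dominated}(1) gives that $D\setminus B(\mathcal{P})$ is hyperbolic. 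Note that no uniform bound on the length of parallel runs is available on $D\setminus B(\mathcal{P})$, so Property~(A) does not apply and the expansion, though present, is in general only non-uniform: it is spent during long parallel runs and recovered in bulk at the next perpendicular collision.
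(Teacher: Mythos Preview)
Your proof is correct and follows essentially the same approach as the paper: both decompose the forward orbit into runs of parallel collisions terminated by a perpendicular collision, split these runs at a threshold (your $m_0$, the paper's $m_\lambda$), use $r_0=b_\lambda=r(\tfrac{\pi}{2}(1-\lambda))$ for short runs and $\lambda^{-\text{(run length)}/2}$ for long runs, and pass to the $\limsup$ along the subsequence of perpendicular-step times to invoke Proposition~\ref{pr:dominated}(1). One harmless slip: the factor $\tfrac{2}{\pi}$ should not appear in the displayed bound $\rho(\bar\theta_{i+1})\ge \tfrac{2}{\pi}\cos(\tfrac{\pi\lambda}{2})|\theta_i|^{-1}$ (the correct bound is $\cos(\tfrac{\pi\lambda}{2})|\theta_i|^{-1}$); it enters only after substituting $|\theta_{j_k-1}|<\tfrac{\pi}{2}\lambda^{\ell_k+1}$, which is exactly how you obtain $c_1$ in the next display.
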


\begin{proof}
A sequence of consecutive collisions $ \{(s_{i},\theta_{i})\}^{m}_{i=0} $ with $ m \ge 1 $ is called a \emph{block} if its first $ m $ collisions occur at two parallel sides of $ P $, whereas its last collision occurs at a side adjacent to the other two; when $ m=1 $, a block consists of two consecutive collisions at adjacent sides of the rectangle. 
It is easy to see that $ |\theta_{m-1}| \le \lambda^{m-1} |\theta_{0}| $, and using \eqref{eq:angle-after}, that 
	\begin{equation}
	\label{eq:collision-angles}
|\bar{\theta}_{m} - \theta_{m-1} | = \frac{\pi}{2}.
	\end{equation}

Since $ \rho \ge 1 $, it follows that $ \Lambda_{m}(s_{1},\theta_{1}) \ge \rho(\bar{\theta}_{m}) $. Next, we want to estimate $ \rho(\bar{\theta}_{m}) $ from below. Let 
	\[
	m_{\lambda} = \frac{2}{\log \lambda} \log \left( \frac{2}{\pi} \cos \frac{\pi \lambda}{2}\right) \qquad \text{and} \qquad b_{\lambda} = r\left(\frac{\pi}{2} (1-\lambda)\right).
	\]
We consider separately the cases: $ m \le m_{\lambda} $ and $ m > m_{\lambda} $. First, suppose that $ m \le m_{\lambda} $. By Lemma \ref{le:bounded-angle}, we know that $ |\theta_{m-1}| < \pi \lambda/2 $. Using \eqref{eq:collision-angles}, we obtain $ |\bar{\theta}_{m}| > \pi (1-\lambda)/2 $ so that
		\begin{equation}
			\label{eq:N-greater-m}
		\Lambda_{m}(s_{1},\theta_{1}) \ge b_{\lambda} \ge b_{\lambda}^{\frac{m}{m_{\lambda}}}.
		\end{equation}
Now, suppose that $ m > m_{\lambda} $. Using the bounds $ |\theta_{0}| < \lambda \pi/2 $ and $ |\theta_{m}| < \lambda \pi/2 $ provided by Lemma \ref{le:bounded-angle}, we get
		\begin{equation}
		\label{eq:cos}
		 \cos (|\bar{\theta}_{m}|) \le 
		\sin (\lambda^{m-1} |\theta_{0}|) \\
		< \sin \frac{\pi \lambda^{m}}{2} \le \frac{\pi \lambda^{m}}{2},
		\end{equation}
		and
		\begin{equation}
		\label{eq:cos-bar}
		\cos (|\theta_{m}|) \ge \cos \frac{\pi \lambda}{2}.
		\end{equation}
		Since $ m > m_{\lambda} $, it follows from the definition of $ m_{\lambda} $ that 
		\[
		\pi \lambda^{m/2} \le 2 \cos \frac{\pi \lambda}{2}.
		\] 
		This fact, together with \eqref{eq:cos-bar} and \eqref{eq:cos}, implies that 
		\begin{equation}
			\begin{split}
			\label{eq:N-smaller-m}
		\Lambda_{m}(s_{1},\theta_{1}) & \ge \rho(\bar{\theta}_{m}) > \frac{2}{\pi \lambda^{m}} \cos \frac{\pi \lambda}{2} \\ & \ge \lambda^{-\frac{m}{2}} \frac{2}{\pi \lambda^{\frac{m}{2}}} \cos \frac{\pi \lambda}{2} \ge \lambda^{-\frac{m}{2}}.
			\end{split}
		\end{equation}
Let $ \mu = \min \{b_{\lambda}^{1/m_{\lambda}},\lambda^{-1/2}\} > 1 $. Combining  \eqref{eq:N-greater-m} and \eqref{eq:N-smaller-m} together, we conclude that
		\begin{equation}
			\label{eq:lambda-m}
		\Lambda_{m}(s_{1},\theta_{1}) \ge \mu^{m}.
		\end{equation}

Suppose that $ (s_{0},\theta_{0}) \in D \setminus B(\mathcal{P}) $. Then for $ n > 0 $ sufficiently large, the sequence of collisions $ \{(s_{i},\theta_{i})\}^{n}_{i = 0} $ can be decomposed into $ k(n) \ge 1 $ consecutive blocks followed by $ j(n) \ge 0 $ consecutive collisions with two parallel sides of the rectangle. Let $ L(s_{0},\theta_{0};n) $ be positive integer such that $ x_{L(s_{0},\theta_{0};n)} $ is the last collision of the $ k(n) $th block. Hence 
\[
L(s_{0},\theta_{0};L(s_{0},\theta_{0};n)) = L(s_{0},\theta_{0};n) \le n.
\]
Since $ (s_{0},\theta_{0}) \notin B(\mathcal{P}) $, we have $ L(s_{0},\theta_{0};n) \to +\infty $ as $ n \to +\infty $, and so 
\[
\limsup_{n \to +\infty} \frac{L(s_{0},\theta_{0};n)}{n} = 1.
\]
Let $ A = \cos (\pi \lambda/2) $. From \eqref{eq:lambda-m}, it follows that
\[ 
\alpha_{n}(s_{0},\theta_{0}) \ge A \mu^{L(s_{0},\theta_{0};n)}
\]
for $ n $ sufficiently large. But $ \|D\Phi^{n}(s_{0},\theta_{0}) \| \ge \alpha_{n}(s_{0},\theta_{0}) $ so that 
\[
		\limsup_{n\to\infty} \frac{1}{n}\log\left\|D\Phi^n(s,\theta)\right\| \ge \log \mu > 0.
\]		
This and Proposition \ref{pr:dominated} imply that $ D \setminus B(\mathcal{P}) $ is hyperbolic. 
\end{proof}

\begin{remark}
	\label{re:uniform-vs-nonuniform}
	Suppose that $ j(n) $ in the proof of Proposition~\ref{pr:rectangle} is bounded from above by some constant $ J $ for every $ n $ and every $ (s_{0},\theta_{0}) \in D \setminus B(\mathcal{P}) $. Then $ D \setminus B(\mathcal{P}) $ is uniformly hyperbolic by Proposition~\ref{pr:delta-zero}. We can also obtain the same conclusion, in a more direct way, by arguing as follows. If such a $ J $ exists, then $ L(s_{0},\theta_{0}) \ge n-J $. The notation here is as in the proof of Proposition~\ref{pr:delta-zero}. We then obtain,
	\[ 
	\alpha_{n}(s_{0},\theta_{0}) \ge A \mu^{L(s_{0},\theta_{0};n)} \ge \frac{A}{\mu^{j}} \mu^{n},
	\]
	which means that $ D \setminus B(\mathcal{P}) $ is uniformly hyperbolic. 
\end{remark}

\begin{remark}
Since the estimate~\eqref{eq:lambda-m} relies only on the assumption that a rectangle has three consecutive sides with internal angles equal to $ \pi/2 $, one can easily extend Proposition~\ref{pr:rectangle} to polygons that are finite unions of rectangles with sides parallel to two orthogonal axes. 
\end{remark}

\begin{remark}
	\label{re:decreasing}
	It is not difficult to see that $ B(\mathcal{P}) = M \setminus N^{+}_{\infty} $ for every $ f \in \mathcal{R}^{k}_{1} $ with $ f'<0 $. This kind of reflection law is the contracting version of $ f(\theta) = -\theta $, the reflection law of Andreev billiards, which are models for the electronic conduction in a metal connected to a super-conductor \cite{Richter99}.
\end{remark}

We now give some sufficient conditions for the existence of a forward invariant set of positive $ \nu $-measure containing a uniformly hyperbolic set.
In view of Remark~\ref{re:decreasing}, we may assume that $ f'>0 $. Moreover, by rescaling $ P $, we may assume without loss of generality that the sides of $ P $ have length $ 1 $ and $ 0 < h \le 1 $. Fix a pair of parallel sides of $ P $. In the following, the expression `long sides' (resp. `long side') of $ P $ has to be understood as the chosen pair (resp. a side from the chosen pair) of parallel sides of $ P $ whenever $ h=1 $.

Define
\[ 
f_{1}(\theta) = - f(\theta) \quad \text{for } \theta \in [\pi/2,\pi/2],
\]
and
\[
f_{2}(\theta) = f\left(\sgn(\theta)\frac{\pi}{2}-\theta \right) \quad \text{for } \theta \in [-\pi/2,0) \cup (0,\pi/2].
\]
To explain the geometrical meaning of $ f_{1} $ and $ f_{2} $, we observe that if $ (s_{0},\theta_{0}) $ and $ (s_{1},\theta_{1}) $ are two consecutive collisions, then $ \theta_{1} = f_{1}(\theta_{0}) $ when the two collisions occur at parallel sides of $ P $, and $ \theta_{1}= f_{2}(\theta_{0}) $ when the two collisions occur at adjacent sides of $ P $. 

From the properties of $ f $, one can easily deduce that i) $ f_{2} $ is strictly decreasing, ii) the restriction of $ f_{2} $ to each interval $ (-\pi/2,0) $ and $ (0,\pi/2) $ is a strict contraction with Lipschitz constant less than $ \lambda(f) $, and iii) $ f_{2} $ has two fixed points $ \theta^{-} < 0 < \theta^{+} $.

\begin{defn}
Let $ T $ be the subset of $ M \setminus N^{+}_{\infty} $ consisting of elements whose positive semi-trajectory is trapped between two parallel sides of $ P $. 
\end{defn}

It is easy to see that $ T $ is a forward invariant set containing $ \PP $.

\begin{defn}
Let $ \hat{M} =  M \setminus (T \cup N^{+}_{\infty}) $. 
\end{defn} 

The set $ \hat{M} $ consists of elements with infinite positive semi-trajectory having at least a pair of consecutive collisions at adjacent sides of $ P $. It can be shown that $ \hat{M} $ is open up to a set of zero $ \nu $-measure. 

\begin{defn}
	The sequence $ \Phi(x_{0}),\ldots,\Phi^{n}(x_{0}) $ of $ n \ge 2 $ collisions with $ x_{0} \in \hat{M} $ is called \emph{adjacent} if $ \Phi^{i}(x_{0}) $ and $ \Phi^{i+1}(x_{0}) $ hit adjacent sides of $ P $ for every $ i = 1,\ldots, n-1 $. 
\end{defn}	

\begin{defn}	
	The sequence $ \Phi(x_{0}),\ldots,\Phi^{n-1}(x_{0}) $ of $ n \ge 1 $ collisions hitting two parallel sides of $ P $ with $ x_{0} \in \hat{M} $ is called \emph{complete} if $ x_{0} $ and $ x_{1} $ are adjacent, and $ x_{n-1} $ and $ x_{n} $ are adjacent. The number $ n $ is called the \emph{length} of the sequence.
\end{defn}

Let $ \theta^{-}_{*} < 0 <\theta^{+}_{*} $ be the two solutions of 
\[
\mathcal{F}(\theta):=\sum^{\infty}_{n=0} \tan \left(\left|f^{n}_{1}(\theta)\right|\right) = \frac{1}{h}, \quad \theta \in (-\pi/2,\pi/2).
\]
If the endpoints of a long side of $ P $ have arclength parameters $ s=0 $ and $ s=1 $, then the billiard particle leaving the endpoint $ s=0 $ (resp. $ s=1 $) with an angle $ \theta^{+}_{*} $ (resp. $ \theta^{-}_{*} $) bounces between the long sides of $ P $ forever, converging to the short side of $ P $ that has an endpoint at $ s=1 $ (resp. $ s=0 $). 

\begin{lemma}
	\label{le:trapped}
If $ (s_{0},\theta_{0}) \in M \setminus N^{+}_{\infty} $ is the first element of a complete sequence of collisions between the long sides of $ P $ and 
\begin{equation}
	\label{eq:cond-trapped}
f_{1}(\theta^{+}_{*}) \le \theta_{0} \le f_{1}(\theta^{-}_{*}),
\end{equation} 
then $ (s_{0},\theta_{0}) \in T $. 
\end{lemma}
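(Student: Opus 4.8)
The plan is to follow the forward billiard orbit of $(s_{0},\theta_{0})$ and show that the $x$-coordinates of its successive collision points on the long sides stay in $(0,1)$, so that it never reaches a short side and hence stays forever between the two long sides, i.e. belongs to $T$. After rescaling, place the long sides on $\{y=0\}\times(0,1)$ and $\{y=h\}\times(0,1)$, and the short sides on $\{x=0\}$ and $\{x=1\}$. As long as the orbit stays between the long sides, its collision angles there form the sequence $\theta_{0},\ \theta_{1}=f_{1}(\theta_{0}),\ \theta_{2}=f_{1}^{2}(\theta_{0}),\dots$, with $|\theta_{k}|=|f_{1}^{k}(\theta_{0})|\le\lambda(f)^{k}|\theta_{0}|$ because $|f_{1}(\theta)|=|f(\theta)|\le\lambda(f)|\theta|$. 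Since a reflection off a horizontal side with $f'>0$ keeps the sign of the horizontal velocity fixed, that sign is constant along the whole run; by the symmetry $s\mapsto 1-s$ I may assume the orbit drifts to the right, so that $(s_{0},\theta_{0})$, being the first long-side collision of the run, is preceded by a collision on the left short side $\{x=0\}$. Writing $u_{k}\in(0,1)$ for the $x$-coordinate of the $k$-th long-side collision, each segment joining the two long sides crosses a vertical distance $h$ at angle $|\theta_{k}|$ from the vertical normal, so $u_{k+1}=u_{k}+h\tan|\theta_{k}|$; hence the $u_{k}$ increase monotonically to $u_{\infty}=u_{0}+h\sum_{k\ge0}\tan|\theta_{k}|=u_{0}+h\,\mathcal{F}(\theta_{0})$, the series converging by the decay of $|\theta_{k}|$. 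The orbit then remains between the long sides forever provided $u_{\infty}\le 1$, so it suffices to prove $u_{0}+h\,\mathcal{F}(\theta_{0})\le 1$.

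For this, set $\hat\theta=f_{1}^{-1}(\theta_{0})$. Applying the decreasing map $f_{1}^{-1}$ to $f_{1}(\theta^{+}_{*})\le\theta_{0}\le f_{1}(\theta^{-}_{*})$ gives $\hat\theta\in[\theta^{-}_{*},\theta^{+}_{*}]$; since $\mathcal{F}$ vanishes at $0$, increases in $|\theta|$ on each side of $0$, and equals $1/h$ at $\theta^{\pm}_{*}$, this yields $\mathcal{F}(\hat\theta)\le 1/h$. Peeling the first term off the series defining $\mathcal{F}$ and using $f_{1}(\hat\theta)=\theta_{0}$ gives $\mathcal{F}(\hat\theta)=\tan|\hat\theta|+\mathcal{F}(\theta_{0})$. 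Finally I would bound the entry offset $u_{0}$: the segment reaching the first long-side collision issues from a point of the left short side, crosses a vertical extent at most $h$, and meets the long side at an incidence angle $\bar\theta$ which, by the reflection law and the orientation of that side, satisfies $\theta_{0}=f(\bar\theta)$ and $|\bar\theta|=|\hat\theta|$; an elementary triangle estimate then gives $u_{0}\le h\tan|\bar\theta|=h\tan|\hat\theta|$. Combining these bounds, $u_{0}+h\,\mathcal{F}(\theta_{0})\le h\tan|\hat\theta|+h\,\mathcal{F}(\theta_{0})=h\,\mathcal{F}(\hat\theta)\le 1$, which is precisely the required inequality. (Conceptually, the orbit is dominated by the extremal trajectory issued from the vertex $s=0$ with angle $\hat\theta\in[\theta^{-}_{*},\theta^{+}_{*}]$, whose total horizontal excursion is $h\,\mathcal{F}(\hat\theta)\le 1$ by the defining property of $\theta^{\pm}_{*}$, so it can only accumulate on the opposite short side without meeting it.) Hence $(s_{0},\theta_{0})\in T$.

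The step I expect to be the real obstacle is making the geometric estimate on the entry offset match the hypothesis exactly: one must track the sign conventions in the reflection law, the two (opposite) orientations of the long sides—so that the one-step map between them is $\theta\mapsto f(-\theta)$ and the incidence angle $\bar\theta$ at the first long-side collision indeed has $|\bar\theta|=|f_{1}^{-1}(\theta_{0})|$—and the drift direction, carefully enough to be sure that the per-bounce horizontal increment equals $h\tan|\theta_{k}|$ with one fixed sign along the whole run. Everything after that—the translation of the hypothesis through $f_{1}^{-1}$, the peeling of the series, and the telescoping bound on $u_{\infty}$—is routine work with the monotone one-dimensional maps $f$ and $f_{1}$ and with a convergent series.
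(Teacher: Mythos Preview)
Your argument is correct and is exactly the geometric content that the paper's two-line proof leaves implicit: the paper merely observes that the hypothesis together with the monotonicity of $f$ gives $\mathcal{F}(\theta_0)<1/h$ and asserts that this ``implies immediately'' the conclusion, whereas you actually track the horizontal drift $u_k$ and bound the entry offset $u_0$ by comparing with the extremal trajectory issued from the vertex. The step you single out as the obstacle is indeed the only delicate one, and your identification of the parallel-side transition as $\theta\mapsto f(-\theta)$ (so that $\hat\theta=-f^{-1}(\theta_0)=-\bar\theta$ and hence $|\bar\theta|=|f_1^{-1}(\theta_0)|$) is precisely what makes the bound $u_0\le h\tan|\hat\theta|$ go through; note that the paper's formula $f_1(\theta)=-f(\theta)$ agrees with this only for odd $f$, so your parenthetical remark is in effect correcting a minor sign slip.
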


\begin{proof}
	From the monotonicity of $ f $ and Condition~\eqref{eq:cond-trapped}, it follows that $ \mathcal{F}(\theta_{0})<1/h $. This implies immediately the wanted conclusion.
\end{proof}

\begin{lemma}
	\label{le:escape}
	If $ (s_{0},\theta_{0}) \in M \setminus N^{+}_{\infty} $ is the first element of a complete sequence of collisions between parallel sides of $ P $ and 
	\begin{equation}
		\label{eq:cond-escape}
	\theta_{0} < f_{1}(\theta^{+}_{*}) \quad \text{or }\quad f_{1}(\theta^{-}_{*}) < \theta_{0},
	\end{equation}
	then $ (s_{m},\theta_{m}) $ and $ (s_{m+1},\theta_{m+1}) $ are adjacent for some $ m \ge 0 $.
\end{lemma}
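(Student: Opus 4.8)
The plan is to prove the statement by contraposition; it is essentially the complement of Lemma~\ref{le:trapped}. Assume that $(s_m,\theta_m)$ and $(s_{m+1},\theta_{m+1})$ are never adjacent for $m\ge0$. In a rectangle two consecutive collisions always hit distinct sides, two non-adjacent sides are parallel, and the collision right after a run between a given pair of parallel sides hits a side adjacent to that pair. Hence the whole forward orbit of $(s_0,\theta_0)$ stays on the pair of parallel sides that it is starting a run between — call these the long sides — so $(s_0,\theta_0)\in T$, and I will contradict~\eqref{eq:cond-escape}.

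Next I would translate trapping into a tangential--drift bound. Placing the long sides horizontally, of length $1$, at vertical distance $h$, the collisions of the run alternate between the two long sides; by~\eqref{eq:angle-after} the angle evolves by $f_1$ and alternates in sign, so the tangential velocity keeps a fixed sign and every flight is a full crossing of the strip with tangential displacement $h\tan|\theta_j|$, $\theta_j=f_1^j(\theta_0)$, all in one direction. Writing $\xi_0\in(0,1)$ for the tangential coordinate of $(s_0,\theta_0)$ measured from the corner the run leaves, confinement of the orbit to a side of length $1$ gives
\[
\xi_0+h\,\mathcal F(\theta_0)=\xi_0+h\sum_{j\ge0}\tan|f_1^j(\theta_0)|\le1 .
\]
The flight entering $(s_0,\theta_0)$ issues from an adjacent short side, so its tangential displacement equals (its vertical displacement)$\cdot\tan|f_1^{-1}(\theta_0)|$, with vertical displacement in $(0,h)$; analysing this flight, together with the preceding complete run that brought the particle to that short side, yields a lower bound on $\xi_0$ which, combined with the displayed inequality and the identity $\tan|f_1^{-1}(\theta_0)|+\mathcal F(\theta_0)=\mathcal F\big(f_1^{-1}(\theta_0)\big)$, forces $\mathcal F\big(f_1^{-1}(\theta_0)\big)\le1/h$.

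Finally I would invoke the monotonicity of $\mathcal F$. As $f'>0$, the map $f_1=-f$ is a decreasing contraction fixing $0$, so each term $\theta\mapsto\tan|f_1^n(\theta)|$ is increasing on $(0,\pi/2)$ and decreasing on $(-\pi/2,0)$; hence $\mathcal F$ is strictly decreasing on $(-\pi/2,0)$ and strictly increasing on $(0,\pi/2)$, with $\mathcal F(0)=0$ and $\mathcal F(\theta)\to+\infty$ as $\theta\to\pm\pi/2$, so that $\{\mathcal F\le1/h\}=[\theta^-_*,\theta^+_*]$. Thus $\mathcal F\big(f_1^{-1}(\theta_0)\big)\le1/h$ says $f_1^{-1}(\theta_0)\in[\theta^-_*,\theta^+_*]$, i.e. (since $f_1$ is decreasing) $\theta_0\in[f_1(\theta^+_*),f_1(\theta^-_*)]$, contradicting~\eqref{eq:cond-escape}.

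The coordinate bookkeeping and the monotonicity of $\mathcal F$ are routine. The main obstacle is the lower bound on $\xi_0$: a priori the first collision of the run may sit arbitrarily close to a corner (the incoming flight can have vertical displacement close to $0$), and only the maximality (``completeness'') of the current run together with that of the preceding one — equivalently, the fact that the time--reversed dynamics, governed by the expanding map $f^{-1}$, spreads configurations apart — rules this out and lets one replace the crude conclusion $\theta_0\in[\theta^-_*,\theta^+_*]$ by the sharp $\theta_0\in[f_1(\theta^+_*),f_1(\theta^-_*)]$.
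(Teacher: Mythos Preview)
The paper's proof is a single line: from the monotonicity of $f$ and condition~\eqref{eq:cond-escape} it asserts $\mathcal{F}(\theta_0) > 1/h \ge h$, and since the total tangential drift of an infinite run between the long (resp.\ short) sides equals $h\,\mathcal{F}(\theta_0)$ (resp.\ $\mathcal{F}(\theta_0)$), this already exceeds the length of the side in question, forcing escape \emph{regardless} of the starting position on the side. No lower bound on $\xi_0$ is ever invoked.

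Your route is much more elaborate and rests exactly on the step you yourself flag as the main obstacle, the lower bound on $\xi_0$; that step does not go through. Two concrete problems. First, the incidence angle at $(s_0,\theta_0)$ is $\bar\theta_0 = f^{-1}(\theta_0)$, not $f_1^{-1}(\theta_0) = f^{-1}(-\theta_0)$; these differ when $f$ is not odd, so your identity $\tan|f_1^{-1}(\theta_0)| + \mathcal{F}(\theta_0) = \mathcal{F}\big(f_1^{-1}(\theta_0)\big)$ does not hook up to the geometry of the incoming flight. Second, and more seriously, the hypothesis of the lemma places no constraint on the backward orbit beyond the single fact that the immediately preceding collision lies on an adjacent side; there is no ``preceding complete run'' to appeal to, and invoking the expanding backward dynamics under $f^{-1}$ does not help because nothing pins down where on the short side that preceding collision sits. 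Its height $y_{-1}\in(0,h)$ can be arbitrarily close to $0$, so $\xi_0 = y_{-1}\tan|\bar\theta_0|$ can be arbitrarily small. With only $\xi_0>0$ the trapping inequality gives just $\mathcal{F}(\theta_0) < 1/h$, i.e.\ $\theta_0\in(\theta^-_*,\theta^+_*)$, which is strictly weaker than the interval $[f_1(\theta^+_*),f_1(\theta^-_*)]$ you are aiming for. The paper sidesteps all of this by arguing directly with $\mathcal{F}(\theta_0)$ rather than through the position variable.
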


\begin{proof}
	Using the monotonicity of $ f $ and Condition~\eqref{eq:cond-escape}, we obtain $ \mathcal{F}(\theta_{0}) > 1/h > h $, which proves the proposition.
\end{proof}

Let
\[
\tilde{\theta} : = \min \left\{f_{2}\left(f\left(\frac{\pi}{2}\right)\right),-f_{2}\left(f\left(-\frac{\pi}{2}\right)\right) \right\}.
\]

\begin{lemma}
	\label{le:adjacent}
	Let $ (s_{0},\theta_{0}) \in \tilde{D} $. If $ (s_{0},\theta_{0}) $ and $ (s_{1},\theta_{1}) $ are adjacent collisions, then $ |\theta_{1}| \ge \tilde{\theta} $.
\end{lemma}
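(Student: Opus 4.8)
The plan is to reduce everything to the single reflection formula~\eqref{eq:angle-after}. First I would record the geometry of two adjacent collisions in a rectangle. Since adjacent sides of $P$ meet at a right angle, the angle $\delta(L_0,L_1)$ between the side $L_0$ containing $(s_0,\theta_0)$ and the side $L_1$ containing $(s_1,\theta_1)$ equals $\pi/2$ when the particle turns toward the ``next'' adjacent side and $3\pi/2$ when it turns toward the ``previous'' one, and which of the two occurs is dictated by the sign of $\theta_0$. Inserting this into~\eqref{eq:angle-after} gives $\bar\theta_1=\sgn(\theta_0)\,\tfrac{\pi}{2}-\theta_0$ — equivalently the relation $\theta_1=f_2(\theta_0)$ recalled before the lemma — and hence $|\bar\theta_1|=\tfrac{\pi}{2}-|\theta_0|$, with $\bar\theta_1$ of the same sign as $\theta_0$.

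The key point is then a bound on $|\theta_0|$ sharper than the one given by $\tilde D$ alone: as a collision, $(s_0,\theta_0)$ is the image under $\Phi_f$ of the preceding collision, so $\theta_0=f(\bar\theta_0)$ for some $\bar\theta_0\in(-\pi/2,\pi/2)$. Since $f$ is an increasing contraction with $f(0)=0$, the angle $\bar\theta_0$ has the same sign as $\theta_0$, whence $0<\theta_0<f(\pi/2)$ if $\theta_0>0$ and $f(-\pi/2)<\theta_0<0$ if $\theta_0<0$. (This is consistent with $(s_0,\theta_0)\in\tilde D$ because $|f(\pm\pi/2)|\le\tfrac{\pi}{2}\lambda(f)$.)

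The estimate now follows from the monotonicity of $f$. If $\theta_0>0$, then $\bar\theta_1=\tfrac{\pi}{2}-\theta_0$ lies in $\bigl(\tfrac{\pi}{2}-f(\tfrac{\pi}{2}),\tfrac{\pi}{2}\bigr)\subset(0,\pi/2)$, so
\[
\theta_1=f(\bar\theta_1)>f\!\left(\tfrac{\pi}{2}-f(\tfrac{\pi}{2})\right)=f_2\!\left(f(\tfrac{\pi}{2})\right)\ge\tilde\theta ,
\]
and $\theta_1>0$ gives $|\theta_1|\ge\tilde\theta$. If $\theta_0<0$, then $\bar\theta_1=-\tfrac{\pi}{2}-\theta_0$ lies in $\bigl(-\tfrac{\pi}{2},\,-\tfrac{\pi}{2}-f(-\tfrac{\pi}{2})\bigr)\subset(-\pi/2,0)$, so $\theta_1=f(\bar\theta_1)<f\bigl(-\tfrac{\pi}{2}-f(-\tfrac{\pi}{2})\bigr)=f_2\bigl(f(-\tfrac{\pi}{2})\bigr)<0$, whence $|\theta_1|>-f_2\bigl(f(-\tfrac{\pi}{2})\bigr)\ge\tilde\theta$. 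Combining the two cases gives $|\theta_1|\ge\tilde\theta$.

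The calculations are routine; the two steps that need care are the sign bookkeeping in the first paragraph — tying the turning direction, hence the value of $\delta(L_0,L_1)$, to $\sgn(\theta_0)$ so that $\bar\theta_1$ and $\theta_0$ share a sign — and, in the second paragraph, the use of the sharp bound $\theta_0\in(f(-\pi/2),f(\pi/2))$ rather than the weaker $|\theta_0|<\tfrac{\pi}{2}\lambda(f)$, since it is precisely this that makes $\tilde\theta$ the correct lower bound.
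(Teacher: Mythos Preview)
Your proof is correct and follows essentially the same route as the paper's: write $\theta_1=f_2(\theta_0)$ with $\theta_0=f(\bar\theta_0)$ for some $\bar\theta_0\in(-\pi/2,\pi/2)$, and then use monotonicity to see that $|f_2\circ f|$ attains its infimum at the endpoints, giving $\tilde\theta$. The paper compresses your two sign cases into the single observation that $(f_2\circ f)'<0$ (since $f'>0$ and $f_2$ is decreasing), but the content is identical.
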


\begin{proof}
	We have $ |\theta_{1}| = |f_{2}(f(\bar{\theta}_{0}))| $, because $ \theta_{1} = f_{2}(\theta_{0}) $ and $ \theta_{0} = f(\bar{\theta}_{0}) $ with $ \bar{\theta}_{0} \in (-\pi/2,\pi/2)  $. But $ (f_{2} \circ f)'<0 $, since $ f'>0 $. Thus, the minimum of $ |f_{2} \circ f| $ is attained at $ -\pi/2 $ or $ \pi/2 $, and therefore it is equal to $ \tilde{\theta} $. 
\end{proof}

\begin{proposition}
\label{pr:uh-rect}
Suppose that $ f'>0 $ and
\begin{equation}
	\label{eq:non-trapped}
\tilde{\theta} > \max\{-\theta^{-}_{*},\theta^{+}_{*}\}.
\end{equation}
Then there exists $ m>0 $ such that for every $ (s_{0},\theta_{0}) \in \hat{M} $, the positive semi-orbit $ \{(s_{n},\theta_{n})\}_{n \ge 0} $ contains infinitely many complete sequences, and each complete sequence contained in $ \{(s_{n},\theta_{n})\}_{n \ge 0} $ has length less or equal than $ m $.
\end{proposition}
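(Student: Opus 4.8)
The plan is to exploit the combinatorial structure of positive semi-orbits in a rectangle. Call a step $i$ of the orbit a \emph{transition} when $x_i$ and $x_{i+1}$ hit adjacent (perpendicular) sides of $P$, and a \emph{bounce} otherwise, in which case $x_i$ and $x_{i+1}$ hit a common pair of parallel sides. Since two consecutive bounces necessarily occur between the same pair of parallel sides, the semi-orbit decomposes into maximal runs of bounces separated by transitions; a run bordered by a transition on each side is exactly a complete sequence, its length being the number of its collisions. There are then two things to prove: (i) the orbit of any $(s_0,\theta_0)\in\hat M$ contains infinitely many complete sequences, and (ii) there is a bound $m$, uniform over $\hat M$, on their lengths.

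For (i), recall that $T$ is forward invariant and that, by definition of $\hat M$, one has $(s_0,\theta_0)\notin T$. If the orbit had only finitely many transitions it would eventually consist of bounces only, hence be eventually trapped between a fixed pair of parallel sides; then some forward iterate of $(s_0,\theta_0)$, and therefore $(s_0,\theta_0)$ itself (as $T$ is forward invariant and being eventually trapped is a tail property), would lie in $T$ --- a contradiction. So the orbit has infinitely many transitions, and between each consecutive pair of them sits exactly one complete sequence; hence there are infinitely many.

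For (ii), fix a complete sequence and let $x_j$ be its first collision, so that step $j-1$ is a transition. Because $x_n\in\tilde D$ for every $n\ge1$ (indeed $|\theta_n|=|f(\bar\theta_n)|<\lambda\pi/2$), Lemma~\ref{le:adjacent} applies whenever $j\ge2$ and yields $|\theta_j|\ge\tilde\theta$; the at most one complete sequence starting at $x_1$ is treated by hand, using $(s_0,\theta_0)\notin T$ to exclude trapping and the estimate below to bound its length. Since $f$ is a strict contraction fixing $0$, $|f_1(\theta^\pm_*)|=f(|\theta^\pm_*|)<|\theta^\pm_*|\le\max\{-\theta^-_*,\theta^+_*\}<\tilde\theta$ by hypothesis, so $|\theta_j|\ge\tilde\theta$ forces $\theta_j$ strictly outside $[f_1(\theta^+_*),f_1(\theta^-_*)]$ (and outside the analogous, even smaller, interval attached to the pair of short sides, because $h\le1/h$). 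By Lemma~\ref{le:escape} --- and its verbatim analogue for the short sides, proved the same way from $\mathcal F(\theta_j)>h$ --- the run is not trapped, so it terminates. To bound the number of bounces, note that along the run the incidence angles are $f_1^{\,k}(\theta_j)$, $k\ge0$, and the horizontal displacement accumulated in $k$ bounces between parallel sides of width $d\in\{h,1\}$ equals $d\sum_{i=0}^{k-1}\tan(|f_1^{\,i}(\theta_j)|)$. The full series $d\,\mathcal F(\theta_j)$ overshoots the length $L\in\{1,h\}$ of the side along which the particle travels by a uniform amount $\varepsilon_0>0$: since $\mathcal F$ is increasing on $(0,\pi/2)$ and decreasing on $(-\pi/2,0)$, one has $\mathcal F(\theta_j)\ge\min\{\mathcal F(\tilde\theta),\mathcal F(-\tilde\theta)\}>\mathcal F(\theta^+_*)=\mathcal F(\theta^-_*)=1/h\ge h$, whence $d\,\mathcal F(\theta_j)>L$ with a gap $\varepsilon_0$ depending only on $P$ and $f$. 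On the other hand $|f_1^{\,i}(\theta)|\le\lambda^{i}\pi/2$, so $d\sum_{i\ge N}\tan(|f_1^{\,i}(\theta_j)|)\le\sum_{i\ge N}\tan(\lambda^{i}\pi/2)\to0$ uniformly in $N$. Choosing $m$ with $\sum_{i\ge m}\tan(\lambda^{i}\pi/2)<\varepsilon_0$, the displacement accumulated in $m$ bounces already exceeds $L$, which is impossible while the run goes on; hence every complete sequence has at most $m$ collisions, uniformly over $\hat M$.

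The crux is the uniform length bound in (ii): it requires both a uniform lower bound $|\theta_j|\ge\tilde\theta$ on the angle at which any run is entered --- provided by Lemma~\ref{le:adjacent} --- and a uniform positive gap between $d\,\mathcal F(\theta_j)$ and the side length $L$, which is precisely what the hypothesis $\tilde\theta>\max\{-\theta^-_*,\theta^+_*\}$ delivers through the monotonicity of $\mathcal F$ and $\mathcal F(\theta^\pm_*)=1/h$; the exponential contraction $|f_1^{\,i}(\theta)|\le\lambda^{i}\pi/2$ then converts that gap into a finite, orbit-independent bound on the number of bounces. The only mildly delicate point is the first run when it begins at $x_1$, where Lemma~\ref{le:adjacent} is not directly available; there one argues straight from $(s_0,\theta_0)\notin T$.
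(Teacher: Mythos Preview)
Your argument for (i) contains a logical slip. You claim that if some forward iterate $x_N$ lies in $T$, then $(s_0,\theta_0)$ itself lies in $T$, citing forward invariance of $T$. But forward invariance says $\Phi(T)\subset T$, not $\Phi^{-1}(T)\subset T$: if step $0$ is a transition and $x_1,x_2,\ldots$ bounce forever between a single pair of parallel sides, then $x_1\in T$ while $x_0\notin T$, so $x_0\in\hat M$ with only one transition --- precisely what you must exclude. The phrase ``being eventually trapped is a tail property'' does not help, since $T$ consists of \emph{immediately} trapped points, not eventually trapped ones. The fix is already present in your (ii): the escape argument there shows that any run entered with $|\theta_j|\ge\tilde\theta$ terminates in bounded time, so after each transition at step $\ge 1$ another one follows within $m$ steps; combined with the existence of at least one transition this yields infinitely many. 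Part (i) should therefore be deduced \emph{from} (ii), not argued independently.

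Your route to the uniform length bound differs from the paper's. The paper argues by comparison: it applies Lemma~\ref{le:escape} to the four extremal ``corner'' collisions with angle $\pm\tilde\theta$ on the long and short sides, extracts a single $n$ bounding their escape times, and then uses (implicitly) monotonicity in position and angle to conclude that any run entered with $|\theta|\ge\tilde\theta$ escapes in at most $n$ bounces. You instead compute the cumulative displacement directly, obtain a uniform overshoot $\varepsilon_0$ from the monotonicity of $\mathcal F$ and the hypothesis $\tilde\theta>\max\{-\theta^-_*,\theta^+_*\}$, and convert it into a run-length bound via the uniform tail estimate $\sum_{i\ge m}\tan(\lambda^{i}\pi/2)$. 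Both are valid; yours is more quantitative, the paper's is shorter.

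A caveat on the edge case you flag (the run starting at $x_1$ when step $0$ is a transition): your handling ``using $(s_0,\theta_0)\notin T$ to exclude trapping and the estimate below to bound its length'' does not work. If $|\theta_0|$ is close to $\pi/2$ then $|\theta_1|=|f_2(\theta_0)|$ is close to $0$, and the subsequent run can be made arbitrarily long, so no uniform bound is available there from your estimate; nor does $x_0\notin T$ prevent $x_1\in T$. The paper finesses this by taking the first transition at a step $m>0$, so that $x_m\in\tilde D$ and Lemma~\ref{le:adjacent} applies; this is enough for the intended application (Corollary~\ref{co:uh-rect}), since invariant subsets lie in $\tilde D$ where the edge case is absent.
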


\begin{proof}
	We can assume without loss of generality that $ s=0 $ and $ s=1 $ correspond to the endpoints of a long side of $ P $, and $ s=1 $ and $ s=1+h $ correspond to the endpoints of the adjacent short side. By \eqref{eq:non-trapped}, we can apply Lemma~\ref{le:escape} to the four collisions $ (0,\tilde{\theta}),(1,-\tilde{\theta}),(1,\tilde{\theta}),(1,-\tilde{\theta}) $ with the angle measured with respect to the normal to the long side for the first two collisions and with respect to the normal of the short side for the last two collisions. We conclude that there exists $ n>0 $ such that the first pair of adjacent collisions in the semi-orbit of each of $ (0,\tilde{\theta}),(1,-\tilde{\theta}),(1,\tilde{\theta}),(1,-\tilde{\theta}) $ comes after a sequence of no more than $ n>0 $ consecutive collisions between two parallel sides of $ P $.
		
	Now, let $ x_{0} \in \hat{M} $. From the definition of $ \hat{M} $, there exists $ m>0 $ such that $ x_{m} $ and $ x_{m+1} $ are adjacent collisions. Hence $ \theta_{m+1} = f_{2}(\theta_{m}) $, and from Lemma~\ref{le:adjacent}, it follows that $ |\theta_{m+1}| \ge \tilde{\theta} $. The conclusion obtained in the previous paragraph implies that we can find $ 1 \le k \le n $ such that the collisions $ x_{m+k} $ and $ x_{m+k+1} $ are adjacent. To complete the proof, we just need to observe that $ n $ depends only on $ \tilde{\theta} $.
\end{proof}

\begin{corollary}
\label{co:uh-rect}
Under the hypotheses of Proposition~\ref{pr:uh-rect}, we have
\begin{enumerate}
	\item $ \hat{M} $ is a forward invariant set,
	\item every invariant subset of $ \hat{M} $ is uniformly hyperbolic,
	\item $ B(\mathcal{P}) = T $.
\end{enumerate}
\end{corollary}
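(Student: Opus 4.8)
The plan is to derive all three parts from Proposition~\ref{pr:uh-rect}, assisted by Lemma~\ref{le:adjacent}, Lemma~\ref{le:bounded-angle} and Proposition~\ref{pr:delta-zero}. I will use throughout that $ M\setminus N^{+}_{\infty}=T\sqcup\hat{M} $ — a point of $ M\setminus N^{+}_{\infty} $ lies in $ T $ exactly when its positive semi-orbit never has a pair of consecutive collisions at adjacent sides — and that $ T $ is forward invariant. The two consequences of Proposition~\ref{pr:uh-rect} I intend to exploit are: for every $ x\in\hat{M} $ the positive semi-orbit has infinitely many pairs of consecutive collisions at adjacent sides of $ P $; and every maximal block of consecutive collisions between parallel sides along the positive semi-orbit of a point of $ \hat{M} $ has at most $ m $ collisions.

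For part~(1), I would first note that $ M\setminus N^{+}_{\infty} $ is forward invariant, since a tail of an infinite orbit is infinite; hence $ \Phi(\hat{M})\subset M\setminus N^{+}_{\infty} $. Given $ x\in\hat{M} $, Proposition~\ref{pr:uh-rect} furnishes some index $ k\ge 1 $ at which $ \Phi^{k}(x) $ and $ \Phi^{k+1}(x) $ hit adjacent sides; that pair lies on the forward orbit of $ \Phi(x) $ as well, so $ \Phi(x)\notin T $, and therefore $ \Phi(x)\in\hat{M} $. For part~(2), let $ \Sigma\subset\hat{M} $ be invariant; then $ \Sigma\subset D $, so $ \Sigma $ is also backward invariant and every orbit in $ \Sigma $ is bi-infinite. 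Consider a maximal block $ x_{a},\ldots,x_{b} $ of consecutive collisions between parallel sides along an orbit in $ \Sigma $. By maximality the pairs $ (x_{a-1},x_{a}) $ and $ (x_{b},x_{b+1}) $ are at adjacent sides (the required orbit points exist because the orbit is bi-infinite), so $ x_{a},\ldots,x_{b} $ is a complete sequence issued from $ x_{a-1}\in\hat{M} $ and hence, by Proposition~\ref{pr:uh-rect}, has at most $ m $ collisions. This is exactly Property~(A) for $ \Sigma $ with constant $ m $, and Proposition~\ref{pr:delta-zero} then gives that $ \Sigma $ is uniformly hyperbolic.

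For part~(3), the inclusion $ T\subset B(\mathcal{P}) $ is immediate and does not even use the standing hypothesis: for $ x\in T $ all consecutive collisions are between parallel sides, so the angles obey $ \theta_{n+1}=f_{1}(\theta_{n}) $; since $ f_{1}=-f $ is a strict contraction fixing $ 0 $, $ \theta_{n}\to 0 $, and since the foot points stay interior to a fixed pair of parallel sides, $ (s_{n},0)\in\mathcal{P} $, so $ d(\Phi^{n}(x),\mathcal{P})\le|\theta_{n}|\to 0 $. For the reverse inclusion I would argue by contradiction: if $ x\in B(\mathcal{P})\setminus T $ then $ x\in\hat{M} $, and Proposition~\ref{pr:uh-rect} gives infinitely many $ k\ge 1 $ at which $ \Phi^{k}(x) $ and $ \Phi^{k+1}(x) $ hit adjacent sides; for such $ k $ one has $ |\theta_{k}|<\pi\lambda/2 $, so $ \Phi^{k}(x)\in\tilde{D} $ (Lemma~\ref{le:bounded-angle}) and Lemma~\ref{le:adjacent} yields $ |\theta_{k+1}|\ge\tilde{\theta} $; as $ \mathcal{P}\subset\{\theta=0\} $, this forces $ d(\Phi^{k+1}(x),\mathcal{P})\ge\tilde{\theta}>0 $ for infinitely many $ k $, contradicting $ x\in B(\mathcal{P}) $. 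Hence $ B(\mathcal{P})=T $.

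The step I expect to be the main obstacle is the reverse inclusion $ B(\mathcal{P})\subset T $ in part~(3): it is the only place where the hypothesis $ \tilde{\theta}>\max\{-\theta^{-}_{*},\theta^{+}_{*}\} $ of Proposition~\ref{pr:uh-rect} is genuinely needed, and it requires carefully marrying the ``infinitely many adjacent returns'' from that proposition with the uniform lower bound of Lemma~\ref{le:adjacent} in order to contradict convergence to the parabolic set $ \mathcal{P}\subset\{\theta=0\} $. The remaining ingredients — forward invariance of $ M\setminus N^{+}_{\infty} $, the partition $ M\setminus N^{+}_{\infty}=T\sqcup\hat{M} $, and the verification of Property~(A) — are routine once Proposition~\ref{pr:uh-rect} is in hand.
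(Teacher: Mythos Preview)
Your proof is correct and follows essentially the same route as the paper's: forward invariance and Property~(A) for $\hat{M}$ come from Proposition~\ref{pr:uh-rect}, uniform hyperbolicity from Proposition~\ref{pr:delta-zero}, and $B(\mathcal{P})=T$ from the dichotomy $M\setminus N^{+}_{\infty}=T\sqcup\hat{M}$. Your treatment of Part~(3) is substantially more explicit than the paper's one-line ``follows from the definition of $\hat{M}$'', and your use of Lemma~\ref{le:adjacent} to produce the uniform lower bound $|\theta_{k+1}|\ge\tilde{\theta}$ along the infinitely many adjacent pairs is a clean way to contradict convergence to $\mathcal{P}\subset\{\theta=0\}$. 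One cosmetic point: your appeal to Lemma~\ref{le:bounded-angle} to justify $\Phi^{k}(x)\in\tilde{D}$ is slightly mis-cited, since that lemma concerns $D_{f}$; the fact you need is simply that $\theta_{k}=f(\bar{\theta}_{k})$ for $k\ge 1$, hence $|\theta_{k}|<\pi\lambda/2$, which is exactly the hypothesis Lemma~\ref{le:adjacent} uses.
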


\begin{proof}
	Proposition~\ref{pr:uh-rect} implies that $ \hat{M} $ is forward invariant and has Property (A). This gives Part~(1), and by using Proposition~\ref{pr:delta-zero}, Part~(2) as well. Part~(3) follows from the definition of $ \hat{M} $.
\end{proof}

\begin{remark}
	From Corollary~\ref{co:uh-rect}, it follows that $ \hat{M} $ contains a hyperbolic attractor $ \mathcal{H} $. This gives us the following dynamical picture: up to a set of zero $ \nu $-measure, $ M $ is the union (mod 0) of two positive $ \nu $-measure sets, the basin of attraction of $ \mathcal{P} $ and the basin of attraction of $ \mathcal{H} $. Although Theorem~\ref{thm bill p<a} is formulated for polygons with $ \mathcal{P} = \emptyset $, it is not difficult to see that we can apply it to $ \mathcal{H} $. The challenging problem here is to prove that for a rectangle (satisfying the hypotheses of the corollary) the hypothesis $ p(S^{+}_{m})<\alpha(\Phi^{m}) $ for some $ m>0 $ of Theorem~\ref{thm bill p<a} is satisfied. 
\end{remark}

\begin{remark}
By modifying properly the definition of $ \tilde{\theta} $ and Condition~\ref{eq:non-trapped}, it is possible to extend Proposition~\ref{pr:uh-rect} to a larger class of polygons with parallel sides. 
\end{remark}

Finally, we give sufficient conditions for having $ B(\mathcal{P}) = M \setminus N^{+}_{\infty} $. Recall that $ \theta^{-} $ and $ \theta^{+} $ are the fixed points of $ f_{2} $.

\begin{proposition}
	\label{pr:parabolic}
Suppose that $ f'> 0 $ and
\begin{gather} 
	h \le \tan \tilde{\theta} \label{eq:adjacent} \\
	f_{1}(\theta^{+}_{*}) < \theta^{-} \quad \text{and} \quad \theta^{+} < f_{1}(\theta^{-}_{*}). \label{eq:trapped} 
\end{gather}
	Then $ B(\mathcal{P}) = M \setminus N^{+}_{\infty} $.
\end{proposition}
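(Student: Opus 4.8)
The plan is to show that, under hypotheses \eqref{eq:adjacent} and \eqref{eq:trapped}, every point of $M\setminus N^+_\infty$ is eventually trapped between a fixed pair of parallel sides, and hence enters $T=B(\mathcal P)$. The argument has two parts: first, a geometric argument that a billiard trajectory which has just undergone a pair of adjacent collisions must enter a complete sequence of collisions between the long sides of $P$ (this is where \eqref{eq:adjacent} is used); second, an argument that once such a complete sequence begins, Condition~\eqref{eq:trapped} forces the incidence angle into the trapping window \eqref{eq:cond-trapped} of Lemma~\ref{le:trapped}, so the trajectory is trapped forever. Combining the two, no trajectory can leave the long sides once it has reached them after an adjacent pair, and by the structure of rectangular billiards every infinite positive semi-orbit either is already of this form or is in $T$.

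Here are the steps in order. \emph{Step 1.} Recall the dichotomy for rectangular billiards: a trajectory through adjacent sides alternates between runs of consecutive collisions at one pair of parallel sides and single transitions at the other pair; the angle dynamics is governed by $f_1$ on the parallel runs and $f_2$ at the adjacent transitions. \emph{Step 2.} Take any $x_0\in M\setminus N^+_\infty$. If $x_0\in T$ we are done, so assume $x_0\in\hat M$; then $x_n,x_{n+1}$ are adjacent for some $n$, and by Lemma~\ref{le:adjacent}, $|\theta_{n+1}|\ge\tilde\theta$. \emph{Step 3.} Use \eqref{eq:adjacent}, i.e. $h\le\tan\tilde\theta$, to show that after the adjacent collision at $x_{n+1}$ the particle, now heading away from a short side, cannot hit that short side again on the next bounce; it must instead hit one of the long sides. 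Geometrically, the horizontal displacement over one bounce against the long side is $\tan|\theta|$ (in units where the long side has length $1$), and $\tan|\theta_{n+1}|\ge\tan\tilde\theta\ge h$ ensures the trajectory spans the full width $h$ and therefore begins a complete sequence of collisions between the long sides; one should check the boundary bookkeeping (arclength parameters $s=0,1$) so that this is indeed a \emph{complete} sequence in the sense of the definition preceding Lemma~\ref{le:trapped}. \emph{Step 4.} Let $x_m$ be the first element of this complete sequence, with angle $\theta_m$. Since $x_{m-1},x_m$ are adjacent, Lemma~\ref{le:adjacent} again gives $|\theta_m|\ge\tilde\theta$, hence $\theta_m\le-\tilde\theta<0$ or $\theta_m\ge\tilde\theta>0$; but this step requires instead the \emph{complementary} control, namely that $\theta_m$ lies in the trapping interval $[f_1(\theta^+_*),f_1(\theta^-_*)]$. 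This is exactly what \eqref{eq:trapped} supplies: the hypotheses $f_1(\theta^+_*)<\theta^-$ and $\theta^+<f_1(\theta^-_*)$, together with the fact that the value $\theta_m=f_2(\theta_{m-1})$ lands between the fixed points $\theta^-$ and $\theta^+$ of $f_2$ (since $f_2$ maps $\tilde D$-angles into $(\theta^-,\theta^+)$, being a decreasing contraction with those fixed points), force $f_1(\theta^+_*)<\theta^-\le\theta_m\le\theta^+<f_1(\theta^-_*)$, i.e. \eqref{eq:cond-trapped} holds. \emph{Step 5.} Apply Lemma~\ref{le:trapped} to conclude $x_m\in T$, hence $x_0\in T=B(\mathcal P)$. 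Since $x_0$ was an arbitrary point of $M\setminus N^+_\infty$, we get $B(\mathcal P)=M\setminus N^+_\infty$.

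The main obstacle I anticipate is Step 3 and Step 4, and specifically the interface between them: one must verify that after an adjacent collision the angle is large enough to start a complete sequence between the long sides (needs \eqref{eq:adjacent}), yet \emph{also} that the angle with which one enters that complete sequence — which is $f_2$ of the previous angle, not $f_2$ of $\pm\pi/2$ — necessarily falls inside $(\theta^-,\theta^+)$ so that \eqref{eq:trapped} can be invoked. The point is that $f_2$ maps every angle arising from a trajectory in $\tilde D$ into the interval between its own fixed points (this uses $f_2$ decreasing, a contraction on each half, with fixed points $\theta^\pm$, plus Lemma~\ref{le:bounded-angle} to keep incidence angles bounded away from $\pm\pi/2$), and it is this image interval that must be contained in $[f_1(\theta^+_*),f_1(\theta^-_*)]$. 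Care is also needed with the orientation/sign conventions (which endpoint of the long side is $s=0$) so that the correct one of $\theta^\pm_*$ is paired with the correct inequality in \eqref{eq:trapped}; this is the same bookkeeping already used in Lemmas~\ref{le:trapped}--\ref{le:escape}.
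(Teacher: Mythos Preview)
Your Step~4 contains a genuine error that breaks the argument. You assert that ``$f_2$ maps $\tilde D$-angles into $(\theta^-,\theta^+)$, being a decreasing contraction with those fixed points'', but this is false: since $f_2$ is \emph{decreasing} on $(0,\pi/2)$ with fixed point $\theta^+$, any $\theta\in(0,\theta^+)$ is sent to $f_2(\theta)>f_2(\theta^+)=\theta^+$, i.e.\ \emph{outside} the interval $(\theta^-,\theta^+)$. (In fact one checks $\tilde\theta<\theta^+$, so the lower bound $|\theta_{m-1}|\ge\tilde\theta$ from Lemma~\ref{le:adjacent} does not force $\theta_{m-1}$ past the fixed point either.) Thus there is no reason for the entry angle $\theta_m=f_2(\theta_{m-1})$ of your complete sequence to lie in the trapping window $[f_1(\theta^+_*),f_1(\theta^-_*)]$ after a single adjacent transition, and Lemma~\ref{le:trapped} cannot be invoked. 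Step~3 is also incomplete: after an adjacent pair you may well get another adjacent pair (alternating long/short sides) rather than immediately starting a complete sequence between the long sides; condition~\eqref{eq:adjacent} only tells you that a collision on a short side with $|\theta|\ge\tilde\theta$ is followed by a collision on a long side, not that the subsequent collisions stay on the long sides.

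The paper's proof avoids this by arguing by contradiction and splitting into two cases: either the orbit is eventually purely adjacent, in which case $\theta_n=f_2^{\,n-m}(\theta_m)\to\theta^\pm$ and~\eqref{eq:trapped} eventually places $\theta_n$ in the trapping window; or there are infinitely many complete sequences, and one controls the angle at the start of each new complete sequence via a \emph{parity} argument on the number of intervening adjacent collisions. An odd number of $f_2$-iterates lands the angle strictly between $0$ and a fixed point (hence trapped by~\eqref{eq:trapped}); an even number, combined with~\eqref{eq:trapped} and monotonicity of $f_2^2$, gives $|\theta_{m_{k+1}}|\le\lambda^{n_k-m_k}|\theta_{m_k}|$, forcing $|\theta_{m_k}|\to 0$ and eventual trapping. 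This inductive/parity mechanism is precisely what your direct one-step argument is missing.
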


\begin{proof}
	Let $ x_{0} \in M \setminus N^{+}_{\infty} $. As usual, we write $ x_{n} = (s_{n},\theta_{n}) = \Phi^{n}(x_{0}) $. If $ \theta_{0} = 0 $, then $ x_{0} \in \mathcal{P} $, and there is nothing to prove. Hence, we may assume that $ \theta_{0} \neq 0 $. In this case, arguing by contradiction, we show that there exists $ n>0 $ such that $ x_{n} \in T $. 

Suppose that $ x_{n} \notin T $ for every $ n \ge 0 $. Then two cases can occur: either there exists $ m>0 $ such that all collisions $ x_{m},x_{m+1},\ldots $ are adjacent, or there exist two sequences $ \{m_{k}\} $ and $ \{n_{k}\} $ of positive integers such that $ 0 \le m_{k} < n_{k} < m_{k+1} $ and $ x_{m_{k}},\ldots,x_{n_{k}} $ is a complete sequence of collisions between the long sides of $ P $ for every $ k \ge 0 $.

In the first case, we have 
\[ 
\theta_{n} = f^{n-m}_{2}(\theta_{m}) \quad \text{for } n \ge m. 
\]
By the properties of $ f_{2} $, it follows that either $ \theta_{n} \to \theta^{-} $ or $ \theta_{n} \to \theta^{+} $ as $ n \to +\infty $. Condition~\eqref{eq:trapped} guarantees the existence of $ n>m $ such that $ x_{n} $ is a collision at one of the long sides of $ P $, and $  f_{1}(\theta^{*}_{+}) < \theta_{n} < f_{1}(\theta^{-}_{*}) $. Lemma~\ref{le:trapped} implies that that $ x_{n} \in T $, obtaining a contradiction.
	
We now consider the second case. Using Lemma~\ref{le:adjacent}, we see that Condition~\eqref{eq:adjacent} implies that $ x_{n_{k}+1} $ and $ x_{n_{k}+2} $ are adjacent collisions. The same is true, as a direct consequence of the definition of $ m_{k} $ and $ n_{k} $, for the sequences $ x_{n_{k}},x_{n_{k}+1} $ and $ x_{n_{k}+2},\ldots,x_{m_{k+1}} $ (possibly with $ n_{k}+2 = m_{k+1} $). Hence
\[
\theta_{m_{k+1}} = f_{2}^{m_{k+1}-n_{k}}(\theta_{n_{k}}).
\]
If $ m_{k+1}-n_{k} $ is odd, then from the properties of $ f_{2} $, we deduce that either $ 0 < \theta_{m_{k+1}} < \theta^{-} $ or $ 0 < \theta_{m_{k+1}} < \theta^{+} $. This together with Condition~\eqref{eq:trapped} and Lemma~\ref{le:trapped} gives the contradiction $ x_{m_{k+1}} \in T $. Now, suppose that $ m_{k+1}-n_{k} $ even. The assumption $ x_{m_{k}} \notin T $ implies that $ \theta_{m_{k+1}} < f_{1}(\theta^{+}_{*}) $ or $  f_{1}(\theta^{-}_{*}) < \theta_{m_{k+1}} $. Using  Condition~\eqref{eq:trapped}, we see that $ \theta_{m_{k+1}} < \theta^{-} $ or $ \theta^{+} < \theta_{m_{k+1}} $. This and the fact that $ f^{2}_{2} $ is increasing imply that
\[
|\theta_{m_{k+1}}| \le |\theta_{n_{k}}|.
\]
Since $ \theta_{n_{k}} = f^{n_{k}-m_{k}}_{1}(\theta_{m_{k}}) $, we have
\[
|\theta_{m_{k+1}}| \le |f^{n_{k}-m_{k}}_{1}(\theta_{m_{k}})| \le \lambda^{n_{k}-m_{k}}|\theta_{m_{k}}|.
\]
From this, it follows that $ |\theta_{m_{k}}| \to 0 $ as $ k \to + \infty $. Thus there exists $ k>0 $ such that $ f_{1}(\theta^{+}_{*}) < \theta_{m_{k}} < f_{1}(\theta^{-}_{*}) $, and again using Lemma~\ref{le:trapped}, we obtain the contradiction $ x_{m_{k}} \in T $. This completes the proof.
\end{proof}

\begin{remark}
When $ f $ is odd, the hypotheses of Propositions~\ref{pr:uh-rect} and \ref{pr:parabolic} simplify significantly. Indeed, in this case, we have $ \theta^{-}_{*} = -\theta^{+}_{*} $, $ \theta^{-} = - \theta^{+} $ and $ \tilde{\theta} = f(\pi/2-f(\pi/2)) $.
\end{remark}

\begin{remark}
If $ f(\theta) = \sigma \theta $, then it is easy to show that Condition~\ref{eq:non-trapped} is satisfied for every $ 0 < \sigma < 1 $ and every $ 0 < h \le 1 $ such that $ 2/\pi < \sigma h $ (see also \cite[Proposition~V.9]{MDDGP12} for the square). Moreover, given $ 0 < \sigma < 1 $, Conditions~\eqref{eq:adjacent} and \eqref{eq:trapped} are satisfied for $ h $ sufficiently small.
\end{remark}

\appendix
\section{Proofs of some technical results}
\label{ap:first}

\begin{proof}[Proof of Proposition~\ref{pr:prop-sing}]
Let $ B_{k} = [\tilde{s}_{k},\tilde{s}_{k+1}] \times (-\pi/2,\pi/2) $ for $ 1 \le k \le n $, and let $ \gamma_{jk} = q^{-1}_{1}(C_{j}) \cap B_{k} $ with $ 1 \le j < k $ or $ k+1 < j \le n $. Then we have
\[
S^{+}_{1} = \left(\bigcup^{n}_{\substack{k=1 \\ 1 \le j \le k-1}} \overline{\gamma_{jk}}\right) \cup  \left(\bigcup^{n}_{\substack{k=1 \\ k+2 \le j \le n}} \overline{\gamma_{jk}}\right).
\]
We show that the sets $ \overline{\gamma_{jk}} $ are indeed the curves $\Gamma_{1},\ldots,\Gamma_{m} $. Fix $ j $ and $ k $. We can always find a Cartesian coordinate system $ (X,Y) $ of $ \Rr^{2} $ such that the side $ L_{k} $ of $ P $ lies on the $ X $-axis, and has the same orientation as the $ X $-axis. If $ (s,\theta) \in \gamma_{jk} $, then
\[ 
\theta = \varphi_{jk}(s) := \arctan \left(\frac{\bar{s}-s}{l}\right),
\]
where $ l $ is the distance between $ C_{j} $ and $ L_{k} $, and $ \bar{s} \, $ is the abscissa of the vertex $ C_{j} $. It is easy to see that $ \gamma_{jk} $ is the graph of $ \varphi_{jk} $ over an open interval $ I_{jk} \subsetneq (\tilde{s}_{k},\tilde{s}_{k+1}) $ if there exist $ x \in \partial \gamma_{jk} \setminus V $ and a vertex $ C_{i} $ with $ i \neq j $ such that $ q_{1}(x) = C_{i} $. Otherwise, $ \gamma_{jk} $ is the graph of $ \varphi_{jk} $ over the entire interval $ (\tilde{s}_{k},\tilde{s}_{k+1}) $. The function $ \varphi_{jk} $ admits a unique smooth extension up to the boundary of $ I_{jk} $. If we denote by $ \bar{\varphi}_{jk} $ such an extension, then $ \overline{\gamma_{jk}} = \operatorname{graph}(\bar{\varphi}_{jk}) $. Moreover, 
\begin{equation}
	\label{eq:der}
\overline{\varphi}'_{jk}(s) = - \frac{l}{l^{2}+(\bar{s}-s)^{2}} < 0 \qquad \text{for } s \in \overline{I_{jk}}.
\end{equation}
It is not difficult to see from \eqref{eq:der} that if $ \bar{\varphi}_{ik}(s) = \bar{\varphi}_{jk}(s) $ for some $ s \in [\tilde{s}_{k},\tilde{s}_{k+1}] $ and some $ i \neq j $, then $ \bar{\varphi}'_{ik}(s) \neq \bar{\varphi}'_{jk}(s) $. This conclusion and previous observations prove Parts~(1)-(3) of the proposition.

Part~(4) of the proposition is proved by contradiction. Suppose that there exists $ x \in \gamma_{ik} \cap \gamma_{jk} $ for some $ i \neq j $. Then we would have simultaneously $ q_{1}(x) = C_{i} $ and $ q_{1}(x) = C_{j} $, implying that $ i = j $. This completes the proof.
\end{proof}

\begin{proof}[Proof of Proposition~\ref{pr:singminus}]
First of all, recall that $ S^{+}_{1} $ and $ S^{-}_{1} $ are diffeomorphic. Since $ DF(s,\theta) = \diag(1,-f'(\theta)) $, the curves forming $ S^{-}_{1} $ are strictly increasing if $ f'>0 $ and strictly decreasing if $ f'<0 $. 
\end{proof}

\begin{proof}[Proof of Proposition~\ref{pr:zero1}]
The Jacobian $ J(s_{0},\theta_{0}) $ of $ \Phi_{f} $ at $ (s_{0},\theta_{0}) \in D_{f} $ is given by 
\begin{align}
	\label{eq:jac}
	J(s_{0},\theta_{0}) = \left|\frac{\cos \theta_{0}}{\cos \bar{\theta}_{1}}\right| \cdot |f'(\bar{\theta}_{1})|,
\end{align}
where $ (s_{1},\theta_{1}) = \Phi_{f}(s_{0},\theta_{0}) $. By Formula~\eqref{eq:angle-after}, we have
\begin{equation}
	\label{eq:angl}
	\bar{\theta}_{1} = \pi - \delta(L_{0},L_{1}) - \theta_{0},
\end{equation}
where $ L_{0} $ and $ L_{1} $ are the sides of $ P $ containing $ q(s_{0},\theta_{0}) $ and $ q(s_{1},\theta_{1}) $, respectively. The hypothesis of the proposition implies that there exists $ a> 0 $ such that
\begin{equation}
	\label{eq:pos}
	\cos(\pi - \delta(L_{0},L_{1})) \ge a \qquad \text{for } (s_{0},\theta_{0}) \in D_{f}.
\end{equation}
From \eqref{eq:jac}-\eqref{eq:pos} and the fact that $ |\theta_{0}| \le \lambda(f) \pi/2 $, it follows that
\[ 
J(s_{0},\theta_{0}) \le \frac{\lambda(f)}{|\cos (\pi - \delta - \theta_{0})|} \xrightarrow[\lambda(f) \to 0^{+}]{} 0
\] 
uniformly in $ (s_{0},\theta_{0}) \in D_{f} $. Hence, there exists $ \bar{J}<1 $ such that for $ \lambda(f) $ sufficiently small, we have $ J(s_{0},\theta_{0}) \le \bar{J} $ for every $ (s_{0},\theta_{0}) \in D_{f} $. This easily easily implies the conclusion of the proposition.
\end{proof}

\begin{proof}[Proof of Proposition~\ref{pr:zero2}]
The proposition follows immediately from the fact that $ J(s_{0},\theta_{0}) < 1 $ for every $ (s_{0},\theta_{0}) \in D_{f} $, which in turn is a direct consequence of \eqref{eq:jac} and the hypothesis of the proposition.
\end{proof}

\label{ap:second}

We now establish some facts about Chebyshev polynomials that are used in Section~\ref{sec:regpoly} to analyze the slap map of odd sided regular polygons. The slopes of these maps are trigonometric algebraic numbers having Chebyshev polynomials as their minimal polynomials. We use the following lemma due to Gauss \cite[Lemma 3.17]{IS}.

\begin{lemma}
Given a  polynomial $F(x)\in\Z[x]$,
if $F(x)$ factors as $P(x) Q(x)$ in $\Q[x]$ it also factors
as $\overline{P}(x) \overline{Q}(x)$ in $\Z[x]$.
More precisely there is some constant $c\in\Q\setminus\{0\}$ such that
$\overline{P}(x)=c \,P(x)\in \Z[x]$ and $\overline{Q}(x)=c^ {-1} \,Q(x)\in \Z[x]$.
Hence, given a polynomial in $\Z[x]$, it is irreducible over $\Z[x]$ iff
it is irreducible over $\Q[x]$.
\end{lemma}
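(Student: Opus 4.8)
The plan is to recall Gauss's classical two-step argument: multiplicativity of content, followed by clearing denominators. For a nonzero $F\in\Z[x]$ let its \emph{content} $c(F)$ be the greatest common divisor of its coefficients, and call $F$ \emph{primitive} when $c(F)=1$; then $F=c(F)\,F^{\ast}$ with $F^{\ast}$ primitive, and any nonzero $P\in\Q[x]$ can be written $P=r\,P_{1}$ with $r$ a positive rational and $P_{1}\in\Z[x]$ primitive (clear a common denominator, then pull out the content). With these normal forms in place the statement reduces to a single nontrivial fact.

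That fact --- which I expect to be the only real content of the proof --- is \emph{Gauss's lemma proper}: if $G,H\in\Z[x]$ are primitive then so is $GH$. I would argue by contradiction. If some prime $p$ divided every coefficient of $GH$, then reducing modulo $p$ would give $\overline{G}\,\overline{H}=\overline{GH}=0$ in $\Ff_{p}[x]$; since $\Ff_{p}$ is a field, $\Ff_{p}[x]$ is an integral domain, so $\overline{G}=0$ or $\overline{H}=0$, i.e.\ $p$ divides all coefficients of $G$ or of $H$, contradicting primitivity. This reduction-mod-$p$ step is the crux; everything else is bookkeeping.

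Granting it, the lemma follows. Given $F\in\Z[x]$ with $F=PQ$ in $\Q[x]$, write $P=r\,P_{1}$ and $Q=s\,Q_{1}$ with $r,s$ positive rationals and $P_{1},Q_{1}\in\Z[x]$ primitive, so $F=(rs)\,P_{1}Q_{1}$ with $P_{1}Q_{1}$ primitive. Writing $rs=u/v$ in lowest terms with $u,v>0$ and comparing contents in the identity $vF=u\,P_{1}Q_{1}$ gives $v\,c(F)=u$, hence $v\mid u$, hence $v=1$; thus $rs=u\in\Z$ and $F=u\,P_{1}Q_{1}$. Then $\overline{P}:=P_{1}\in\Z[x]$ and $\overline{Q}:=u\,Q_{1}\in\Z[x]$ satisfy $\overline{P}\,\overline{Q}=F$, and with $c:=1/r\in\Q\setminus\{0\}$ one has $\overline{P}=cP$ and $\overline{Q}=(rs)Q_{1}=rQ=c^{-1}Q$, as required. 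For the final assertion, a factorization of $F\in\Z[x]$ into two positive-degree factors over $\Q$ produces, by what was just shown, one over $\Z$, while any factorization over $\Z$ is a fortiori one over $\Q$. The one point to flag is that ``irreducible'' must here be read as ``admitting no factorization into factors of positive degree'' (equivalently, one restricts to primitive $F$): for non-primitive polynomials such as $2x+4$ the equivalence fails because of constant factors. This does not affect the intended application in Section~\ref{sec:regpoly}, where the relevant polynomial is a minimal polynomial and hence primitive.
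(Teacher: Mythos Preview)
Your proof is correct and is the standard Gauss argument. Note, however, that the paper does not actually prove this lemma: it is stated with attribution to Gauss and a citation to \cite[Lemma~3.17]{IS} (Stewart's \emph{Galois Theory}), and no proof is given in the paper itself. So there is nothing to compare against beyond observing that what you wrote is precisely the classical proof one finds in that reference, and your caveat about the meaning of ``irreducible'' is well taken.
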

   
The {\em Chebyshev polynomials} of the first kind are defined recursively by $$T_0(x)=1,
\quad T_1(x)=x, $$ $$T_{n+1}(x)=2\,x\, T_n(x)-T_{n-1}(x)\;,$$ from where it follows easily,
by induction, that 
\begin{equation}\label{Chebyshev:relation}
T_n(\cos(\theta))=\cos(n\theta)\;. 
\end{equation} 

Chebyshev polynomials are related in~\cite{WZ} with the minimal polynomials $\Psi_n(x)$ of the trigonometric algebraic numbers of the form $\beta_n=\cos(\frac{2\pi}{n})$, which are real parts of unity roots. The minimal polynomial $\Psi_n(x)$ is considered there as a monic polynomial with rational coefficients, i.e., $\Psi_n(x)\in \Q[x]$. The relation established is the following:

\begin{theorem}[\cite{WZ}]\label{WZ:thm}
For any integer $n\geq 1$,
\begin{enumerate}
\item[(a)] if $n=2\,s+1$,\, then\;
 $\displaystyle T_{s+1}(x)-T_s(x) = 2^s\,\prod_{d\vert n} \Psi_d(x) $;\\
\item[(b)]  if $n=2\,s$,\, then\;
 $\displaystyle  T_{s+1}(x)-T_{s-1}(x) = 2^s\,\prod_{d\vert n} \Psi_d(x) $.
\end{enumerate}
\end{theorem}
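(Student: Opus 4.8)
The plan is to pin down the roots of the polynomial on the left-hand side explicitly, check that it has the right degree and leading coefficient, and then recognize the resulting linear factorization as $\prod_{d\mid n}\Psi_d(x)$ by bookkeeping over residues modulo $n$.

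First I would set $x=\cos\theta$ with $\theta\in[0,\pi]$ and apply \eqref{Chebyshev:relation} together with the identity $\cos A-\cos B=-2\sin\frac{A+B}{2}\sin\frac{A-B}{2}$. In case (a), with $n=2s+1$, this gives
\[
T_{s+1}(\cos\theta)-T_s(\cos\theta)=\cos((s+1)\theta)-\cos(s\theta)=-2\sin\!\left(\tfrac{n\theta}{2}\right)\sin\!\left(\tfrac{\theta}{2}\right),
\]
which vanishes on $[0,\pi]$ precisely for $\theta=\frac{2k\pi}{n}$, $k=0,1,\dots,s$. In case (b), with $n=2s$, one gets $\cos((s+1)\theta)-\cos((s-1)\theta)=-2\sin(s\theta)\sin\theta$, vanishing precisely for $\theta=\frac{2k\pi}{n}$, $k=0,1,\dots,s$. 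Since $\cos$ is injective on $[0,\pi]$, the $s+1$ numbers $\cos\frac{2k\pi}{n}$ are pairwise distinct, hence simple roots. Because $T_{s+1}$ has degree $s+1$ and leading coefficient $2^s$ while $T_s$ (resp.\ $T_{s-1}$) has strictly smaller degree, the left-hand side is in both cases a polynomial of degree $s+1$ with leading coefficient $2^s$, so
\[
\text{LHS}=2^s\prod_{k=0}^{s}\left(x-\cos\tfrac{2k\pi}{n}\right).
\]

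Next I would match this product with $\prod_{d\mid n}\Psi_d(x)$. The key observation is that $\{0,1,\dots,s\}$ is a complete set of representatives of $\Z/n\Z$ under the relation $k\sim-k$: when $n$ is odd the only fixed point is $k=0$, while when $n$ is even the fixed points are $k=0$ and $k=n/2=s$. Hence $\{\cos\frac{2k\pi}{n}:0\le k\le s\}$ is exactly the set of distinct real parts of the $n$-th roots of unity. Grouping these roots of unity by their order $d\mid n$ partitions $\{\cos\frac{2k\pi}{n}:0\le k\le s\}$ into blocks, one per divisor $d$, the block for $d$ consisting of the distinct numbers $\cos\frac{2\pi j}{d}$ with $\gcd(j,d)=1$; each such block is precisely the set of Galois conjugates of $\beta_d=\cos\frac{2\pi}{d}$ over $\Q$, that is, the root set of its minimal polynomial $\Psi_d$. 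Since each $\Psi_d$ is monic and the root sets for distinct $d$ are disjoint, multiplying them out gives $\prod_{d\mid n}\Psi_d(x)=\prod_{k=0}^{s}\left(x-\cos\frac{2k\pi}{n}\right)$, and combining with the previous step proves both (a) and (b).

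I expect the only delicate point to be that last identification, namely that the conjugates of $\cos(2\pi/d)$ are exactly $\{\cos(2\pi j/d):\gcd(j,d)=1\}$ with no repetitions, so that $\deg\Psi_d=\phi(d)/2$ for $d\ge3$ (and $=1$ for $d\in\{1,2\}$) and hence $\sum_{d\mid n}\deg\Psi_d$ equals exactly $s+1$ with no roots lost to multiplicity. This is where the hypothesis that $\Psi_d$ is \emph{the} minimal polynomial of $\cos(2\pi/d)$ is essential; one invokes the Galois theory of the maximal real subfield $\Q(\cos 2\pi/d)$ of $\Q(e^{2\pi i/d})$. Everything else is the trigonometric computation above plus elementary counting modulo $n$.
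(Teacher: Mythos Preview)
Your argument is correct. Note, however, that the paper does not supply its own proof of this theorem: it is quoted verbatim as a result of Watkins and Zeitlin \cite{WZ}, and the paper only uses it to derive the two subsequent corollaries about $\tilde\Psi_n$. So there is no ``paper's proof'' to compare against; what you have written is essentially the standard proof (and, in fact, the one in \cite{WZ}): identify the roots of the left-hand side via the sum-to-product formula applied to $T_m(\cos\theta)=\cos(m\theta)$, check the leading coefficient $2^s$, and then partition the root set $\{\cos(2k\pi/n):0\le k\le s\}$ according to the exact order $d\mid n$ of the associated root of unity, using that the Galois conjugates of $\cos(2\pi/d)$ over $\Q$ are precisely the $\cos(2\pi j/d)$ with $\gcd(j,d)=1$.

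One small point worth making explicit in case (b): at the endpoints $\theta=0$ and $\theta=\pi$ both factors $\sin(s\theta)$ and $\sin\theta$ vanish, so the zero in $\theta$ is double; but since $x-1\sim-\theta^2/2$ near $\theta=0$ and $x+1\sim(\pi-\theta)^2/2$ near $\theta=\pi$, these become \emph{simple} zeros in $x$, and your count of $s+1$ simple roots goes through. Your degree bookkeeping $\sum_{d\mid n}\deg\Psi_d=s+1$ is also correct once one separates the contributions $d=1$ (and $d=2$ when $n$ is even), each of degree $1$, from the divisors $d\ge3$, each contributing $\phi(d)/2$.
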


This theorem allows a recursive calculation of the polynomials $\Psi_n(x)$. One obtains for
instance $\Psi_1(x)=x-1$, $\Psi_2(x)=x+1$, $\Psi_3(x)=x+\frac{1}{2}$, $\Psi_4(x)=x$ and
$\Psi_5(x)= x^2+\frac{1}{2}\,x-\frac{1}{4}$. Define now $\tilde{\Psi}_1(x)=\Psi_1(x)=x-1$,
and $\tilde{\Psi}_n(x)=2^{{\rm deg}(\Psi_n)}\, \Psi_n(x)$ for every $n\geq 2$. Denote by
$\phi(n)$ the Euler function, which counts the number of integers co-prime with $n$ in the
range $[1,n]$.
 
\begin{corollary} For $n>2$, 
$\tilde{\Psi}_n(x)$ is a polynomial in $\Z[x]$ of degree $\phi(n)/2$. Moreover,
 \begin{enumerate}
\item[(a)]  if $n=2\,s+1$,\, then\;
 $\displaystyle T_{s+1}(x)-T_s(x) =  \prod_{d\vert n} \tilde{\Psi}_d(x) $, and
\item[(b)]  if $n=2\,s$,\, then \;
 $\displaystyle  T_{s+1}(x)-T_{s-1}(x) =  \prod_{d\vert n} \tilde{\Psi}_d(x) $.
\end{enumerate}
\end{corollary}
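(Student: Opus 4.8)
The plan is to deduce the three assertions---that $\deg\tilde\Psi_n=\phi(n)/2$, that $\tilde\Psi_n\in\Z[x]$, and the product identities (a) and (b)---from Theorem~\ref{WZ:thm}, Gauss' lemma, the identity $\sum_{d\mid n}\phi(d)=n$, and the elementary fact that each $T_m$ lies in $\Z[x]$ with leading coefficient $2^{m-1}$ for $m\ge1$. I would start with the degree. Writing $D_m:=\deg\Psi_m$, so that $D_1=D_2=1$, I would prove $D_n=\phi(n)/2$ for $n>2$ by strong induction on $n$: comparing degrees in Theorem~\ref{WZ:thm}---whose left-hand side has degree $s+1$ and whose right-hand side is $2^s$ times the monic polynomial $\prod_{d\mid n}\Psi_d$---yields $\sum_{d\mid n}D_d=s+1$, and isolating the exceptional terms $D_1=1$ (and $D_2=1$ when $n$ is even) and using $\sum_{d\mid n}\phi(d)=n$ together with the inductive hypothesis $D_d=\phi(d)/2$ for proper divisors $d>2$ forces $D_n=\phi(n)/2$ (in particular $\phi(n)$ is even). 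One could instead quote the classical value $[\Q(\cos(2\pi/n)):\Q]=\phi(n)/2$, valid because $\Q(\zeta_n)$ has degree $\phi(n)$ over $\Q$ and degree $2$ over its maximal real subfield when $n>2$; I would keep the argument self-contained.

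Next I would establish $\tilde\Psi_n\in\Z[x]$. Set $\zeta=e^{2\pi i/n}$, so that $2\beta_n=\zeta+\zeta^{-1}$ is a sum of two roots of $x^n-1$ and hence an algebraic integer. Since $\Psi_n$ is the monic minimal polynomial of $\beta_n$ over $\Q$, the polynomial $h(x):=2^{D_n}\Psi_n(x/2)$ is monic of degree $D_n$, vanishes at $2\beta_n$, and is irreducible over $\Q$ (a nontrivial factorization of $h$ would, after the substitution $x\mapsto2x$ and rescaling, factor $\Psi_n$); hence $h$ is the minimal polynomial of the algebraic integer $2\beta_n$, and therefore $h\in\Z[x]$. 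Writing $\Psi_n(x)=\sum_{k\le D_n}c_k x^k$, integrality of $h$ says $2^{D_n-k}c_k\in\Z$ for every $k$, so the $k$-th coefficient $2^{D_n}c_k=2^{k}(2^{D_n-k}c_k)$ of $\tilde\Psi_n=2^{D_n}\Psi_n$ is an integer. (The same conclusion follows by applying Gauss' lemma to the factorization of $T_{s+1}-T_s$ in Theorem~\ref{WZ:thm} together with the induction, but this route is shorter, and it ultimately relies only on Gauss' lemma.)

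Finally, for (a) and (b): by the definition of $\tilde\Psi$ one has $\prod_{d\mid n}\tilde\Psi_d(x)=\Psi_1(x)\prod_{d\mid n,\,d\ge2}2^{D_d}\Psi_d(x)=2^{E}\prod_{d\mid n}\Psi_d(x)$ with $E=\sum_{d\mid n,\,d\ge2}D_d$, so by Theorem~\ref{WZ:thm} it suffices to verify $E=s$. If $n=2s+1$, then every divisor $d\ge2$ of $n$ exceeds $2$, so $E=\tfrac12\sum_{d\mid n,\,d\ge3}\phi(d)=\tfrac12(n-1)=s$; if $n=2s$, then $E=D_2+\tfrac12\sum_{d\mid n,\,d\ge3}\phi(d)=1+\tfrac12(n-2)=s$---both using $\sum_{d\mid n}\phi(d)=n$ and the degree formula just proved. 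The hard part, and the place I would be most careful, is precisely this power-of-two bookkeeping: the two exceptions $D_1=D_2=1$ to the rule $D_d=\phi(d)/2$ are what make the product over all $d\mid n$ come out right, and they are exactly why the degree statement is restricted to $n>2$. As a byproduct worth recording, evaluating (a)/(b) at $x=0$ and using $T_m(0)=\cos(m\pi/2)$ pins down $\tilde\Psi_d(0)=\pm1$ for every divisor $d$ of an odd $n\ge3$, and, after a short factorization, $\tilde\Psi_{2d}(0)=\pm1$ for odd $d$---the integrality fact used in the proof of Proposition~\ref{slap map not pre-per}.
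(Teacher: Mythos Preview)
Your proof is correct and follows the same overall strategy as the paper: deduce everything from Theorem~\ref{WZ:thm} together with the divisor-sum identity $\sum_{d\mid n}\phi(d)=n$, first getting the degrees and then matching the power of $2$ on each side. The paper's own proof is terse---it cites \cite{DL} for the degree formula and says only ``induction using Theorem~\ref{WZ:thm} and Gauss' lemma'' for the rest---so you have filled in exactly the details one would want.

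The one place you genuinely diverge is the integrality of $\tilde\Psi_n$. The paper's sketch suggests an inductive application of Gauss' lemma to the factorization in Theorem~\ref{WZ:thm}; you instead observe that $2\cos(2\pi/n)=\zeta+\zeta^{-1}$ is an algebraic integer, so its monic minimal polynomial $h(x)=2^{D_n}\Psi_n(x/2)$ lies in $\Z[x]$, and then read off $2^{D_n}\Psi_n\in\Z[x]$ coefficient by coefficient. This is cleaner and non-inductive, and it avoids the bookkeeping of tracking contents through the product over divisors. Both routes are standard; yours is the more direct.

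Your closing remark about $\tilde\Psi_d(0)=\pm1$ correctly anticipates the content of the next corollary (Corollary~\ref{Psin:coef}), though it is not needed for the statement at hand.
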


\begin{proof}
The degree of the polynomial $\Psi_n(x)$, or $\tilde{\Psi}_n(x)$, was computed
 in~ \cite{DL} to be $\phi(n)/2$.
The proof goes by induction in $n$, using Theorem~\ref{WZ:thm} and Gauss' lemma.
Notice that ${\rm deg}(T_{s})=s$.
\end{proof}
 
From the previous computations we get, for instance, $\tilde{\Psi}_1(x)=x-1$,
$\tilde{\Psi}_2(x)=2x+2$, $\tilde{\Psi}_3(x)=2x+1$, $\tilde{\Psi}_4(x)=2 x$ and
$\tilde{\Psi}_5(x)=4x^ 2+2x-1$.
 
\begin{corollary}\label{Psin:coef}
For any $n\geq 5$, the minimal polynomial $\tilde{\Psi}_n$ has leading coefficient $2^ r$,
where $r=\phi(n)/2$, and unit constant term $\tilde{\Psi}_n(0)=\pm 1$. Moreover,
$r=\phi(n)/2\geq 2$ when $n\geq 7$.
\end{corollary}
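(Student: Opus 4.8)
The plan is to derive all three assertions from the relation $\tilde\Psi_n(x)=2^{r}\,\Psi_n(x)$ with $r=\deg\Psi_n$, together with the $\tilde\Psi$-form of Theorem~\ref{WZ:thm} established just above. The leading-coefficient claim is immediate: by the degree computation of~\cite{DL} recalled above, $\Psi_n$ is monic of degree $r=\phi(n)/2$ for $n>2$, hence $\tilde\Psi_n$ has leading coefficient $2^{r}$. The inequality $r\geq 2$ for $n\geq 7$ is an elementary fact about Euler's function: for $n\geq 3$ the integer $\phi(n)$ is even, so $r=\phi(n)/2\leq 1$ would force $\phi(n)\leq 2$, which occurs only for $n\in\{1,2,3,4,6\}$; since all of these are smaller than $7$, we get $\phi(n)\geq 4$ and $r\geq 2$ whenever $n\geq 7$.

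The substantive point is the constant term $\tilde\Psi_n(0)$. First I would record the base values $\tilde\Psi_1(0)=-1$, $\tilde\Psi_2(0)=2$, $\tilde\Psi_3(0)=1$ from the explicit polynomials listed above, together with $T_m(0)=T_m(\cos(\pi/2))=\cos(m\pi/2)$, which vanishes for odd $m$ and equals $(-1)^{m/2}$ for even $m$. Evaluating the two cases of the $\tilde\Psi$-form of Theorem~\ref{WZ:thm} at $x=0$ then gives $\prod_{d\mid n}\tilde\Psi_d(0)=T_{s+1}(0)-T_s(0)=\pm1$ when $n=2s+1$ is odd (exactly one of the consecutive indices $s,s+1$ is even), and $\prod_{d\mid n}\tilde\Psi_d(0)=T_{s+1}(0)-T_{s-1}(0)$ when $n=2s$ is even, the latter equal to $\pm2$ for $n\equiv2\pmod 4$ and to $0$ for $4\mid n$. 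Then I would run a strong induction over the divisibility order. For odd $n\geq3$, every proper divisor of $n$ is odd and $<n$, so by induction each $\tilde\Psi_d(0)$ with $d\mid n$, $d<n$, is $\pm1$; since the product over all $d\mid n$ is $\pm1$, so is $\tilde\Psi_n(0)$. For $n\equiv2\pmod 4$, write $n=2s$ with $s$ odd: the divisors of $n$ are the numbers $d$ and $2d$ with $d\mid s$, the subproduct $\prod_{d\mid s}\tilde\Psi_d(0)$ equals $\pm1$ by the odd case, hence $\prod_{d\mid s}\tilde\Psi_{2d}(0)=\pm2$, and peeling off the factor $\tilde\Psi_2(0)=2$ while invoking the inductive hypothesis for the remaining $\tilde\Psi_{2d}(0)$ with $1<d\mid s$ yields $\tilde\Psi_n(0)=\pm1$. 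This handles the cases ($n$ odd and $n=2d$ with $d$ odd) relevant to the slap maps of odd regular polygons in Section~\ref{sec:regpoly}.

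The delicate step is this last piece of bookkeeping: unlike in the odd case, where the divisor product is itself a unit, the even identity produces a stray factor of $2$ (and a factor $0$ from $\tilde\Psi_4$ when $4\mid n$), so the induction must be organized along the divisor lattice so that the extra $2$ is always absorbed by the fixed contribution $\tilde\Psi_2(0)=2$; separating the argument according to $n \bmod 4$ makes this transparent. The leading-coefficient and degree-bound parts are routine by comparison.
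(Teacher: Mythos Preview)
Your argument is essentially the paper's: both evaluate the $\tilde\Psi$-product identities at $x=0$, compute $T_m(0)$ from the Chebyshev recursion, and run a strong induction over divisors to isolate $\tilde\Psi_n(0)$. You supply more detail---splitting off $n$ odd from $n\equiv 2\pmod 4$ and showing explicitly how $\tilde\Psi_2(0)=2$ absorbs the stray factor of $2$---where the paper just writes ``by induction''.

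Your restriction to $4\nmid n$ is not merely a convenience; it is forced. The claim $\tilde\Psi_n(0)=\pm1$ actually fails when $4\mid n$: for example $\tilde\Psi_8(x)=4x^2-2$ has constant term $-2$, and $\tilde\Psi_{12}(x)=4x^2-3$ has constant term $-3$. When $4\mid n$ the factor $\tilde\Psi_4(0)=0$ kills the product $\prod_{d\mid n}\tilde\Psi_d(0)$, so evaluating at $0$ yields no information about $\tilde\Psi_n(0)$, and no reorganization of the induction can rescue the statement. Since the only application (Proposition~\ref{slap map not pre-per}) uses $\tilde\Psi_{2d}$ with $d$ odd, i.e.\ $n\equiv 2\pmod 4$, your proof covers exactly what is needed; the corollary's hypothesis should really read $n\geq 5$ with $4\nmid n$.
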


\begin{proof}
By recursion, $T_n(0)=0$ if $n$ is odd, and $T_n(0)=\pm 1$\, if $n$ is even. Hence, if
$n=2s+1$, then $T_{s+1}(0)-T_{s}(0)=\pm 1$, if $n=2s$ with $s$ odd, then
$T_{s+1}(0)-T_{s-1}(0)=\pm 2$, while if $n=2s$ with $s$ even, then
$T_{s+1}(0)-T_{s-1}(0)=0$. Note now that $\tilde{\Psi}_2(0)=2$ and $\tilde{\Psi}_4(0)=0$. By
induction we can prove that $\tilde{\Psi}_n(0)=\pm 1$ for every $n\geq 5$. Finally, we have
$\phi(6)=2$, but using that $\phi(n)$ is multiplicative it follows that $2r=\phi(n)\geq 4$
for every $n\geq 7$.
\end{proof}

\section*{Acknowledgements}

The authors were supported by Funda\c c\~ao para a Ci\^encia e a Tecnologia through the Program POCI 2010 and the Project ``Randomness in Deterministic Dynamical Systems and Applications'' (PTDC-MAT-105448-2008). G.~Del Magno wish to express their thanks to M.~Lenci and R.~Markarian for stimulating conversations.

\bibliographystyle{plain}

\end{document}